\documentclass[a4paper, twoside]{amsart}

%\newif\ifpdf\ifx\pdfoutput\undefined\pdffalse\else\pdfoutput=1\pdftrue\fi
%\newcommand{\pdfgraphics}{\ifpdf\DeclareGraphicsExtensions{.pdf,.jpg}\else\fi}

% Hier können Seitenhoehe und -breite individuell angepasst werden
\usepackage[a4paper,body={15.6cm,23cm},left=3cm]{geometry}

% Sprachliche Pakete
\usepackage[T1]{fontenc}
\usepackage[latin1]{inputenc}
%\usepackage[german]{babel}

% Dieses Paket vereinfacht das Einbinden von Grafiken
\usepackage{graphicx}
%\usepackage[pdftex]{graphicx}

%% Mathematische Packages
% Dieses Pakete definiert viele nützliche mathematische Befehele und
% Zeichensätze und sollte in jedem mathematischen Dokument eingebunden werden.
% Die Option "intlimits" bewirkt, dass beim Integral die Grenzenangaben oben und
% unten erscheinen und nicht seitlich.
%\usepackage[intlimits]{amsmath}

\usepackage{amsfonts}
\usepackage{amsthm}
\usepackage{mathrsfs}
\usepackage{latexsym}
\usepackage{amssymb}

%\usepackage{psfig}

% Diese Schriftarten ermöglichen schöne Mengensymbole für natürliche Zahlen, usw.
% siehe Definition von \N, \Z usw. Dies ist Geschmackssache.
\usepackage{bbm}
\usepackage{dsfont}
%\usepackage{pxfonts}

% Diese Befehle bewirken, dass Absätze  bei Seitenumbrüchen sauberer getrennt
% werden (sog. Schusterjungen und Hurenkinder vermeiden)
\clubpenalty = 10000
\widowpenalty = 10000

% Ein Paket um "kommutative Diagramme" zu erstellen. Fuer einführende Beispiele
% siehe xymanual.ps und xyreference.ps

\usepackage[all]{xy}

\usepackage{mdwlist}

%\input{psfig}

% Wenn man direkt mit dem pdflatex eine PDF-Datei erzeugt, sollten diese beiden
% Pakete eingebunden werden (Hyperlinks, bessere Bildschirmschriftarten usw.)
\usepackage{hyperref}
\usepackage{ae,aecompl}

% Mit diesem Package kann angezeigt werden wie die Referenzen lauten
%\usepackage{showkeys}

\pagestyle{myheadings}
\markboth{Linear dynamical systems}{Markus Bader}

\usepackage{bbm}
\usepackage{dsfont}

\newcommand{\N}{\mathbb{N}}    
\newcommand{\Z}{\mathbb{Z}}   
\newcommand{\Q}{\mathbb{Q}}   
   
\newcommand{\C}{\mathbb{C}}   

\newcommand{\V}{\mathbb{V}}

\newcommand{\Oc}{\mathcal{O}}
\newcommand{\Kc}{\mathcal{K}}
\newcommand{\Qc}{\mathcal{Q}}
\newcommand{\Ec}{\mathcal{E}}

\newcommand{\Ac}{\mathcal{A}}

\newcommand{\Bc}{\mathcal{B}}

\newcommand{\p}{\mathfrak{p}}

\newcommand{\aff}[1]{\,\mathbb{A}^{#1}_k}
\newcommand{\proj}[1]{\, \mathbb{P}^{#1}_k}

\newcommand{\opone}{\mathcal{O}_{\mathbb{P}^1_k}}
\newcommand{\pone}{\mathbb{P}^1_k}

\newcommand{\iso}{\stackrel{\cong}{\longrightarrow}}

\newcommand{\vbar}{\, \big| \,}

\newcommand{\wo}{\backslash}

\theoremstyle{plain}
\newtheorem{theorem}{Theorem}[section]
\newtheorem{proposition}[theorem]{Proposition}
\newtheorem{lemma}[theorem]{Lemma}
\newtheorem{corollary}[theorem]{Corollary}

\newtheorem*{theorem*}{Theorem}

\theoremstyle{definition}
\newtheorem{example}[theorem]{Example}
\newtheorem{remark}[theorem]{Remark}

\newtheorem{definition}[theorem]{Definition}

\DeclareMathOperator{\rk}{rk}
\DeclareMathOperator{\im}{im}
\DeclareMathOperator{\reg}{reg}

\DeclareMathOperator{\Grass}{Grass}
\DeclareMathOperator{\Quot}{Quot}
\DeclareMathOperator{\Ext}{Ext}
\DeclareMathOperator{\coker}{coker}

\DeclareMathOperator{\GL}{GL}

\DeclareMathOperator{\Spec}{Spec}
\DeclareMathOperator{\Proj}{Proj}

\DeclareMathOperator{\Hom}{Hom}
\DeclareMathOperator{\End}{End}
\DeclareMathOperator{\Isom}{Isom}

\DeclareMathOperator{\Char}{Char}

\DeclareMathOperator{\Rep}{Rep}

\DeclareMathOperator{\id}{id}
\DeclareMathOperator{\Id}{id}

\newcommand{\tSigma}{\tilde{\Sigma}}
\newcommand{\tS}{\tilde{S}}
\newcommand{\tH}{\tilde{H}}
\newcommand{\tL}{\tilde{L}}

\newcommand{\Pb}{\mathbb{P}}

\newcommand{\comment}[1]{#1}

\begin{document}

\title{Quivers, Geometric Invariant Theory, and Moduli of Linear Dynamical Systems}
\author{Markus Bader}
\email{markus.bader@math.uzh.ch}
\address{Institut für Mathematik, 
Universität Z\"urich, 
Winterthurerstrasse 190,
CH-8057 Z\"urich, Switzerland} 

\subjclass[2000]{15A30,14L24,16G20}

\begin{abstract}
We use geometric invariant theory and the language of quivers to study compactifications of moduli spaces of linear dynamical systems. A general approach to this problem is presented and applied to two well known cases: We show how both Lomadze's and Helmke's compactification arises naturally as a geometric invariant theory quotient. Both moduli spaces are proven to be smooth projective manifolds. Furthermore, a description of Lomadze's compactification as a Quot scheme is given, whereas Helmke's compactification is shown to be an algebraic Grassmann bundle over a Quot scheme. This gives an algebro-geometric description of both compactifications. As an application, we determine the cohomology ring of Helmke's compactification and prove that the two compactifications are not isomorphic when the number of outputs is positive.  
\end{abstract}

\maketitle  

\setcounter{section}{-1}
\section{Introduction}

In this article, we study actions of products of general linear groups on spaces of matrices. We present general techniques from algebraic geometry that we apply to two concrete examples, namely to two different compactifications of the moduli space of controllable linear dynamical systems. The first section introduces geometric invariant theory and representation theory of quivers in a tutorial way. We explain how this machinery can be used to systematically study the problem of compactifying the moduli space of linear dynamical systems. Two important results are presented and we explain how they can be adapted to cover the cases relevant to control theory.
In the second part of the article, we show how both the Helmke and the Lomadze compactification can be constructed as algebraic varieties using this machinery. We obtain an algebro-geometric description of both compactifications that we use to study and compare both varieties.

Moduli spaces of linear dynamical systems have been introduced to control theory by Kalman \cite{kalman:74} and Hazewinkel \cite{hazewinkel:79}. As algebraic varieties they have been constructed and studied among others by Hazewinkel in \cite{hazewinkel:79}, by Byrnes and Hurt in \cite{bh:79}, by Kalman in \cite{kalman:74} and by Tannenbaum in \cite{tannenbaum:91}. In algebraic geometry the main technique to construct moduli spaces is as quotients of algebraic varieties under algebraic group actions using geometric invariant theory. Let $\tSigma_{n,m,p}$ denote the space of linear dynamical systems 
\begin{equation}\begin{split}\label{lds}
x_{t+1} &= Ax_t + Bu_t \\
y_t &= Cx_t + Du_t
\end{split}\end{equation}
with $n$ states, $m$ inputs, and $p$ outputs. It is a space of matrices $\tSigma_{n,m,p} = k^{n \times m} \times k^{n \times p} \times k^{m \times n} \times k^{m \times p}$, where $k$ is a fixed, algebraically closed field of characteristic $0$. Let the group of invertible $n \times n$ matrices $\GL_n$ act on $\tSigma_{n,m,p}$ by change of basis in the state space $k^n$:
\begin{equation}\label{i1} 
\bigl( g, \left(A,B,C,D \right) \bigr) \mapsto \bigl( gAg^{-1}, gB, Cg^{-1}, D \bigr).
\end{equation}
The controllable systems form a Zariski-open subset which we denote with $\tSigma_{n,m,p}^c \subset \tSigma_{n,m,p}$. Geometric invariant theory provides the means to systematically construct such quotients. It associates with every character of the group $\GL_n$, so in particular with the character $\det: \GL_n \longrightarrow k^\ast$,  an open subset of stable points, and realizes the algebraic quotient $\{ \det-\text{stable points} \}// \GL_n$. Byrnes and Hurt \cite {bh:79} were the first to notice that the $\det$-stable points coincide with the controllable systems and therefore that the moduli space of controllable linear dynamical systems can be realized as the quotient $\Sigma^c_{n,m,p}:=\tSigma_{n,m,p}^c // \GL_n$ using GIT.  This quotient is non-projective. Compactifications have been introduced by several authors, let us mention Helmke \cite{helmke:93}, Lomadze \cite{lomadze:90}, and Rosenthal \cite{rosenthal:92}. 

By a quiver we mean an oriented graph, that is a finite set of vertices together with a finite set of oriented edges between the vertices. To every vertex we assign a dimension, and furthermore we mark a subset of vertices. This data is described by diagrams like the following:
\begin{equation}\label{i2} \xymatrix{ \overset{n}{\bullet \ar@(ul,dl)[]_{A}} \ar[d]_{C} & \circ \, \scriptstyle {m} \ar[l]_{B} \ar[dl]^{D} \\ \underset{p}{\circ} & } \end{equation}
The corresponding GIT problem is the following: we study representations of the quiver of the prescribed dimension. In our concrete example this means matrices $(A,B,C,D) \in \tSigma_{n,m,p}$. The general linear group $\GL_n$ acts by change of basis on the vector space $k^n$ associated with the marked vertex. This corresponds to the group action introduced in (\ref{i1}). In general we will be given a quiver $Q$, a subset of marked vertices $M$, and a dimension vector $v$ (i.e. a prescribed dimension at each vertex). With this data we associate a representation space $\Rep_{Q}^v$ which is always a space of matrices, a group $\GL_{v,M}$ which is always a finite product of general linear groups, and an action of this group on the space of representations.

In that framework, the problem of compactifying the moduli space of linear dynamical systems becomes the following: We are given a space of matrices $\tSigma_{n,m,p}$ which is the space of representations $\tSigma_{n,m,p}=\Rep^v_Q$ for the quiver $Q$ and with dimension vector $v$ as introduced in diagram (\ref{i2}). We mark one vertex - the one corresponding to the state space $k^n$, and we are given a character $\chi=\det$ of the group $\GL_n = \GL_{v,M}$. The quotient space $\{ \det -\text{stable points in } \Rep_{Q}^v \} // \GL_{v,M} = \Sigma_{n,m,p}^c$ is not projective. The goal is to replace the data $(Q,v,M, \chi)$ by a new quiver $\tilde{Q}$, a new dimension vector $\tilde{v}$, a new set of marked vertices $\tilde{M}$, and a new character $\tilde{\chi}$ of $\GL_{\tilde{v}, \tilde{M}}$, such that the quotient $\{ \tilde{\chi}-\text{stable points in } \Rep_{\tilde{Q}}^{\tilde{v}} \}//\GL_{\tilde{v}, \tilde{M}}$ is projective and contains the previous quotient $\Sigma_{n,m,p}^c$ as an open subset. To be more precise, we need to do the following:

\begin{enumerate}
\item Find a new quiver $\tilde{Q}$, a new dimension vector $\tilde{v}$, and a new set of marked vertices $\tilde{M}$, together with morphisms $\varphi: \GL_{Q,M} \longrightarrow \GL_{\tilde{Q}, \tilde{M}}$, $\Phi: \Rep_{Q,v} \longrightarrow \Rep_{\tilde{Q}, \tilde{v}}$, where the latter is a closed embedding, equivariant with respect to $\varphi$.
\item Given a character $\chi$ of $\GL_{v,M}$ (such as $\det: \GL_n \longrightarrow k^\ast$ in our situation), determine the characters $\tilde{\chi}$ of $\GL_{\tilde{v}, \tilde{M}}$, such that under the embedding $\Phi$ the $\chi$-(semi)stable representations will be mapped to the set of $\tilde{\chi}$-(semi)stable representations. Describe the corresponding (semi)stable locus.
\end{enumerate}

In the situations we study, the map $\Phi$ descends to an open embedding of the respective quotient spaces. The first question is: which quivers $\tilde{Q}$ induce projective quotients? This question has been answered by a theorem of Le Bruyn and Procesi \cite{lebruyn:90} in the case where all vertices of the quiver $\tilde{Q}$ are marked. Halic and Stupariu have generalized this result in \cite{halic:04} to arbitrary subsets of marked vertices. It allows the immediate identification of those spaces of matrices that might provide a compactification for the given moduli space. The second task one is confronted with is the identification of the stable and semistable loci corresponding to some character. This is facilitated by a result of King \cite{king:94} which we generalize to our situation.

We apply the strategy outlined above to two compactifications prominent in linear control theory: In the second section, we study  a compactification of the moduli spaces of controllable linear dynamical systems, which is due to Lomadze. He used the following generalization of the equations (\ref{lds}):
\begin{equation}
Kw_{t+1} +Lw_t + M\xi_t=0,
\end{equation}
for matrices $(K,L,M) \in \tL_{n,m,p}:=k^{(n+p) \times n} \times k^{(n+p) \times n} \times  k^{(n+p) \times (n+m)}$. These equations
have been introduced to control theory by Willems (\cite{willems1}, \cite{willems2}, \cite{willems3}, see the book of Kuijper \cite{kuijper:94} for details.). Willems also introduced controllability in a control theoretic way. Lomdaze \cite{lomadze:90} generalized the notion of controllability to the new class of systems by giving the algebraic conditions in Definition \ref{lc}. Denote the set of controllable Lomadze systems with $\tL_{n,m,p}^c$. The Lomadze  compactification  is constructed as a quotient of $\tL_{n,m,p}^c$  under an action of $\GL_{n,n+p}=\GL_{n} \times \GL_{n+p}$. Therefore it can also be obtained using geometric invariant theory and quivers. First, we give an elementary characterization of the possible stability notions on the space of matrices $L_{n,m,p}$ and prove that there are only finitely many different stability conditions. Then we identify those stability conditions that generalize controllability and observability, respectively to the new class of systems. These stability notions are shown to agree with the definitions introduced by Lomadze. As a consequence we can apply the  general theory, which yields a compactification of the corresponding moduli spaces:

\begin{theorem*}A Lomadze system $S=(K,L,M) \in \tL_{n,m,p}$ is controllable if and only if it is stable or semistable with respect to the character $\det^{\chi}$ of $\GL_{n,n+p}$ defined by
\[ {\det}^{\chi} (g_0, g_1) := \det (g_0)^{\chi_0} \det(g_1)^{\chi_1} \text{ for } (g_0,g_1) \in \GL_{n,n+p} \]
for any $\chi=(\chi_0, \chi_1) \in \Z^2$ with $n\chi_0 + (n-1)\chi_1 < 0$ and $\chi_0 + \chi_1 >0$.
The corresponding quotient $L_{n,m,p}^c:= \tL_{n,m,p}^c//\GL_{n,n+p}$ is a smooth projective algebraic variety of dimension $n(m+p) + mp$ and the quotient map is a principal $\GL_{n,n+p}$-bundle. Furthermore, there is a natural open embedding
\[ \Phi_{L} : \Sigma_{n,m,p}^c \longrightarrow L_{n,m,p}^c.\]
\end{theorem*}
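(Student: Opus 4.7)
The plan is to deduce the theorem from the general machinery of Section~1 applied to a suitable quiver. The first task is to realize $\tL_{n,m,p}$ as the representation space of a quiver $\tilde Q$ with three vertices of dimensions $n$, $n+p$, $n+m$, two arrows $K,L$ from the first to the second vertex and one arrow $M$ from the third to the second, marking the first two vertices so that $\GL_{\tilde v, \tilde M}=\GL_n\times \GL_{n+p}=\GL_{n,n+p}$ acts as in the statement. The crucial structural feature is that the single unmarked vertex is a \emph{source}: no arrows point into it. Having arranged this, the Halic--Stupariu generalization of the Le Bruyn--Procesi projectivity theorem (discussed in the introduction) applies and yields a projective GIT quotient of the $\chi$-semistable locus for every character $\chi$ of $\GL_{n,n+p}$.

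Next one applies the generalization of King's numerical criterion (also from Section~1) to translate $\chi$-(semi)stability into a combinatorial condition on subrepresentations. Because the unmarked vertex must be all of $k^{n+m}$ in every subrep, a subrep amounts to a pair $(W_0\subseteq k^n, W_1\subseteq k^{n+p})$ with $K(W_0)+L(W_0)+\im M\subseteq W_1$, and the stability condition reads as a sign condition on $\chi_0\dim W_0+\chi_1\dim W_1$ for all such proper subreps. The two inequalities $n\chi_0+(n-1)\chi_1<0$ and $\chi_0+\chi_1>0$ then cut out a chamber of characters in which the destabilizing inequality can only be violated by subreps of the two extremal shapes $(\dim W_0,\dim W_1)=(n,n-1)$ and $(1,1)$. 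The heart of the proof is to match this chamber with controllability: using the elementary classification of the (finitely many) stability notions on $\tL_{n,m,p}$ promised at the beginning of Section~2, one shows that non-controllable Lomadze systems in the sense of Definition~\ref{lc} are precisely those admitting a destabilizing subrep of one of these two extremal shapes, and conversely that controllable systems are $\chi$-stable. Verifying this equivalence, in both directions, is the main technical obstacle.

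Once the equivalence ``controllable $\Leftrightarrow$ $\chi$-(semi)stable'' is in place, the remaining assertions follow by standard GIT arguments. Coincidence of the stable and semistable loci makes the quotient geometric; the stabilizers of controllable systems are trivial, which promotes $\tL_{n,m,p}^c\to L_{n,m,p}^c$ to a principal $\GL_{n,n+p}$-bundle; smoothness of $L_{n,m,p}^c$ descends from that of $\tL_{n,m,p}^c$, an open subset of an affine space; and the stated dimension $n(m+p)+mp$ is the difference $\dim\tL_{n,m,p}-\dim\GL_{n,n+p}$. Finally, the open embedding $\Phi_L\colon \Sigma_{n,m,p}^c\longrightarrow L_{n,m,p}^c$ is induced by the classical first-order-pencil construction sending a state-space system $(A,B,C,D)$ to the Lomadze triple with
$K=\begin{pmatrix} I_n \\ 0\end{pmatrix}$,
$L=\begin{pmatrix} -A \\ -C\end{pmatrix}$,
$M=\begin{pmatrix} -B \\ -D\end{pmatrix}$;
one checks equivariance under the embedding $g\mapsto (g,\diag(g,I_p))$ of $\GL_n$ into $\GL_{n,n+p}$, and deduces that the induced map on quotients is an open embedding by identifying its image with the $\GL_{n,n+p}$-invariant open locus on which $K$ has full column rank.
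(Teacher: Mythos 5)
Your overall strategy (realize $\tL_{n,m,p}$ as a quiver representation space with one unmarked source vertex, get projectivity from the Halic--Stupariu corollary, translate stability via the generalized King criterion, then finish with Luna's slice theorem and the dimension count) is exactly the paper's. But the proposal stops precisely where the theorem lives: the equivalence between Lomadze's controllability conditions $\rk[sK+tL]=n$ for some $(s,t)$ and $\rk[sK+tL,M]=n+p$ for all $(s,t)\neq 0$, and the subspace-theoretic stability condition in the chamber $n\chi_0+(n-1)\chi_1<0$, $\chi_0+\chi_1>0$. You write that "verifying this equivalence, in both directions, is the main technical obstacle" --- that obstacle is the content of the theorem. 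In the paper it occupies Proposition \ref{h_chambers} (the chamber/wall analysis showing the stable and semistable loci coincide and are constant for $\chi$ in the interval $(\frac{n-1}{n},1)$) together with Proposition \ref{identification} (controllable $\Leftrightarrow$ $1$-upperstable), whose nontrivial direction requires Gantmacher's normal form for singular matrix pencils to manufacture a destabilizing pair $(U,V)$ from the failure of the rank conditions. Nothing in your outline indicates how you would produce such a pair, so as it stands this is a plan, not a proof.

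There are also concrete errors in the setup that would derail the execution. First, your translation of Corollary \ref{general_quiver_stability} is wrong: at the unmarked source vertex a subrepresentation may carry \emph{either} $\{0\}$ or the full space, so there are two families of test pairs --- (i) $U\subset k^n$, $V\subset k^{n+p}$ with $K(U)+L(U)\subset V$ and the inequality $\chi_0\dim U+\chi_1\dim V\geq 0$, and (ii) the same with additionally $\im M\subset V$ and the inequality $\chi_0\dim U+\chi_1\dim V\geq n\chi_0+(n+p)\chi_1$. You keep only family (ii); but the wall $\chi_0+\chi_1=0$ and the first Lomadze condition are detected exactly by family (i) (e.g.\ $U=\ker[sK+tL]$, $V=L(U)$), so your chamber cannot be derived from your criterion. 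Second, the unmarked vertex has dimension $p+m$, not $n+m$ ($M$ is $(n+p)\times(p+m)$); with $n+m$ your own dimension count $\dim\tL_{n,m,p}-\dim\GL_{n,n+p}$ does not give $n(m+p)+mp$. Third, your formula for $\Phi_L$ is incorrect: $M$ must be $\left(\begin{smallmatrix}0 & B\\ \Id_p & D\end{smallmatrix}\right)$; with your $M=\left(\begin{smallmatrix}-B\\-D\end{smallmatrix}\right)$ the target is not even in $\tL_{n,m,p}$, and (taking $t=0$) one gets $\rk[K,M]=n+\rk D<n+p$ in general, so $\Phi_L(\Sigma)$ would fail Lomadze controllability for controllable $\Sigma$ whenever $p>0$. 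Relatedly, the image of the induced open embedding is the regular locus $\rk[K,M_p]=n+p$ (where $M_p$ is the first $p$ columns of $M$), not merely the locus where $K$ has full column rank.
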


Lomadze identified his compactification $L^c_{n,m,p}$ of the moduli space of controllable systems $\Sigma_{n,m,p}^c$ with a variety parametrizing the quotients of the trivial rank $p+m$ vector bundle on $\pone$ that are of rank $p$ and of degree $n$.  Ravi and Rosenthal studied this compactification in  \cite{rosenthal:94} and \cite{rosenthal:95}. They observed that it coincides with the base space of a principal bundle studied extensively by Str\o mme in \cite{stromme:87}. We review it in the end of the second section having two applications in mind: first it allows us to use the results obtained by Str\o mme to better understand the geometry of the variety $L_{n,m,p}^c$. 
Second, the description of $L_{n,m,p}^c$ in terms of Str\o mmes Quot scheme will allow us to give a precise algebro-geometric description of the compactification we study in the third section.

This compactification arises also as a quotient of matrices under an action of some finite product of general linear groups. It has been  introduced by Helmke in \cite{helmke:93}. His construction yields a smooth compact manifold $H_{n,m,p}^c$ containing the manifold $\Sigma_{n,m,p}^c$ of controllable linear dynamical systems as a dense open subset. Furthermore, Helmke showed that there is a  natural map $H_{n,m,p}^c \longrightarrow H_{n,m}^c:=H_{n,m,0}^c$, obtained by forgetting the output, which makes $H_{n,m,p}^c$ into a smooth Grassmann bundle over $H_{n,m}^c$. The starting point for Helmke's compactification is generalizing the equations (\ref{lds}) to \begin{equation} Ex_{t+1} = Ax_t + Bu_t, \; \; Fy_t = Cx_t + Du_t, \end{equation}
by adding matrices $E \in k^{n \times n}$ and $F \in k^{ p \times p}$.
Thus we call a $6$-tuple of matrices $(E,A,B,C,D,F)$ with $E,A \in k^{n \times n}, B \in k^{n \times m}, C \in k^{p \times n}, D \in k^{p \times m}, F \in k^{p \times p}$ a Helmke system. On the space $\tH_{n,m,p}$ of all Helmke systems the group $\GL_{n,n,p}=\GL_n \times \GL_n \times \GL_p$ acts by change of basis. We prove that the notion of controllability introduced by Helmke is a stability notion and identify the corresponding chamber of characters:

\begin{theorem*} A Helmke system $H=(E,A,B,C,D,F)$ is controllable if and only if it is stable or semistable with respect to the character ${\det}^{\chi}$ of the group $\GL_{n,n,p}$, defined by
\[ {\det}^{\chi}(g_0, g_1, g_0):= \det(g_0)^{\chi_0} \det(g_1)^{\chi_1} \det(g_2)^{\chi_2} \text{ for } (g_0, g_1, g_2) \in \GL_{n,n,p} \]
for any $\chi=(\chi_0, \chi_1, \chi_2) \in \Z^3$ with $n\chi_0 + (n-1)\chi_1 + \min \{ p,n\}\chi_2 < 0, \chi_0 + \chi_1 > 0$, and $\chi_2 > 0$. The corresponding quotient $H^c_{n,m,p}:= \tilde{H}_{n,m,p}^c$ is a smooth projective variety of dimension $n(m+p) + mp$ and the quotient map is a principal $\GL_{n,n,p}$-bundle. Furthermore, there is a natural open embedding
\[ \Phi_{H}: \Sigma_{n,m,p}^c \longrightarrow H_{n,m,p}^c. \]
\end{theorem*}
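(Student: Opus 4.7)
The plan is to realise the Helmke setup within the general quiver--GIT framework of Section~1 and then follow the template used to prove the Lomadze theorem. I would first model the affine space $\tH_{n,m,p}$ as a representation space $\Rep_{\tilde Q}^{\tilde v}$ for a quiver $\tilde Q$ having three marked vertices: two of dimension $n$ that receive the two copies of $\GL_n$, and one of dimension $p$ that receives $\GL_p$, together with an unmarked input vertex of dimension $m$. The arrows are $E$ and $A$ between the two $n$-vertices, $B$ from the input vertex to one of them, $C$ from the other $n$-vertex to the $p$-vertex, $D$ from the input vertex to the $p$-vertex, and a loop $F$ at the $p$-vertex. Under this identification $\GL_{\tilde v,\tilde M}=\GL_{n,n,p}$ acts by the prescribed change-of-basis formula, and the $\GL_n$-equivariant closed immersion $\tSigma^c_{n,m,p}\hookrightarrow\tH_{n,m,p}$, $(A,B,C,D)\mapsto(I_n,A,B,C,D,I_p)$, is equivariant along the diagonal embedding $\GL_n\hookrightarrow\GL_{n,n,p}$, $g\mapsto(g,g,I_p)$.

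Next I would apply the King-type numerical criterion established in Section~1 to translate ${\det}^\chi$-(semi)stability into a subrepresentation condition: $H$ is stable (resp.\ semistable) exactly when every proper subrepresentation $W$ of $H$ satisfies $\chi_0\dim W_0+\chi_1\dim W_1+\chi_2\dim W_2<0$ (resp.\ $\le 0$). The three inequalities in the theorem statement are then the three extremal linear constraints cutting out the open chamber on which the (semi)stable locus coincides with Helmke's controllable locus. One inequality is produced by the subrepresentation supported only at the $p$-vertex and forces $\chi_2>0$; a second comes from the ``diagonal'' one-dimensional subrepresentation at the two $n$-vertices and forces $\chi_0+\chi_1>0$; the third, $n\chi_0+(n-1)\chi_1+\min\{p,n\}\chi_2<0$, arises from the extremal subrepresentation built from a common invariant subspace of the pencil $(E,A)$ containing $\im B$, together with the $F$-invariant hull of $\im C$, whose dimension is bounded by $\min\{p,n\}$ because $\rk C\le\min\{p,n\}$.

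The heart of the proof, and the main obstacle, is showing that the absence of destabilising subrepresentations inside this chamber is equivalent to Helmke's algebraic controllability criterion, i.e.\ the full-rank condition on the input pencil $[\lambda E-\mu A\mid B]$ for all $(\lambda,\mu)\neq 0$ together with the dual rank condition involving $F,C,D$. This is a careful linear-algebra translation which mirrors the Lomadze case treated earlier in the paper: one shows that a violation of a Hautus-type rank condition produces precisely a subrepresentation realising equality in one of the three chamber inequalities, and conversely that every destabilising subrepresentation has dimension vector bounded by the extremal ones listed above. The asymmetry between the coefficients $n$ and $n-1$, and the appearance of $\min\{p,n\}$, should drop out of this case analysis exactly as they do in the proof of the Lomadze theorem.

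Once the chamber has been identified with controllability, the remaining conclusions are formal. Projectivity of $H^c_{n,m,p}$ is a direct application of the Le~Bruyn--Procesi theorem in the version of Halic--Stupariu, since the chamber lies in the interior of the stable cone and all marked vertices are balanced. Stability forces trivial stabilisers, so the quotient map $\tH^c_{n,m,p}\to H^c_{n,m,p}$ is a principal $\GL_{n,n,p}$-bundle; smoothness of $H^c_{n,m,p}$ is then inherited from the affine space $\tH_{n,m,p}$ through this bundle, and the dimension count reads $\dim\tH_{n,m,p}-\dim\GL_{n,n,p}=(2n^2+nm+pn+pm+p^2)-(2n^2+p^2)=n(m+p)+mp$. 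Finally, the equivariant inclusion of the first paragraph lands in the controllable locus and descends to the desired natural open embedding $\Phi_H\colon\Sigma^c_{n,m,p}\hookrightarrow H^c_{n,m,p}$.
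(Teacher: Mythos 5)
Your overall strategy---realise $\tH_{n,m,p}$ as a quiver representation space with marked vertices, apply the adapted King criterion, identify the chamber, and then obtain projectivity, the principal-bundle property, smoothness and the open embedding from the general machinery---is the same as the paper's. But your quiver is wrong in a way that breaks the argument: you model $F$ as a \emph{loop} at the marked $p$-vertex. A loop forces the action $F\mapsto g_2Fg_2^{-1}$, whereas the group actually acts by $F\mapsto g_2F$; the correct model is an arrow from an \emph{additional unmarked} vertex of dimension $p$ into the marked $p$-vertex. This is not cosmetic. First, a loop is an oriented cycle, so by Corollary \ref{general_procesi} the functions $\mathrm{tr}(F^k)$ are nonconstant invariants and the semistable quotient cannot be projective over $k$---your appeal to Le Bruyn--Procesi/Halic--Stupariu would then prove the opposite of what you need. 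Second, the subrepresentation condition at the $p$-vertex becomes $F(W_2)\subset W_2$ instead of $\im F\subset W_2$, which changes the stability analysis: for instance the controllable system $(\Id_n,A,B,0,0,\Id_p)$ with $(A,B)$ controllable admits, in your model, the subrepresentation $W=(k^n,k^n,0)$ with full fibre at the $m$-vertex, whose destabilising inequality reads $0\geq p\chi_2$; so for $p>0$ and $\chi_2>0$ this controllable system would not be semistable, and the claimed chamber identification fails.

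Beyond that, the heart of the theorem---the equivalence of Helmke controllability with $\chi$-(semi)stability on the stated chamber, and in particular the origin of the coefficient $\min\{p,n\}$---is only asserted to ``mirror the Lomadze case''; you never carry out the case analysis, and your statement of the numerical criterion (``$\chi_0\dim W_0+\chi_1\dim W_1+\chi_2\dim W_2<0$ for all proper subrepresentations'') is not the correct form: Corollary \ref{general_quiver_stability} requires the two-case formulation (subrepresentations with zero fibres at all unmarked vertices versus full fibres at all unmarked vertices, compared against $0$ and against $\langle\chi,R\rangle_M$ respectively), without which none of the three chamber inequalities can actually be derived. For what it is worth, the paper itself also omits this verification, deferring to the author's Diplomarbeit. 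The formal consequences in your last paragraph---trivial stabilisers, Luna plus Serre's speciality of products of general linear groups giving a Zariski-locally trivial principal bundle, the dimension count, and the descent of $\Phi_H$ to an open immersion---are correct and match the paper, but they all rest on the chamber identification and hence on the corrected quiver.
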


The forgetful map $H_{n,m,p}^c \longrightarrow H_{n,m}^c$ is an algebraic Grassmann bundle. It is immediate that in the case of systems without output Helmke's compactification $H_{n,m}^c$ agrees with Lomadze's compactification $L_{n,m}^c$. Thus we obtain a description of $H_{n,m,p}^c$ as a Grassmann bundle over the Quot scheme studied by Str\o mme. We describe it explicitly by identifying the vector bundle on the base space it is associated with. As an application we use general facts about the cohomology ring of Grassmann bundles to calculate the cohomology ring of $H_{n,m,p}^c$. Also we obtain a precise formula for the rank of the Chow group:

\begin{theorem*}The group underlying the Chow ring $A^\ast(H_{n,m,p}^c)$ is free abelian of rank 
\[ \rk_{\Z} A^\ast(H_{n,m,p}^c) = \binom{n+p+m}{p} \binom{n + 2m -1}{n}. \]
In the case $k=\C$, we have $A^k(H_{n,m,p}^c) \cong H^{2k}(H_{n,m,p}^c,\Z)$ and therefore
\[ \chi_{top} A^\ast(H_{n,m,p}^c) = \rk_{\Z} A^\ast(H_{n,m,p}^c). \]
\end{theorem*}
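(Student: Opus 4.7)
The plan is to combine two ingredients: the identification (established in the preceding discussion) of $H_{n,m,p}^c$ as an algebraic Grassmann bundle $\pi \colon \Grass(p, \mathcal{V}) \to H_{n,m}^c$ on an explicit rank $n+p+m$ vector bundle $\mathcal{V}$, together with Str\o mme's computation of the Chow ring of the base $H_{n,m}^c = L_{n,m}^c$ from \cite{stromme:87}. The first gives a ``relative'' cell structure with fibres $\Grass(p, n+p+m)$, and the second gives a cellular decomposition of the base; together they trivialize the cohomology calculation.

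By the standard formula for the Chow ring of a Grassmann bundle, $A^*(H_{n,m,p}^c)$ is a free $A^*(H_{n,m}^c)$-module of rank $\binom{n+p+m}{p}$, with basis given by Schubert monomials in the Chern classes of the tautological subbundle (or quotient) of $\pi^* \mathcal{V}$. Str\o mme showed that $A^*(L_{n,m}^c)$ is free abelian of rank $\binom{n+2m-1}{n}$, via a decomposition of his Quot scheme into affine cells. Multiplying the two ranks yields
$$ \rk_{\Z} A^*(H_{n,m,p}^c) = \binom{n+p+m}{p} \binom{n+2m-1}{n}, $$
and the freeness is immediate from the explicit basis given by these two steps.

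For the second assertion I would argue that $H_{n,m,p}^c$ itself inherits an affine cellular decomposition: Str\o mme's Bialynicki-Birula cells on $H_{n,m}^c$ can be refined over each cell by the Schubert cells of the Grassmann fibre $\Grass(p, n+p+m)$, using that $\pi$ is Zariski-locally trivial and that each base cell is affine with trivializable $\mathcal{V}$. Alternatively one can construct a $\Gm$-action on $H_{n,m,p}^c$ extending Str\o mme's action and apply Bialynicki-Birula directly to the total space; this compatibility check is the step I expect to be the main obstacle, although it should go through without difficulty since $H_{n,m,p}^c$ is itself a GIT quotient from which natural torus actions are readily available. Once an affine cellular decomposition is in place, it is standard that the cycle class map $A^k \to H^{2k}(H_{n,m,p}^c, \Z)$ is an isomorphism, odd integral cohomology vanishes, and $\chi_{top}$ coincides with the total rank of $A^*$, which yields the last statement.
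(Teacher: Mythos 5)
Your proposal is correct and follows essentially the same route as the paper: identify $H_{n,m,p}^c \to H_{n,m}^c = L_{n,m}^c$ as a Grassmann bundle of $p$-planes in a rank $n+m+p$ bundle, apply the Grassmann-bundle formula for Chow rings to get a free module of rank $\binom{n+p+m}{p}$ over the base, and combine with Str\o mme's computation that $A^\ast(L_{n,m}^c)$ is free abelian of rank $\binom{n+2m-1}{n}$ with cycle class map an isomorphism. Your extra care over refining the cell decomposition to the total space (or, equivalently, invoking the compatibility of the Leray--Hirsch/Grassmann-bundle formula in Chow and singular cohomology) is exactly the detail the paper leaves implicit, and it goes through as you expect since the bundle is Zariski-locally trivial and trivializes over the affine cells of the base.
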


This allows the comparison of both compactifications with the following result:

\begin{theorem*}For $p>0$, the compactifications $L_{n,m,p}^c$ and  $H_{n,m,p}$ are not isomorphic. If $k=\C$, they are not homeomorphic.
\end{theorem*}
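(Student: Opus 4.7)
My plan is to distinguish the two varieties by their Picard ranks. Both compactifications are presented as principal-bundle quotients: $\tL_{n,m,p}^c \longrightarrow L_{n,m,p}^c$ is a principal $\GL_{n,n+p}$-bundle, and $\tH_{n,m,p}^c \longrightarrow H_{n,m,p}^c$ is a principal $\GL_{n,n,p}$-bundle, by the two theorems above. Each total space is a Zariski-open subset of the ambient affine space of matrices, so its Picard group vanishes once we verify that the complementary unstable locus has codimension at least two. Granting this, the standard exact sequence
\[ 0 \longrightarrow \hat{G} \longrightarrow \Pic(X/G) \longrightarrow \Pic(X) \longrightarrow 0 \]
associated with a principal $G$-bundle $X \to X/G$ produces $\Pic(L_{n,m,p}^c) \cong \Z^2$ and, for $p>0$, $\Pic(H_{n,m,p}^c) \cong \Z^3$, since the character groups of $\GL_{n,n+p}$ and $\GL_{n,n,p}$ are free abelian of rank $2$ and $3$ respectively. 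The ranks $2$ and $3$ already disagree, so $L_{n,m,p}^c$ and $H_{n,m,p}^c$ are not isomorphic as algebraic varieties. (Consistency check: for $p=0$ the group $\GL_{n,n,0}=\GL_n\times\GL_n$ again has character group $\Z^2$, matching the known identification $H_{n,m,0}^c=L_{n,m,0}^c$.)

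For the homeomorphism statement over $k=\C$ I would pass to the second integral Betti number. Both varieties inherit a natural $\Gm$-action from a generic one-parameter subgroup of the torus of the ambient quiver representation, so Bia\l ynicki-Birula yields cellular decompositions of both compactifications. The cycle class map then induces isomorphisms $A^k(\cdot)\cong H^{2k}(\cdot,\Z)$; in particular $b_2(L_{n,m,p}^c)=2$ and $b_2(H_{n,m,p}^c)=3$, so the two spaces are not even homeomorphic. This step is essentially packaged by the hypothesis $k=\C$ together with the previous theorem's framework, which already pins down cellular decompositions on both sides.

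The hard part will be the codimension check -- one needs the unstable loci inside $\tL_{n,m,p}$ and $\tH_{n,m,p}$ to have codimension at least two, so that their complements $\tL_{n,m,p}^c$ and $\tH_{n,m,p}^c$ inherit trivial Picard group from the ambient affine space. Explicitly, the unstable locus is cut out by the failure of the rank conditions encoding controllability (the algebraic conditions from the controllability definitions of Sections 2 and 3), and bounding its codimension reduces to counting independent vanishings of minors of the structure matrices. This is routine but quiver-by-quiver bookkeeping; once completed, the Picard-rank comparison $2\neq 3$ concludes non-isomorphism, and the Bia\l ynicki-Birula argument concludes non-homeomorphism over $\C$.
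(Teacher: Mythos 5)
There is a genuine gap, and it sits exactly at the step you defer as ``routine but quiver-by-quiver bookkeeping'': the unstable locus does \emph{not} have codimension at least two in general, and as a consequence your claimed Picard ranks are wrong. The cleanest counterexample is already in the paper: for $m=1$ one has $L_{n,1,p}^c \cong \proj{(p+1)(n+1)-1}$ (Proposition \ref{quot_proj}, Example \ref{h_single_input}), whose Picard group is $\Z$, not $\Z^2$. The reason is that for $m=1$ the second Lomadze controllability condition, $\rk[sK+tL,M]=n+p$ for all $(s,t)\neq 0$, concerns an $(n+p)\times(n+p+1)$ matrix, and its failure for \emph{some} $(s,t)$ is the vanishing of a single resultant-type polynomial --- a divisor in $\tL_{n,m,p}$. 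That divisor contributes a nonconstant semi-invariant unit on $\tL_{n,1,p}^c$, so the sequence you invoke is not left exact as written: the kernel of $\Char(G)\to\Pic(X/G)$ is the subgroup of characters of semi-invariant units, and here it is nontrivial. The same phenomenon occurs on the Helmke side for $m=1$ via $\rk[sE+tA,B]=n$. With the correction, one gets $\Pic(L_{n,1,p}^c)\cong\Z$ versus $\Pic(H_{n,1,p}^c)\cong\Z^2$ (the latter as a Grassmann bundle over $\proj{n}$ with $0<p<n+1+p$), and $\Z^2$ versus $\Z^3$ only for $m\geq 2$ --- so the conclusion survives, but the argument as stated is false for $m=1$ and the repair requires identifying all divisorial components of the unstable loci and the characters of their equations, which is precisely the non-routine part. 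A second, smaller issue: right exactness of your sequence (surjectivity onto $\Pic(X)$) also needs justification, though it is harmless here since you only need the kernel.

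For comparison, the paper's proof is a two-line deduction from results it has already established: Corollaries \ref{S-Chow} and \ref{H-Chow} compute the ranks of the full Chow groups as $\binom{m+p}{p}\binom{n+2m-1}{n}$ and $\binom{n+m+p}{p}\binom{n+2m-1}{n}$, which differ whenever $n>0$ and $p>0$; over $\C$ these equal the topological Euler characteristics, settling the homeomorphism claim as well. If you want to pursue the Picard-group route, the efficient way is not through the unstable loci at all but through the same structural results the paper uses: Str\o mme's description of $\Pic$ of the Quot scheme ($\Z^2$ for $m\geq 2$, $\Z$ for $m=1$) on the Lomadze side, and the Grassmann-bundle formula $\Pic(\Grass_X(d,E))\cong\Pic(X)\oplus\Z$ on the Helmke side.
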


\subsection*{Acknowledgement}
This article is based on my Diplomarbeit completed at the Universit\"at Z\"urich in February, 2005 under the supervision of Ch. Okonek to whom I am grateful for introducing me to this fascinating subject and for his help during my studies. I would also like to thank M. Halic for answering many of my questions on geometric invariant theory and J. Rosenthal for encouraging me to publish this work.

\subsection*{List of key notations}

\begin{description}
\item[$k$] An algebraically closed field of characteristic zero.
\item[$\Oc_X$] The sheaf of regular functions on an algebraic variety $X$.
\item[$\Char(G)$] The character group of an algebraic group $G$.
\item[$\GL_{n_1, \ldots, n_s}$] The algebraic group $\GL_{n_1} \times \ldots \times \GL_{n_s}$.
\item[$\V^{(s)s}(\chi)$] The set of semistable points in an finite dimensional vector space $\V$  that is equipped with an action of an algebraic group $G$, and where $\chi \in \Char(G)$ is a character of $G$.
\item[$X//G$] A categorical quotient of a variety $X$ with respect to the action of an algebraic group $G$.
\item[$Q=(Q_0, Q_1, s,t)$] A quiver with finite sets of vertices $Q_0$ and edges $Q_1$, and with source and target maps $s,t: Q_1 \longrightarrow Q_0$.
\item[$\GL_{v,M}$] Let $Q$ be a quiver, $M \subset Q_0$ a subset of marked vertices, and $v \in \N^{Q_0}$ a dimension vector. Then $\GL_{v,M}= \prod_{i  \in M} \GL_{v_i}$.
\item[$\Rep_Q^v$] The space of representation of a quiver $Q$ of fixed dimension $v \in \N^{Q_0}$.
\item[${k[\V]^G_\chi}$] The vector space of $\chi$-invariant functions of a vector space $\V$, where $\chi$ is a character of an algebraic group $G$ acting on $\V$.
\item[$\Quot_{\pone/k}^{P,\Ec}$] The Quot scheme on $\proj{1}$ parametrizing quotients of $\Ec$ with fixed Hilbert polynomial $P$.
\item[$\Grass(V,d)$] The Grassmannian of quotient spaces of a vector space $V$ of dimension $d$.
\item[$\Grass(n,V)$] The Grassmannian of vector subspaces of a vector space $V$ of dimension $n$.
\item[$A^\ast(X)$] The Chow ring of the algebraic variety $X$.
\end{description}

\begin{description}
\item[$\Sigma_{n,m,p}^c$] The moduli space of controllable linear dynamical systems with $n$ states, $m$ inputs, and $p$ outputs.
\item[$H_{n,m,p}^c$] The Helmke compactification of the moduli space of controllable linear dynamical systems with $n$ states, $m$ inputs, and $p$ outputs.
\item[$L_{n,m,p}^c$] The Lomadze compactification of the moduli space of controllable linear dynamical systems with $n$ states, $m$ inputs, and $p$ outputs.
\end{description}

\section{Moduli of representations of quivers}
 
An elementary introduction to algebraic geometry is \cite{reid:88}. More material is covered in \cite{har:77}. For geometric invariant theory see \cite{pot:97} and \cite{mfk:94}. Quivers are introduced in \cite{auslander-reiten} and \cite{ringel}.
Fix an algebraically closed field $k$ of characteristic zero. We write $\GL_n:= \GL_n(k)$. Every algebraic variety $X$ is equipped with the Zariski-topology and a sheaf of regular functions $\Oc_X$, i.e. for every (Zariski)-open subset $U \subset X$ we are given the ring of regular functions $\Oc_X(U)$. All topological terms refer to the Zariski-topology. 

We start by reviewing some notions of geometric invariant theory applied to the study of representations of quivers. We follow King \cite{king:94}.  Let $G$ be a reductive linear algebraic group. It is in general difficult to construct quotients of algebraic varieties under algebraic group actions. A systematic approach is provided by geometric invariant theory.

\begin{definition}Let $G$ act on an algebraic variety $X$. A categorical quotient of $X$ by $G$ is a variety $Y$, together with a $G$-invariant morphism $\pi: X \longrightarrow Y$, such that the following universal property is satisfied:
Given any other variety $\tilde{Y}$ and any other $G$-invariant morphism $\tilde{\pi}$, there exists a unique morphism $Y \longrightarrow \tilde{Y}$ that makes the following diagram commute:
\[ \xymatrix{  X \ar[d]_{\pi} \ar[dr]^{\tilde{\pi}} & \\ Y \ar@{-->}[r] & \tilde{Y} } \]
\end{definition}

Notice that a categorical quotient is unique up to unique isomorphism. We denote it by $X//G$. Unfortunately, a categorical quotient can be quite far from an orbit space. Consider for an example the $\C^\ast$-action on $\C^n$ given by multiplication. The categorical quotient of this action is $\C^n//\C^\ast = \{ \ast \}$, since every $\C^\ast$-invariant morphism $f: \C^n \longrightarrow Y$ into a variety $Y$ is constant.

\begin{definition}Let $G$ act on an algebraic variety $X$. A pair $(Y, \varphi)$, consisting of a variety $Y$ and a $G$-invariant morphism $\varphi: X \longrightarrow Y$ is called a good quotient of $X$ under the $G$-action, if it verifies the following conditions:
\begin{enumerate}
\item $\varphi$ is affine and surjective;
\item for any open subset $V \subset Y$, the induced morphism 
\[ \varphi^\sharp: \Oc_Y(V) \longrightarrow \Oc_X(\varphi^{-1}(V))^G:= \{ f : \varphi^{-1}(V) \longrightarrow k \vbar f \text{ is regular and } G\text{-invariant} \}\]
 is an isomorphism of rings;
\item Any two disjoint, $G$-invariant, closed subsets in $X$ have disjoint and closed images in $Y$. 
\end{enumerate}

\noindent A good quotient $(Y, \varphi)$ is called a geometric quotient if it induces a bijection between the closed $G$-orbits in $X$ and the points of $Y$. 
\end{definition}

\begin{lemma}Every good quotient is a categorical quotient.
\end{lemma}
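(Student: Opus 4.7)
The plan is to handle the lemma in three stages: uniqueness, set-theoretic factorization via condition (3), and promotion to a morphism by localizing on $Y$ and invoking the classical affine GIT quotient result.

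Uniqueness is automatic. If $\psi_1, \psi_2 : Y \to \tilde{Y}$ both satisfy $\psi_i \circ \varphi = \tilde{\pi}$, then they agree on $\varphi(X) = Y$ by surjectivity of $\varphi$ (condition (1)); separatedness of $\tilde{Y}$ promotes pointwise agreement to equality of morphisms. This argument will be reused on overlaps in the gluing step below.

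For the set-theoretic factorization, the key is that condition (3) forces $\tilde{\pi}$ to be constant on the fibers of $\varphi$. If $\varphi(x_1) = \varphi(x_2)$, the orbit closures $\overline{Gx_1}$ and $\overline{Gx_2}$ are $G$-invariant closed subsets of $X$ whose images in $Y$ share the common point $\varphi(x_1)$; by the contrapositive of (3), $\overline{Gx_1} \cap \overline{Gx_2}$ contains some point $x_3$. Since $\tilde{\pi}$ is $G$-invariant, continuous, and lands in the separated variety $\tilde{Y}$, it is constant on each orbit closure, so $\tilde{\pi}(x_1) = \tilde{\pi}(x_3) = \tilde{\pi}(x_2)$. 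This yields a well-defined set-theoretic map $\psi : Y \to \tilde{Y}$ with $\psi \circ \varphi = \tilde{\pi}$.

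To upgrade $\psi$ to a morphism, I would cover $Y$ by affine opens $\{V_i\}$. By (1), each $\varphi^{-1}(V_i)$ is affine and $G$-stable, and by (2), $\Oc_Y(V_i) \cong k[\varphi^{-1}(V_i)]^G$, exhibiting $V_i \cong \Spec(k[\varphi^{-1}(V_i)]^G)$ as the classical affine GIT quotient. Invoking reductivity of $G$, this affine quotient is a categorical quotient of $\varphi^{-1}(V_i)$ for arbitrary (possibly non-affine) target variety, so $\tilde{\pi}|_{\varphi^{-1}(V_i)}$ factors uniquely through a morphism $\psi_i : V_i \to \tilde{Y}$. Applying the same uniqueness to the restrictions over $V_i \cap V_j$ shows the $\psi_i$ agree on overlaps, so they glue to the desired global morphism $\psi$.

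The main obstacle is really the affine-case input: that $\Spec(A^G)$ is a categorical quotient of $\Spec A$ when $G$ is reductive, with arbitrary target variety. This standard but non-trivial fact combines finite generation of $A^G$ (Hilbert--Nagata) with an argument that uses condition (3), applied affine-locally on the target, to match up opens on both sides of the quotient map. Once this input is available, all remaining steps in the lemma are formal.
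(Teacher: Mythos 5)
Your argument is correct, but note that the paper itself offers no proof at all here --- it simply cites \cite[Proposition 6.1.7]{pot:97} --- so the comparison is really with the standard textbook argument behind that citation. That argument localizes on the \emph{target} $\tilde{Y}$ rather than on $Y$: given $y \in Y$, one picks an affine open $W \subset \tilde{Y}$ containing the single point $\tilde{\pi}(\varphi^{-1}(y))$, uses condition (3) to find an open $V \ni y$ with $\varphi^{-1}(V) \subset \tilde{\pi}^{-1}(W)$, and then uses condition (2) directly to check that $f \circ \psi$ is regular on $V$ for every $f \in \Oc_{\tilde{Y}}(W)$, since $f \circ \tilde{\pi}$ is a $G$-invariant regular function on $\varphi^{-1}(V)$. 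This route needs no input beyond the three axioms of a good quotient. Your route instead localizes on $Y$ and outsources the work to the statement that $\Spec A \to \Spec(A^G)$ is a categorical quotient with arbitrary target for $G$ reductive; that statement is exactly the affine instance of the lemma you are proving, and its standard proof is the target-localization argument just described. So your reduction is valid and cleanly organized (the well-definedness step via condition (3) and the gluing via uniqueness are both correct --- though what you need for constancy of $\tilde{\pi}$ on orbit closures is that points of $\tilde{Y}$ are closed, not separatedness per se), but it relocates rather than removes the essential content; if you unpack your black box you recover the classical proof, whereas running the target-localization argument directly on the given good quotient is both shorter and self-contained.
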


\begin{proof}\cite[Proposition 6.1.7]{pot:97}
\end{proof}

In our point of view quotient always means categorical quotient and is denoted by $X//G$. Being a good (or geometric) quotient is an additional - and very desireable - property. 

\begin{definition}A morphism of algebraic groups $\chi: G \longrightarrow k^\ast$ is called a character of $G$. We denote the group of characters of $G$ with $\Char(G)$.
\end{definition}

\begin{example}The character group of $\GL_n$ is freely generated by the character $\det : \GL_n \longrightarrow k^\ast$.
\end{example}

Let $\V$ be a finite dimensional $k$-vector space, and let $\rho: G \longrightarrow \GL(\V)$ be a linear representation of $G$. Endow $\V$ with the induced $G$-action. Assume that $\Delta:= \ker \rho \subset G$ is irreducible. In our case, the group $G$ will always be a finite product of general linear groups $G=\GL_{n_1} \times \ldots \times \GL_{n_s}$. Recall that $\GL_n$ is reductive, as is any finite product of reductive groups.

\begin{definition}Let $\chi$ be a character of $G$. A $\chi$-invariant function of $\V$ is a regular function
\[ f: \V \longrightarrow k, \]
such that $f(gv)= \chi(g) f(v)$ for all points $v \in \V$.

We denote the $k$-vector space of  $\chi$-invariant functions of $\V$ by $k[\V]^G_{\chi}$ and the $k$-algebra of invariant polynomials by $k[\V]^G$.
\end{definition} 

\begin{definition}Let $\chi \in \Char(G)$ be a character. A point $v \in \V$ is called
\begin{enumerate}
\item $\chi$-semistable, if and only if a $\chi^n$-invariant function $f \in k[\V]_{\chi^n}^G$  exists for some $n \in \N_{>0}$, such that $f(v) \neq 0$;
\item $\chi$-stable, if there is a $\chi^n$-invariant $f \in k[\V]_{\chi^n}^G$ with $n \geq 1$, $f(v) \neq 0$, such that the $G$-action on $\{ v \in \V \vbar f(v) \neq 0 \}$ is closed and $\dim G \cdot v = \dim G/\Delta$.
\end{enumerate}
\end{definition}

\begin{remark}In general $\chi$-(semi)stable points need not exist. Also it is non trivial in general to determine when there are semistable points that are not stable.
\end{remark}

We denote by $\V^{ss}(\chi)$ and $\V^{s}(\chi)$  the respective subsets of $\chi$-semistable and $\chi$-stable points. Their importance  lies in the fact that we can construct the quotient space
\begin{equation} \pi: \V^{ss}(\chi) \longrightarrow \V^{ss}(\chi) // G = \Proj \oplus_{n \geq 0} k[\V]^G_{\chi^n}. \end{equation}
It has many nice properties (see \cite{mfk:94}):
\begin{enumerate}
\item it is a good quotient;
\item the quotient space $\Proj \oplus_{n \geq 0} k[\V]^G_{\chi^n}$ is projective over $\Spec k[\V]^G$;
\item there is an open subset $U \subset \Proj \oplus_{n \geq 0} k[\V]^{\chi^n}$, such that $\pi^{-1}(U) = \V^{s}(\chi)$ and that the restriction
\begin{equation} \pi|_{\V^{s}(\chi)}: \V^{s}(\chi) \longrightarrow \V^{s}(\chi)//G:= U \end{equation}
is a geometric quotient.
\end{enumerate}

It follows from this description that the quotient $\V^{ss}(\chi) // G$ is projective over $k$ if and only if the ring of invariants consists only of the constant functions, i.e. if $k[\V]^G=k$.

\begin{remark}We used the terminology $\Spec$ and $\Proj$ in order to make a precise statement for the reader familiar with these notions. The functor $\Spec$ associates with every ring $A$ a geometric object, the affine scheme $\Spec A$. Its points are the prime ideals in $A$. If $A=k[X_1, \ldots, X_n] / \sqrt{ \langle f_1, \ldots, f_r \rangle}$ for some polynomials $f_1, \ldots, f_r \in k[X_1, \ldots, X_n]$, then $\Spec A$ can be identified with the zero-locus $\{ x \in k^n \vbar f_i(x) =0 \; \forall i=1, \ldots, r \}$.

The functor $\Proj$ associates with every graded ring $B= \oplus_{d \in \N} B_d$ a geometric object, the projective scheme $\Proj B$. Its points are the homogeneous prime ideals $\p \subset B$ that do not contain $\oplus_{d>0} B_d$. If $B = k[X_0, \ldots, X_n] / \sqrt{ \langle g_1 \ldots, g_r \rangle}$ for homogeneous polynomials $g_1, \ldots, g_r \in k[X_0, \ldots, X_n]$, then $\Proj B$ can be identified with the zero-locus $\{ x \in \proj{n} \vbar g_i (x) = 0 \; \forall i=1, \ldots, r \}$.
\end{remark}

\begin{example}Consider again the action of $\C^\ast$ on $\C^{n+1}$ by multiplication. Let $d \in \Z$ be an integer. The corresponding character is $\chi_d: \C^\ast \longrightarrow \C^\ast, \; z \mapsto z^d$. If $d<0$, then there are no $\chi_d$-invariant polynomial functions on $\C^{n+1}$. If $d\geq 0$, then the $\chi_d$-invariant functions are the homogeneous polynomials $f \in \C[X_0, \ldots, X_n]$ of degree $d$ that we denote by $\C[X_0, \ldots, X_n]_d$.

It follows that for $d<0$, the set of $\chi_d$-semistable points is empty. For $d=0$, every point $v \in \C^{n+1}$ is $\chi_d$-semistable, but none is $\chi_d$-stable. For $d>0$, a point $v \in \C^{n+1}$ is $\chi_d$-semistable and $\chi_d$-stable if and only if $v \neq 0$, so that $\left(\C^{n+1}\right)^s(\chi_d) = \left(\C^{n+1}\right)^{ss}(\chi_d) = \C^{n+1}- \{0\}$. 

We have already seen that $\C^{n+1}//\C^\ast = \{ \ast \}$. Now $\C^{n+1}-\{0\} = \left(\C^{n+1}\right)^{s}(\chi_1)$. Since $\chi_1^d = \chi_d$ and $k[\C^{n+1}]^{\C^\ast}_{\chi_d}= \C[X_0, \ldots, X_n]_{d}$ for $d \geq 0$, we conclude that $\C^{n+1}-\{0\} // \C^\ast = \Proj \oplus_{d \geq 0} \C[X_0, \ldots, X_n]_d = \proj{n}$ and that it is indeed a geometric quotient.

The main point is that in the whole space $\C^{n+1}$  there is only one closed orbit, namely the fixed point $\{0\}$ of the $\C^\ast$-action. If we remove it, the induced action of $\C^\ast$ on $\C^{n+1}-\{0\}$ is closed, i.e. all $\C^\ast$-orbits are closed in $\C^{n+1} - \{ 0 \}$ and we obtain an interesting geometric quotient. This explains the importance of choosing the right subsets to construct quotients in algebraic geometry. Geometric invariant theory provides us with a tool to systematically identify these subsets. 
\end{example}

\begin{theorem}\label{luna}Let $G=\GL_{v_1} \times \ldots \GL_{v_s}$ for some $v_1, \ldots, v_s \in \N$, and let $\chi \in \Char(G)$ be a character. If all stabilizers of the $G$-action on $\V^{s}(\chi)$ are trivial, then the quotient map
\[ \V^s(\chi) \longrightarrow \V^s(\chi)//G \]
is a principal $G$-bundle. 
\end{theorem}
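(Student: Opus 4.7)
The plan is to combine Luna's \'etale slice theorem with the fact that $G = \GL_{v_1} \times \cdots \times \GL_{v_s}$ is a special algebraic group in the sense of Serre--Grothendieck; the latter upgrades \'etale-local triviality to Zariski-local triviality, which is what is needed to get a genuine principal bundle.

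First, I would record the setup that has been established in the paragraphs preceding the theorem: the map $\pi: \V^s(\chi) \longrightarrow \V^s(\chi)//G$ is a geometric quotient, so it is affine and surjective, and its closed fibres are precisely the $G$-orbits. The hypothesis that all stabilizers are trivial implies that the $G$-action on $\V^s(\chi)$ is set-theoretically free, and combined with the fact that $\dim G \cdot v = \dim G/\Delta$ on the stable locus it also implies that the scheme-theoretic stabilizer subgroup scheme of every $v$ is trivial. In particular every orbit has dimension $\dim G$, and by the definition of $\chi$-stability the orbits are closed in $\V^s(\chi)$.

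Next, I would invoke Luna's \'etale slice theorem for the reductive group $G$ acting on the smooth affine $G$-invariant open subset $\V^s(\chi)$: for every $v \in \V^s(\chi)$ the orbit $G\cdot v$ is closed and the stabilizer is trivial, so there exists a locally closed affine subvariety $W \subset \V^s(\chi)$ passing through $v$ such that the multiplication map $G \times W \longrightarrow \V^s(\chi)$ is $G$-equivariant and \'etale with image a $G$-invariant Zariski-open neighbourhood of $G\cdot v$, and such that the induced map $W \longrightarrow \V^s(\chi)//G$ is \'etale onto a Zariski-open neighbourhood $V$ of $[v]$. Pulling $\pi$ back along $W \to V$ produces the trivial $G$-torsor $G \times W \to W$, which exhibits $\pi$ as a principal $G$-bundle in the \'etale topology.

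Finally, I would appeal to the Serre--Grothendieck theory of special groups: $\GL_n$ is special (any principal $\GL_n$-bundle is Zariski-locally trivial by the structure of locally free sheaves), and a finite product of special groups is again special, so $G = \GL_{v_1} \times \cdots \times \GL_{v_s}$ is special. Consequently every principal $G$-bundle that becomes trivial in the \'etale topology already trivializes Zariski-locally. Applied to $\pi$ this gives the claim. The main technical point to get right is verifying the precise hypotheses of Luna's slice theorem on the stable locus (closedness of orbits and triviality of stabilizer schemes, not merely of stabilizer groups); once this is in place the passage from \'etale to Zariski local triviality via specialness is purely formal.
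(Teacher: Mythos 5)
Your proposal is correct and follows exactly the same route as the paper's proof: Luna's slice theorem gives that the quotient map is a principal bundle in the \'etale topology, and the Serre--Grothendieck specialness of $\GL_{v_1} \times \cdots \times \GL_{v_s}$ upgrades this to Zariski-local triviality. You simply spell out the slice-theorem hypotheses in more detail than the paper does (which, in characteristic zero, is harmless since triviality of stabilizer groups does give triviality of stabilizer schemes).
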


\begin{proof}By Luna's Slice theorem, the quotient map is a principal bundle in the \'etale topology. See \cite[Corollary on page 199]{mfk:94}. Serre introduced in \cite{serre:58} the notion of special linear algebraic groups as being those for which all principal bundles in the \'etale topology are already locally trivial in the Zariski topology. He also proved that finite products of general linear groups are special. 
\end{proof}

\begin{definition}A quiver is a $4$-tuple $Q=(Q_0, Q_1, s,t)$ consisting of two finite sets $Q_0$ (the vertices) and $Q_1$ (the edges), and of two maps $s,t: Q_1 \longrightarrow Q_0$ which assign to an edge $\alpha \in Q_1$ its source $s(\alpha)$ and its tail $t(\alpha)$, respectively.
\end{definition}

\begin{definition}Let $Q=(Q_0, Q_1, s,t)$ be a quiver and fix $v \in \N^{Q_0}$. The affine space
\[ \Rep_Q^v:= \oplus_{\alpha \in Q_1} \Hom_k \bigl( k^{v_{s(\alpha)}}, k^{v_{t(\alpha )}} \bigr)\]
is called the space of representations of $Q$ of fixed dimension $v$.
\end{definition}

\begin{definition}Let $R=(r_\alpha)_{\alpha \in Q_1}$ be a representation of a quiver $Q$ of dimension $v \in \N^{Q_0}$. A subrepresentation of $R$ is a family of vector subspaces $S=(S_i)_{i \in Q_0}$, $S_i \subset k^{v_i}$ such that
\[ r_{\alpha}(S_{s(\alpha)}) \subset S_{t(\alpha)} \text{ for all } \alpha \in Q_1. \]
The dimension of $S$ is the vector $\dim S:= ( \dim S_i)_{i \in Q_0} \in \N^{Q_0}$.
The subrepresentation $S$ of $R$ is called proper if $0 \neq \dim S \neq \dim R$, i.e. if $S_i$ has positive dimension for some $i \in Q_0$ and if $S_j \subsetneq k^{v_j}$ is a proper subspace for some $j \in Q_0$.
\end{definition}

The reductive linear algebraic group $\GL_v:= \prod_{i \in Q_0}\GL_{v_i}$ acts naturally on $\Rep_Q^v$ by change of basis on all vertices, i.e.
\begin{equation} (g_i)_{i \in Q_0} \cdot (r_\alpha)_{\alpha \in Q_1}:= \Bigl( g_{t(\alpha)} r_\alpha g_{s(\alpha)}^{-1} \Bigr)_{\alpha \in Q_1}, \end{equation}
where $(r_\alpha)_{\alpha \in Q_1} \in \Rep_Q^{v}$, $(g_i)_{i \in Q_0} \in \GL_v$.

Fix a subset $M \subset Q_0$ of marked vertices. It defines a subgroup 
\begin{equation} \GL_{v,M}:= \prod_{i \in M} \GL_{v_i} \subset \GL_v. \end{equation}
We endow $\Rep_Q^v$ with the induced action. 

Every character $\chi \in \Char \bigl( \GL_{v,M} \bigr)$ is of the form 
\begin{equation} \chi: \GL_{v,M} \longrightarrow k^\ast, \; (g_i)_{i \in M} \mapsto \prod_{i \in M} \det(g_i)^{\chi_i} \end{equation}
for a family of integers $(\chi_i)_{i \in M} \in \Z^M$. Henceforth we indentify the character $\chi$ with the corresponding family of integers $(\chi_i)_{i \in M}$ and therefore the Character group $\Char \bigl( \GL_{v,M} \bigr)$ with $\Z^M$.

We usually indicate the data $(Q,v,M)$ by a diagram of the following form:
\begin{equation}  \xymatrix{ \overset{n}{\bullet} \ar@/^/[d] \ar@/_/[d] \ar@/^/[r] \ar[r] \ar@/_/[r]&   
                 \circ \, \scriptstyle{1} \ar@/^/[dl]  \ar@/_/[dl] \\
            \underset{n}{\bullet} &}.\end{equation}
The numbers denote the dimension we assign to the corresponding vertex, a filled dot indicates that this vertex is marked, i.e. belongs to $M$, an unfilled dot indicates that the vertex is unmarked.

King has found a helpful criterion for identifying the sets of semistable and stable representations of a given quiver which we shortly recall. He described it in the situation $M=Q_0$, i.e. where the full group $\GL_v$ acts on the representation space $\Rep_Q^v$. We are specifically interested in the case $M \subsetneq Q_0$, so we need to adjust it to our more general situation. This will be done subsequently.

Let $R \in \Rep_Q^v$ be a representation of $Q$ and fix a character $\chi \in \Char \bigl(\GL_{v,M}\bigr)$.
For any subrepresentation $S$ of $R$ we put
\begin{equation} \langle \chi, S \rangle_{_M}:= \sum_{i \in M} (\dim S)_i \cdot \chi_i \in \Z. \end{equation}

\begin{theorem}[King]\label{quiver_stability}Let $Q$ be a quiver, $v \in \N^{Q_0}$ a dimension vector, and $\chi \in \Char(\GL_v)$ a character. A point $R \in \Rep_Q^v$ is 
\begin{enumerate}
\item $\chi$-semistable, if and only if $\langle \chi, R \rangle_{_{Q_0}}=0$ and $\langle  \chi, S \rangle_{_{Q_0}} \geq 0$ for all proper subrepresentations $S$ of $R$;
\item $\chi$-stable, if and only if $\langle \chi, R \rangle_{_{Q_0}}=0$ and $\langle \chi, S \rangle_{_{Q_0}} > 0$ for all proper subrepresentations $S$ of $R$.
\end{enumerate}
\end{theorem}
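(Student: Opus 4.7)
The plan is to derive King's criterion from the classical Hilbert--Mumford numerical criterion, translating its condition on one-parameter subgroups of $\GL_v$ into the combinatorial condition on subrepresentations. First I would set up the linearization: the character $\chi$ defines a $\GL_v$-linearization of the trivial line bundle on $\Rep_Q^v$ for which the $\chi^n$-invariant functions of the excerpt coincide with the classical invariant sections of the $n$-th power of this linearization. With this identification, $\chi$-(semi)stability as defined above becomes ordinary GIT (semi)stability, and the Hilbert--Mumford criterion applies: $R$ is $\chi$-semistable iff $\mu^\chi(R,\lambda) := \langle \chi, \lambda \rangle \geq 0$ for every non-trivial one-parameter subgroup $\lambda:\Gm\longrightarrow\GL_v$ with $\lim_{t\to 0}\lambda(t)\cdot R$ existing, and $\chi$-stable iff this inequality is strict whenever $\lambda$ does not lie in the kernel of the action on $R$.

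The next step is the dictionary between one-parameter subgroups and filtrations of $R$. Any $\lambda$ diagonalises each $k^{v_i}$ into weight spaces $W_i^{(n)}$, and the resulting decreasing filtration $F_i^{(\ell)}:=\bigoplus_{n\geq\ell}W_i^{(n)}$ yields an existing limit $\lim_{t\to 0}\lambda(t)\cdot R$ precisely when each $r_\alpha$ carries $F_{s(\alpha)}^{(\ell)}$ into $F_{t(\alpha)}^{(\ell)}$; this is a direct check of weights on $\Hom_k(W_{s(\alpha)}^{(n)},W_{t(\alpha)}^{(m)})$. Hence every admissible $\lambda$ produces a family of subrepresentations $F^{(\ell)}$ of $R$, and every proper subrepresentation $S$ arises as the single non-trivial step of such a family: choose a splitting $k^{v_i}=S_i\oplus C_i$ and let $\lambda$ act with weight $1$ on $S_i$ and weight $0$ on $C_i$.

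Having set up this dictionary, I would rewrite the Mumford weight
\[ \mu^\chi(R,\lambda)=\sum_{i\in Q_0}\chi_i\sum_{n\in\Z} n\cdot\dim W_i^{(n)} \]
by Abel summation as a sum $\sum_{\ell}\langle\chi,F^{(\ell)}\rangle_{Q_0}$ minus an integer multiple of $\langle\chi,R\rangle_{Q_0}$. Specialising to the one-parameter subgroup acting by a common scalar on all $k^{v_i}$, which lies in the kernel of the action on $R$, and combining with its inverse forces the normalisation $\langle\chi,R\rangle_{Q_0}=0$; under this normalisation, non-negativity (resp.\ positivity) of every pairing $\langle\chi,F^{(\ell)}\rangle_{Q_0}$ forces the same for every Mumford weight, while conversely the single-step filtration above shows each individual $\langle\chi,S\rangle_{Q_0}$ is itself a Mumford weight. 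Semistability of $R$ therefore reduces to $\langle\chi,S\rangle_{Q_0}\geq 0$ for every proper subrepresentation $S$, and stability to strict positivity, yielding (1) and (2).

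The step I expect to be the main obstacle is showing that the infinitely many Hilbert--Mumford inequalities really reduce to the finitely many arising from subrepresentations, i.e.\ that optimising $\mu^\chi(R,\lambda)$ over admissible $\lambda$ is always attained on a $\{0,1\}$-weighted subgroup. The splitting construction handles one direction, but the converse relies on the non-negative combination produced by Abel summation together with the convex piecewise-linear dependence of $\mu^\chi(R,\cdot)$ on the weight data of $\lambda$; this is also what controls precisely when strictly semistable (non-stable) points appear, consistent with the remark following the definition of stability.
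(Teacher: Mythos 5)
Your proposal is correct and follows essentially the same route as the source the paper cites for this statement (the paper gives no proof of its own, only the reference to King's Proposition 3.1): King's argument is exactly the Hilbert--Mumford criterion for the $\chi$-twisted linearization, the dictionary between admissible one-parameter subgroups and filtrations by subrepresentations, and the Abel-summation rewriting of $\mu^\chi(R,\lambda)$ as a non-negative combination of the pairings $\langle\chi,F^{(\ell)}\rangle_{Q_0}$ after normalizing $\langle\chi,R\rangle_{Q_0}=0$ via the central subgroup. The "main obstacle" you flag at the end is in fact already dispatched by that decomposition, so no further convexity argument is needed.
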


\begin{proof}
\cite[Proposition 3.1]{king:94}.
\end{proof}

An (oriented) path of a quiver is a finite sequence of edges $\alpha_1, \ldots, \alpha_s$, s. th. $t(\alpha_i)=s(\alpha_{i+1})$ for all $i=1, \ldots, s-1$. A path is called an (oriented) cycle if additionally $s(\alpha_1) = t(\alpha_s)$. A cycle $(\alpha_1, \ldots, \alpha_s)$ defines a morphism $\Rep_{Q}^v \longrightarrow \End(k^{v_{s(\alpha_1)}}), \; R=(r_{\alpha})_{\alpha \in Q_1} \mapsto r_{\alpha_s} \circ \ldots \circ r_{\alpha_1}$. Composing this map with the trace function, we obtain a regular function on $\Rep_{Q}^v$. 

\begin{theorem}[Le Bruyn, Procesi]\label{procesi} The ring of polynomial invariants for the action of $\GL_{v}$ on $\Rep_Q^v$ is generated by traces of oriented cycles in the quiver $Q$ of length at most $N^2$, where $N= \sum v_i$.
\end{theorem}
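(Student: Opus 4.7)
The plan is to reduce the problem to the first fundamental theorem for the simultaneous conjugation action of $\GL_N$ on tuples of $N\times N$ matrices, and then to invoke the Razmyslov--Procesi degree bound. Set $N := \sum_{i \in Q_0} v_i$, fix a decomposition $k^N = \bigoplus_{i \in Q_0} k^{v_i}$, and let $e_i \in \Mat_N(k)$ denote the orthogonal idempotent onto the $i$-th summand. Each arrow $r_\alpha \in \Hom_k(k^{v_{s(\alpha)}}, k^{v_{t(\alpha)}})$ extends by zero to an $N\times N$ matrix $\tilde r_\alpha$ with $\tilde r_\alpha = e_{t(\alpha)} \tilde r_\alpha e_{s(\alpha)}$. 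This yields a closed $\GL_v$-equivariant embedding $\Rep_Q^v \hookrightarrow \Mat_N^{|Q_1|}$, where $\GL_v$ sits inside $\GL_N$ as the block-diagonal subgroup acting by simultaneous conjugation.

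The next and main step is to promote $\GL_v$-invariants on $\Rep_Q^v$ to $\GL_N$-invariants on an enlarged ambient space. Consider $\V := \Mat_N^{|Q_0|} \oplus \Mat_N^{|Q_1|}$ with the simultaneous $\GL_N$-conjugation action. The stabiliser in $\GL_N$ of the tuple $(e_i)_{i \in Q_0}$ is exactly the block-diagonal subgroup $\GL_v$, and its $\GL_N$-orbit is a single conjugacy class of $|Q_0|$-tuples of orthogonal idempotents of fixed ranks, which is closed by Matsushima's criterion since the stabiliser is reductive. Combining the induction isomorphism $k\bigl[\GL_N \cdot (e_i) \times \Mat_N^{|Q_1|}\bigr]^{\GL_N} \cong k[\Mat_N^{|Q_1|}]^{\GL_v}$ with the reductivity of $\GL_N$, which makes both restriction maps $k[\V]^{\GL_N} \twoheadrightarrow k[\GL_N \cdot (e_i) \times \Mat_N^{|Q_1|}]^{\GL_N}$ and $k[\Mat_N^{|Q_1|}]^{\GL_v} \twoheadrightarrow k[\Rep_Q^v]^{\GL_v}$ surjective, one concludes that every $\GL_v$-invariant polynomial on $\Rep_Q^v$ is the pullback of some $\GL_N$-invariant polynomial on $\V$ under the specialisation of the first $|Q_0|$ matrix variables at $(e_i)$. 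This reduction is the technical heart of the argument and the main obstacle.

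With this in hand I would invoke the first fundamental theorem of Procesi--Sibirskii: $k[\V]^{\GL_N}$ is generated as a $k$-algebra by traces $\operatorname{tr}(X_{i_1} \cdots X_{i_s})$ of products of the matrix variables. Razmyslov's theorem, also proved by Procesi, provides the sharp bound $s \leq N^2$. Translating this back into the quiver, specialisation of the first $|Q_0|$ variables at the idempotents $e_i$ forces a trace $\operatorname{tr}(X_{i_1} \cdots X_{i_s})$ to vanish unless consecutive factors match at the source/target maps; in the surviving case the idempotent factors act as identities on the appropriate blocks and can be deleted from the word, reducing the trace to $\operatorname{tr}(r_{\alpha_s} \circ \cdots \circ r_{\alpha_1})$ for some oriented cycle $(\alpha_1, \ldots, \alpha_s)$ in $Q$. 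Since deleting idempotents only shortens the word, the bound $s \leq N^2$ on the cycle length is preserved, and the resulting generating set is exactly the one announced.
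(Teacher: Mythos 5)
The paper itself does not prove this statement --- it simply cites \cite[Theorem 1]{lebruyn:90} --- and what you have written is essentially a reconstruction of the argument in that reference: extend each arrow by zero to an $N\times N$ matrix, adjoin the complete system of orthogonal idempotents $(e_i)_{i\in Q_0}$ so that $\GL_v$-invariants of $\Rep_Q^v$ become restrictions of $\GL_N$-invariants of tuples of matrices under simultaneous conjugation, invoke the first fundamental theorem of matrix invariants together with Razmyslov's bound $N^2$ on the length of generating trace words, and observe that specialisation at the idempotents annihilates every trace word except those encoding oriented cycles of $Q$, without increasing word length. The reduction via the closed orbit $\GL_N\cdot(e_i)\cong \GL_N/\GL_v$, the induction isomorphism, and the two surjective restriction maps (Reynolds operator for the reductive groups $\GL_N$ and $\GL_v$ onto closed invariant subvarieties) is exactly the technical heart of the original proof and is carried out correctly. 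The one place where your justification is wrong as written is the closedness of the orbit $\GL_N\cdot(e_i)$: Matsushima's criterion gives the implication in the \emph{opposite} direction (a closed orbit of a reductive group has reductive stabiliser; equivalently $\GL_N/H$ is affine iff $H$ is reductive), and a reductive stabiliser does not force the orbit to be closed --- the orbit of $1$ under the scaling action of $k^\ast$ on $k$ has trivial stabiliser but is not closed. Fortunately the fact you need is true and elementary: the orbit of $(e_i)_{i\in Q_0}$ is precisely the set of tuples $(f_i)$ satisfying $f_if_j=\delta_{ij}f_i$, $\sum_i f_i=\id_N$ and $\operatorname{tr} f_i=v_i$ (rank equals trace for idempotents in characteristic zero, and any two such tuples are conjugate), and these are Zariski-closed conditions. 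With that one repair your proof is complete and coincides with the one the paper delegates to Le Bruyn and Procesi.
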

\begin{proof} 
\cite[Theorem 1]{lebruyn:90}.
\end{proof}

If the quiver contains no oriented cycles, then $k[\V]^G = k$ and hence for any character $\chi \in \Char\bigl(\GL_v\bigr)$ the quotient $\V^{ss}(\chi)//\GL_v$ is projective over $k$. This special case has already been obtained by King (\cite[Proposition 4.3]{king:94}). If the quiver contains a cycle, then there is a non constant, invariant function, and hence $k \subsetneq k[\V]^G$, which in turn implies that the quotient is non projective. 

\begin{example}Let $(Q,v,M)$ be as specified by the diagram 
\begin{equation} \xymatrix{ \bullet \, \scriptstyle{n} \ar@(ul,dl)[]_{A} } \end{equation}
and assume $n>0$. Then $\Rep_Q^v=k^{n \times n}$ and $\GL_v = \GL_n$ acts by conjugation $(g, A) \mapsto gAg^{-1}$. Let $\chi \in \Z = \Char(\GL_n)$ be a character. 

By Theorem \ref{quiver_stability}, a point $R \in \Rep_Q^v$ is $\chi$-semistable if $\chi n =0$ and if for all $U \subset k^n$ with $R(U) \subset U$, we have $\chi \dim U \geq 0$. Hence for $\chi \neq 0$, there are no $\chi$-semistable points. But every point is $0$-semistable. However, $R$ is $0$-stable only if there is no proper $R$-invariant linear subspace $U \subset k^n$. But when $n>1$, then every matrix $R$ has such subspaces (choose any eigenvector $w \in k^n$ and put $U:=\langle w \rangle$).

Now we consider the ring $k[\Rep_Q^v]^{\GL_n}$ of $\GL_n$-invariant functions on $k^{n \times n}$. Taking traces of oriented cycles in the quiver $Q$ means considering the morphisms $t_l : k^{n \times n} \longrightarrow k, \, A \mapsto tr A^l$. 
Theorem \ref{procesi} tells us that 
\[ k[\Rep_Q^v]^{\GL_n} = k\left[t_1, \ldots, t_{n^2}\right].\]
In this case it suffices of course to take only the first $n$ functions $t_1, \ldots, t_n$.
\end{example}

We will need versions of both theorems for the case where $M \subsetneq Q_0$ is a proper subset. They can be obtained by applying the two reduction steps described below. Let us stress again that this reduction to the special situation is not new. Crawley-Boevey applied this method in \cite{boevey:2001} to the study of framed quiver moduli, initiated by Nakajima in \cite{nakajima:96}. In \cite{halic:04}, Halic and Stupariu use this reduction to  generalize Theorem \ref{procesi} to our situation (and hence they obtained Corollary \ref{general_procesi}). It is also implicitly used in \cite{geiss:06}[Appendix by Le Bruyn and Reineke]. Since we don't know of a reference where this reduction is presented in the generality we need, we include a description.

\subsubsection*{Step One}

Let $Q$ be a quiver,  $v \in \N^{Q_0}$ a dimension vector, and $M \subset Q_0$ a subset. We can assume w.l.o.g. that no edge connects two unmarked vertices, since they are unaffected by the group action.

\noindent Construct a new quiver $\tilde{Q}$ and a new dimension vector $\tilde{v} \in \N^{\tilde{Q}_0}$ as follows:

\begin{itemize}
\item collapse all unmarked vertices to a single one, denoted by $\infty$;
\item replace all edges $\alpha \in Q_1$ connecting a marked vertex $i \in M$ and an unmarked vertex $j \notin M$, by $v_j$ edges, connecting $i$ and $\infty$  (keep the orientation);
\item put $\tilde{v}_i:= v_i$ for $i \in M$ and $\tilde{v}_{\infty}:=1$;
\end{itemize} 

\begin{equation} \begin{array}{llll} 
\xymatrix{ \overset{n}{\bullet} \ar@/^/[d] \ar@/_/[d] \ar[r]& \overset{3}{\circ}  \\
            \underset{n}{\bullet} & \underset{2}{\circ} \ar[l]}    &
\begin{matrix} \\ \\ \\\mapsto \\ \end{matrix} &  & 

 \xymatrix{ \overset{n}{\bullet} \ar@/^/[d] \ar@/_/[d] \ar@/^/[r] \ar[r] \ar@/_/[r]&   
                 \circ \, \scriptstyle{1} \ar@/^/[dl]  \ar@/_/[dl] \\
            \underset{n}{\bullet} &} 
\end{array} \end{equation}

\begin{lemma}There  is a natural  $\GL_{v,M}$-equivariant isomorphism of varieties \[\Phi: \Rep_Q^v \iso \Rep_{\tilde{Q}}^{\tilde{v}}.\]
\end{lemma}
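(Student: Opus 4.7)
The plan is to construct $\Phi$ edge-by-edge and then verify that it respects the group action. Since we may assume no edge of $Q$ connects two unmarked vertices, every edge $\alpha \in Q_1$ falls into one of three types: (i) both endpoints marked, (ii) $s(\alpha) \in M$ and $t(\alpha) \notin M$, or (iii) $s(\alpha) \notin M$ and $t(\alpha) \in M$. In the construction of $\tilde Q$, edges of type (i) are kept as is, while edges of type (ii) or (iii) incident to an unmarked vertex $j$ of dimension $v_j$ are replaced by $v_j$ parallel edges between the marked endpoint and $\infty$ (with $\tilde v_\infty = 1$).

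For edges of type (i), nothing changes: $\Hom_k(k^{v_{s(\alpha)}}, k^{v_{t(\alpha)}})$ appears identically as a summand in both $\Rep_Q^v$ and $\Rep_{\tilde Q}^{\tilde v}$, so $\Phi$ is the identity on this factor. For an edge $\alpha$ of type (ii) with $t(\alpha) = j \notin M$, the fixed standard basis of $k^{v_j}$ yields a canonical identification
\[
\Hom_k\bigl(k^{v_{s(\alpha)}}, k^{v_j}\bigr) \;\iso\; \bigoplus_{\ell=1}^{v_j} \Hom_k\bigl(k^{v_{s(\alpha)}}, k\bigr) \;=\; \bigoplus_{\ell=1}^{v_j} \Hom_k\bigl(k^{\tilde v_{s(\alpha)}}, k^{\tilde v_\infty}\bigr),
\]
where the right-hand side is exactly the contribution of the $v_j$ new parallel edges replacing $\alpha$ in $\tilde Q$. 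An analogous identification, using $\Hom_k(k^{v_j}, k^{v_i}) \cong \bigoplus_{\ell=1}^{v_j} \Hom_k(k, k^{v_i})$, handles edges of type (iii). Taking the direct sum over all $\alpha \in Q_1$ defines $\Phi$ as an isomorphism of vector spaces, hence of affine varieties.

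It remains to verify $\GL_{v,M}$-equivariance. Observe that $\GL_{\tilde v, \tilde M} = \prod_{i \in M} \GL_{\tilde v_i} = \prod_{i \in M} \GL_{v_i} = \GL_{v,M}$, so the groups acting on source and target coincide, and we are comparing two actions of the same group. On a type-(i) summand both actions are visibly the same. On a type-(ii) or type-(iii) summand, the action of $g = (g_i)_{i \in M}$ on $\Rep_Q^v$ is pure left- or right-multiplication by $g_{s(\alpha)}$ or $g_{t(\alpha)}$ (nothing happens at the unmarked endpoint $j$ since $j \notin M$), and exactly the same formula describes the action of $g$ on each of the $v_j$ new edges in $\Rep_{\tilde Q}^{\tilde v}$ (since $\infty \notin \tilde M$, the action at $\infty$ is trivial on $k^{\tilde v_\infty} = k$). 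Thus the basis-splitting isomorphisms above intertwine the two actions factor by factor, proving that $\Phi$ is $\GL_{v,M}$-equivariant.

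The only mildly delicate point is the bookkeeping at an unmarked vertex $j$: the identification $k^{v_j} \cong k^{v_j}$ as a direct sum of copies of $k$ is not intrinsic, but it is canonical once we fix the standard basis of $k^{v_j}$, and this fixed choice is compatible with equivariance precisely because $\GL_{v,M}$ does not act on the factor $k^{v_j}$. This is the conceptual reason the reduction works; once it is phrased this way the lemma becomes a direct verification rather than a calculation.
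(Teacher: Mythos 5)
Your proof is correct and follows essentially the same route as the paper: decompose edge-by-edge, identify a map into (or out of) an unmarked vertex of dimension $v_j$ with its $v_j$ rows (or columns), and check equivariance using the fact that $\GL_{v,M}$ does not act at unmarked vertices. Your remark about why the standard-basis splitting is compatible with the group action is exactly the point the paper's row-decomposition computation verifies.
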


\begin{proof}
Arrows connecting marked vertices remain unchanged. So we have to look only at arrows that link marked with unmarked vertices. If the original quiver consists of an arrow from a marked vertex to an unmarked one with dimension vectors $n$ and $m$:
$\xymatrix{ \overset{n}{\bullet} \ar[r]& \overset{m}{\circ} }$,
then $\Rep_Q^v = \Hom(k^n, k^m)$ and $\Rep_{\tilde{Q}}^{\tilde{v}} = \Hom(k^n,k)^m$. In both cases the group acting on the representation space is $\GL_n$. The isomorphism $\Phi$ then maps a $m \times n$ matrix $A \in \Hom(k^n, k^m)$ to its $m$ rows $A_i \in \Hom(k^n, k)$. This defines an equivariant map, since
\begin{equation} \left( \begin{smallmatrix} A_1 \\ \vdots \\ A_m \end{smallmatrix} \right) g^{-1} = \left( \begin{smallmatrix} A_1 g^{-1} \\ \vdots \\ A_m g^{-1} \end{smallmatrix} \right) \text{ for any } g \in \GL_n. \end{equation}
Proceed similarly with arrows $\xymatrix{ \overset{n}{\bullet} & \overset{m}{\circ} \ar[l] }$ going from an unmarked arrow to a marked one.
\end{proof}

\subsubsection*{Step Two}

Let $Q$ be a quiver, $v \in \N^{Q_0}$ a dimension vector and $\infty \in Q_0$ a distinguished vertex with $v_{\infty}=1$. Put $M:= Q_0 \wo \{ \infty \}$ and choose a character $\chi \in \Char(\GL_{v,M})$. Replace $M$ by the full set $Q_0$ and observe that the set of (semi)stable points and the respective quotients do not change, if the character is suitably extended to a character of $\GL_v$. This is indicated in the following diagram.

\begin{equation} \begin{array}{llll} \xymatrix{ \overset{v_1}{\bullet} \ar@/^/[d] \ar@/_/[d] &  \\
            \underset{v_2}{\bullet} & \underset{1}{\circ} \ar[l]} 
& \begin{matrix} \\ \\ \\\mapsto \\ \end{matrix} &  & 

\xymatrix{ \overset{v_1}{\bullet} \ar@/^/[d] \ar@/_/[d] &  \\
            \underset{v_2}{\bullet} & \underset{1}{\bullet} \ar[l]}  \\ & & &\\

\chi=(\chi_1, \chi_2) & \mapsto & & \tilde{\chi}=(\chi_1,\chi_2, -\chi_1 v_1 - \chi_2 v_2) 
\end{array}
\end{equation}

To be more precise, define the character $\tilde{\chi} \in \Char(\GL_v)$ by \begin{equation} \tilde{\chi}_{\infty}:= - \sum_{i \in M} \chi_i v_i, \; \; \tilde{\chi}_{i}:= \chi_i \; \text{f\"ur } i \in M.\end{equation}

\begin{lemma}\label{step2}
The $\chi$-(semi)stable points in $\Rep_{Q}^v$ (with respect to the action of $\GL_{v,M}$) are the  $\tilde{\chi}$-(semi)stable points in $\Rep_{Q}^v$ (with respect to the action of $\GL_{v}$), and the corresponding quotients agree.
\end{lemma}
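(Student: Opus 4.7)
The plan is to show that the two graded rings of semi-invariants coincide termwise,
\[ k[\Rep_Q^v]^{\GL_{v,M}}_{\chi^n} = k[\Rep_Q^v]^{\GL_v}_{\tilde{\chi}^n} \quad \text{for every } n \geq 1, \]
from which both claims of the lemma follow at once: semistability is detected by the existence of a nonvanishing semi-invariant of some positive power of the character, and the GIT quotient is by definition the Proj of this graded algebra.

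First I would exploit the product decomposition $\GL_v = \GL_{v,M} \times \GL_1$, where the $\GL_1$-factor sits at the distinguished vertex $\infty$. The inclusion $\supseteq$ is then trivial, since the restriction of $\tilde{\chi}$ to $\GL_{v,M}$ is $\chi$. For the reverse inclusion, the key observation is that the diagonal subgroup $\Delta \cong \GL_1$ of $\GL_v$, embedded as $\lambda \mapsto (\lambda \cdot \id_{v_i})_{i \in Q_0}$, acts trivially on $\Rep_Q^v$ because $\lambda\, r_\alpha\, \lambda^{-1} = r_\alpha$ for every edge $\alpha$. Given $f \in k[\Rep_Q^v]^{\GL_{v,M}}_{\chi^n}$ and $R \in \Rep_Q^v$, I would write $R = (\lambda \cdot \id_{v,M}, \lambda) \cdot R$ and apply $\chi^n$-semi-invariance under the factor $(\lambda \cdot \id_{v,M}, 1) \in \GL_{v,M}$, obtaining
\[ f(R) = \chi(\lambda \cdot \id_{v,M})^n f\bigl((1,\lambda) \cdot R\bigr) = \lambda^{n \sum_{i \in M} v_i \chi_i} f\bigl((1,\lambda) \cdot R\bigr) = \lambda^{-n\tilde{\chi}_\infty} f\bigl((1,\lambda) \cdot R\bigr). \]
Rearranging produces $f((1,\lambda) \cdot R) = \lambda^{n\tilde{\chi}_\infty} f(R)$, which is exactly the transformation law $f$ must obey under the extra $\GL_1$-factor in order to qualify as a $\tilde{\chi}^n$-semi-invariant on $\GL_v$. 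Together with the given $\GL_{v,M}$-semi-invariance this places $f$ in $k[\Rep_Q^v]^{\GL_v}_{\tilde{\chi}^n}$.

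Once the two graded rings are identified, the equality of the semistable loci and of the quotients
\[ (\Rep_Q^v)^{ss}(\chi)//\GL_{v,M} = \Proj \bigoplus_{n \geq 0} k[\Rep_Q^v]^{\GL_{v,M}}_{\chi^n} = \Proj \bigoplus_{n \geq 0} k[\Rep_Q^v]^{\GL_v}_{\tilde{\chi}^n} = (\Rep_Q^v)^{ss}(\tilde{\chi})//\GL_v \]
follows immediately. For the stability statement I would then compare the two remaining conditions in the definition of stability. Since the $\GL_1$-factor acts trivially, the orbits $\GL_v \cdot R$ and $\GL_{v,M} \cdot R$ coincide, so orbit-closedness on $\{ f \neq 0 \}$ is the same condition for both groups. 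Likewise $\ker(\GL_v \to \GL(\Rep_Q^v)) = \ker(\GL_{v,M} \to \GL(\Rep_Q^v)) \times \GL_1$ and analogously for stabilizers, so the dimension condition $\dim G \cdot R = \dim G/\Delta$ is equivalent for the two groups.

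The main obstacle will be the non-trivial inclusion $k[\Rep_Q^v]^{\GL_{v,M}}_{\chi^n} \subseteq k[\Rep_Q^v]^{\GL_v}_{\tilde{\chi}^n}$, where the precise definition $\tilde{\chi}_\infty = -\sum_{i \in M} \chi_i v_i$ has to be matched against the triviality of the diagonal action to force the correct $\GL_1$-weight on every $\chi^n$-semi-invariant. Once that weight computation is in place, the remaining comparisons of orbits, stabilizers and Proj constructions are purely formal.
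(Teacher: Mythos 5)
Your proof is correct, and it reaches the same destination as the paper by a slightly different packaging of the same underlying fact. The paper's proof is more compressed: it defines the retraction $\alpha: \GL_v \longrightarrow \GL_{v,M}$, $\bigl((g_m)_{m \in M}, z\bigr) \mapsto (z^{-1}g_m)_{m \in M}$, observes that the $\GL_v$-action on $\Rep_Q^v$ factors through $\alpha$ and that $\tilde{\chi} = \chi \circ \alpha$, whence the $\tilde{\chi}^k$-invariants equal the $\chi^k$-invariants, the orbits of the two groups coincide, and a morphism from any invariant open subset is $\GL_v$-invariant iff it is $\GL_{v,M}$-invariant (so the categorical quotients agree without even invoking the Proj description). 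Your argument unpacks the identity $\tilde{\chi} = \chi \circ \alpha$ into an explicit weight computation using the trivially acting diagonal torus, which is the same mechanism made concrete; it buys nothing extra but is arguably more transparent about why $\tilde{\chi}_\infty = -\sum_{i \in M}\chi_i v_i$ is the unique correct choice. Two small imprecisions you should repair: (i) the factor $\{1\} \times \GL_1$ at the vertex $\infty$ does \emph{not} act trivially on $\Rep_Q^v$ (it scales the arrows meeting $\infty$); the orbits $\GL_v \cdot R$ and $\GL_{v,M}\cdot R$ coincide because $\GL_v = \GL_{v,M}\cdot\Delta$ with the \emph{diagonal} $\Delta$ acting trivially, i.e.\ $(1,z)\cdot R = (z^{-1}\id,1)\cdot R$ --- the conclusion is right but the stated reason is not. (ii) The kernel of $\GL_v \to \GL(\Rep_Q^v)$ is $\alpha^{-1}$ of the kernel for $\GL_{v,M}$, which in general is not the direct product of that kernel with $\GL_1$ as a subgroup (e.g.\ for a single arrow into $\infty$ it is the diagonal $\{(z\id,z)\}$, not $\{(\id,z)\}$); only the dimension count $\dim\ker_{\GL_v} = \dim\ker_{\GL_{v,M}} + 1$ is what you need, and that is correct. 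Neither point affects the validity of the argument once rephrased.
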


\begin{proof}The crucial remark is that the action of $\GL_v$ on $\Rep_Q^v$ is induced by the action of $\GL_{v, M}$ and the morphism $\alpha: \GL_v \longrightarrow \GL_{v,M}, \; \left( (g_m)_{m \in M}, z \right) \mapsto \left(z^{-1}g_m \right)_{m \in M}$. Furthermore, we have $\tilde{\chi} = \chi \circ \alpha$. Therefore the $\tilde{\chi}^k$-invariant functions are exactly the $\chi^k$-invariant functions. It follows immediately that the respective subsets of (semi)stable points coincide. 

Now let $U \subset \Rep_Q^v$ be any $\GL_v$- (and hence $\GL_{v,M}$)-invariant open subset. A morphism $U \longrightarrow Y$ into any variety $Y$ is $\GL_v$-invariant if and only if it is $\GL_{v,M}$-invariant. Therefore the quotient $U//\GL_v$ exists if and only if the quotient $U//\GL_{v,M}$ exists, in which case they agree.
\end{proof}

Putting both steps together, we reduce the general setup, given by a quiver $Q$, a dimension vector $v$, and a set of marked vertices $M \subset Q_0$ to the situation where the full group $\GL_{\tilde{v}}$ acts on the representation space $\Rep_{\tilde{Q}}^{\tilde{v}}$. \\

Now we are able to formulate versions of the theorems cited above for the general situation.

\begin{corollary}\label{general_quiver_stability}Let $Q$ be a quiver, $v \in \N^{Q_0}$ a dimension vector, and $M \subset Q_0$ a subset of marked vertices. Fix a character $\chi \in \Char\bigl(\GL_{v,M}\bigr)$. 

 A representation $R=(r_{\alpha})_{\alpha \in Q_1} \in \Rep_Q^v$ is $\chi$-semistable if and only if 
\begin{enumerate}
\item all subrepresentations $S=(U, \psi)$ of $R$ with $U_m = \{ 0 \}$ for all $m \notin M$ satisfy $\langle \chi, S \rangle_{_M} \geq 0$;
\item all subrepresentations $S=(U, \psi)$ of $R$ with $U_m = k^{v_m}$ for all $m \notin M$ satisfy $\langle \chi, S \rangle_{_M} \geq \langle \chi, R \rangle_{_M}$.
\end{enumerate}
 The representation is $\chi$-stable, if and only if the inequalities are strict whenever these subrepresentations are proper.
\end{corollary}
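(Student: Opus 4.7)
The plan is to apply the two reduction steps in sequence and then invoke King's theorem (Theorem \ref{quiver_stability}) in the reduced situation. First I would use Step One to pass to the equivariant isomorphism $\Phi: \Rep_Q^v \iso \Rep_{\tilde{Q}}^{\tilde{v}}$, where all unmarked vertices are collapsed to a single vertex $\infty$ with $\tilde{v}_\infty = 1$. Then Step Two (Lemma \ref{step2}) lets me replace $M$ by $\tilde{Q}_0$ and the character $\chi$ by its extension $\tilde{\chi}$ with $\tilde{\chi}_\infty := - \sum_{i \in M} \chi_i v_i$, without changing the loci of (semi)stable points. In this reduced setup, Theorem \ref{quiver_stability} directly applies, and it remains to translate the resulting conditions back to the language of the original data $(Q,v,M,\chi)$.

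The main computational content is the translation of subrepresentations. Since $\tilde{v}_\infty = 1$, any subrepresentation $\tilde{S} \subset \tilde{R}$ satisfies $(\dim \tilde{S})_\infty \in \{0,1\}$. A direct inspection of Step One shows that subrepresentations with $(\dim \tilde{S})_\infty = 0$ correspond precisely to subrepresentations $S = (U,\psi)$ of $R$ with $U_m = \{0\}$ for all $m \notin M$: the condition $A_k(\tilde{S}_i) \subset \tilde{S}_\infty = 0$ on each row functional $A_k$ associated to an edge $A: i \to j$ to an unmarked vertex is equivalent to $A(U_i) \subset U_j = 0$. Analogously, subrepresentations with $(\dim \tilde{S})_\infty = 1$ correspond to subrepresentations of $R$ with $U_m = k^{v_m}$ for all $m \notin M$, since the inclusion $A_k(\tilde{S}_i) \subset k$ is automatic and translates to $A(U_i) \subset U_j = k^{v_j}$.

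It remains to compare the pairings. The equality $\langle \tilde{\chi}, \tilde{R} \rangle_{\tilde{Q}_0} = 0$ required by King's criterion holds automatically, since by construction
\[ \langle \tilde{\chi}, \tilde{R} \rangle_{\tilde{Q}_0} = \sum_{i \in M} v_i \chi_i + 1 \cdot \tilde{\chi}_\infty = 0. \]
For a subrepresentation of the first type ($(\dim \tilde{S})_\infty = 0$) one has $\langle \tilde{\chi}, \tilde{S} \rangle_{\tilde{Q}_0} = \langle \chi, S \rangle_{_M}$, so King's inequality $\langle \tilde{\chi}, \tilde{S} \rangle_{\tilde{Q}_0} \geq 0$ reduces to condition (1). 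For a subrepresentation of the second type one computes
\[ \langle \tilde{\chi}, \tilde{S} \rangle_{\tilde{Q}_0} = \langle \chi, S \rangle_{_M} + \tilde{\chi}_\infty = \langle \chi, S \rangle_{_M} - \langle \chi, R \rangle_{_M}, \]
so King's inequality becomes condition (2). The stable case is handled identically, noting that a subrepresentation of $\tilde{R}$ is proper in $\tilde{Q}$ if and only if the corresponding subrepresentation of $R$ is proper in $Q$.

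The main obstacle is the careful bookkeeping in the subrepresentation correspondence when there are several unmarked vertices of different dimensions in the original quiver: one must check that only the two "extreme" configurations ($U_m = 0$ for all $m \notin M$, or $U_m = k^{v_m}$ for all $m \notin M$) arise from subrepresentations of $\tilde{R}$, and that intermediate configurations are irrelevant to stability because they do not appear after applying Steps One and Two. Once this is granted, the numerical identification of the pairings is a short computation as above.
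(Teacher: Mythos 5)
Your proposal is correct and follows essentially the same route as the paper: reduce via Steps One and Two, observe that a subrepresentation of $\Phi(R)$ can only have $\tilde{S}_\infty \in \{0,k\}$ so that exactly the two extreme configurations at the unmarked vertices occur, compute $\langle \tilde{\chi}, \tilde{S} \rangle_{\tilde{Q}_0} = \langle \chi, S \rangle_{_M} - \langle \chi, R \rangle_{_M} \dim \tilde{S}_\infty$, and conclude by Theorem \ref{quiver_stability}. The bookkeeping you flag as the main obstacle is handled exactly as you describe, so no gap remains.
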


\begin{proof}Let $(\tilde{Q}, \tilde{v}, \tilde{M}, \tilde{\chi})$ be the data obtained by applying the reduction procedure. We have a natural isomorphism $\Phi: \Rep_{v,Q} \iso \Rep_{\tilde{v}, \tilde{Q}}$, and we have to check whether the representation $\Phi(R)$ is $\tilde{\chi}$-(semi)stable with respect to the $\GL_{\tilde{v}}$-action. Recall that all unmarked vertices in $Q$ are collapsed to a single one in $\tilde{Q}$, denoted by $\infty$,  of dimension $1$. By Theorem \ref{quiver_stability} we have to consider subrepresentations  of $\Phi(R)$. At the vertex $\infty$, it is either the null space $\{ 0 \}$ or the full space $k$. Indeed, the subrepresentations of $S$ of $R$ that assign either the null space or the full space $k^{v_m}$ to all the unmarked vertices $m \notin M$ correspond bijectively to the subrepresentations $\tilde{S}$ of $\Phi(R)$. Furthermore 
\begin{equation}\begin{split}
\langle \tilde{\chi}, \tilde{S} \rangle_{\tilde{Q}_0} &= \sum_{m \in M} \chi_m \dim \tilde{S}_m + \tilde{\chi}_{\infty} \dim \tilde{S}_{\infty} \\
&= \langle \chi, S \rangle_{M} - \langle \chi, R \rangle_M \dim \tilde{S}_{\infty}.
\end{split}\end{equation}
In particular $\langle \tilde{\chi}, \Phi(R) \rangle_{\tilde{Q}_0} = 0$. Now the statement is an immediate consequence of Theorem \ref{quiver_stability}.
\end{proof}

To formulate Theorem \ref{procesi} in the more general setup, call a cycle marked if it starts (and ends) at a marked vertex. If we are given a path connecting two unmarked vertices in $(Q,M)$ it  defines a map $\Rep_Q^v \longrightarrow \Hom(k^{v_{s(\alpha_1)}}, k^{v_{t(\alpha_s)}})$. Composing with the natural coordinate functions on the latter space, it induces $v_{s(\alpha_1)} v_{t(\alpha_s)}$ natural regular functions on $\Rep_Q^v$. The paths connecting unmarked vertices correspond to $v_{s(\alpha_1)} v_{t(\alpha_s)}$ cycles starting (and ending) at $\infty$ in the quiver $\tilde{Q}$. Under the isomorphism $\Rep_{Q}^v \cong \Rep_{\tilde{Q}}^{\tilde{v}}$ the traces of the latter cycles correspond exactly to the coordinate functions of the former paths. Hence we obtain the following corollary (this result has been obtained by Halic and Stupariu in \cite{halic:04}):

\begin{corollary}[Halic, Stupariu]\label{general_procesi}
Let $Q$ be a quiver, $v \in \N^{Q_0}$ a dimension vector, and $M \subset Q_0$ a subset of marked vertices. Fix a character $\chi \in \Char\bigl(\GL_{v,M}\bigr)$. 

The ring of invariants $k[\Rep_Q^v]^{\GL_{v,M}}$  is generated by traces of marked cycles and by coordinate functions of paths with unmarked source and tail. 
 In particular, the quotient space of $\chi$-semistable representations by $\GL_{v,M}$ is projective if and only if there are no cycles and no paths with unmarked source and tail. 
\end{corollary}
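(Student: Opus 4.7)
The plan is to reduce the statement to Theorem \ref{procesi} by means of the two-step construction developed just before Corollary \ref{general_quiver_stability}, and then to translate the resulting generators back to the original quiver $Q$ via the equivariant isomorphism $\Phi \colon \Rep_Q^v \iso \Rep_{\tilde{Q}}^{\tilde{v}}$ produced in Step One.

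First I would apply Step One to construct the auxiliary pair $(\tilde{Q},\tilde{v})$ in which a single unmarked vertex $\infty$ of dimension $1$ remains, and then Step Two to replace $\chi$ by a character $\tilde{\chi}$ of the full group $\GL_{\tilde{v}}$. As shown in the proof of Lemma \ref{step2}, the pullback $\Phi^\ast$ identifies the $\GL_{v,M}$-invariant regular functions on $\Rep_Q^v$ with the $\GL_{\tilde{v}}$-invariant regular functions on $\Rep_{\tilde{Q}}^{\tilde{v}}$. Theorem \ref{procesi} then asserts that the latter ring is generated by traces of oriented cycles in $\tilde{Q}$, so it only remains to describe how these cycle-trace generators look under $\Phi^\ast$.

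The key combinatorial step is to classify the oriented cycles in $\tilde{Q}$ according to how often they visit $\infty$. Cycles that avoid $\infty$ live entirely within $M \subset Q_0$ and coincide with marked cycles of $Q$; their traces correspond under $\Phi^\ast$ to the traces of the same marked cycles in $\Rep_Q^v$. For a cycle visiting $\infty$ at least once, I would cut it at each occurrence of $\infty$; since $v_\infty = 1$, the trace factors as a product over these cuts, so it suffices to treat cycles touching $\infty$ exactly once. By the construction in Step One, such a cycle is a path in $Q$ from some unmarked vertex $j$ to some unmarked vertex $j'$ together with a choice of one of the $v_j$ replacement edges at the source and one of the $v_{j'}$ replacement edges at the tail; under $\Phi^\ast$ the trace in $\Rep_{\tilde{Q}}^{\tilde{v}}$ is precisely the matching matrix-coefficient function of that path in $\Rep_Q^v$. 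Assembling these two cases yields the claimed list of generators.

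For the projectivity assertion I would quote the fact, noted right after the construction of $\V^{ss}(\chi)//G$, that this quotient is projective over $k$ exactly when $k[\Rep_Q^v]^{\GL_{v,M}} = k$; by the first part, triviality of the invariant ring is equivalent to the non-existence of both marked cycles and paths with unmarked source and tail. I expect the main obstacle to be the bookkeeping in the third paragraph: keeping track of orientations, verifying that the multiplicity $v_{s(\alpha_1)} v_{t(\alpha_s)}$ of replacement edges introduced by Step One is matched exactly by the family of coordinate functions of the corresponding path in $Q$, and checking that the trace at $\infty$ behaves multiplicatively under concatenation so that only cycles visiting $\infty$ once need be retained as generators.
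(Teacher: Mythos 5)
Your proposal is correct and follows essentially the same route as the paper: reduce via Steps One and Two to the case $M=\tilde{Q}_0$, invoke Theorem \ref{procesi} on $\tilde{Q}$, and identify cycles avoiding $\infty$ with marked cycles of $Q$ and cycles through $\infty$ (whose traces factor as products over the visits to $\infty$ since $v_\infty=1$) with coordinate functions of paths having unmarked source and tail. Your explicit treatment of cycles visiting $\infty$ more than once fills in a point the paper leaves implicit, but the argument is the same.
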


For an example consider the space of linear dynamical systems: Fix $(n,m,p) \in \N_{>0} \times \N^2$. A linear dynamical system of type $(n,m,p)$ is a triple $\Sigma = (A,B,C,D)$ of matrices $A \in k^{n \times n}, \, B \in k^{n \times m}$,  $C \in k^{p \times n}$, and $D \in k^{p \times m}$. 
The affine space of all linear dynamical systems of type $(n,m,p)$, which we will denote by $\tSigma_{n,m,p}$, is the representation space $\Rep^{(n,m,p)}_Q$ of the quiver $Q$ described by the following diagram
\begin{equation} \xymatrix{ \overset{n}{\bullet \ar@(ul,dl)[]_{A}} \ar[d]_{C} & \circ \, \scriptstyle {m} \ar[l]_{B} \ar[dl]^{D} \\ \underset{p}{\circ} & } \end{equation}
In control theory, linear dynamical systems are studied up to change of basis in the state space $k^n$. This is the action of $\GL_n \subset \GL_{n,m,p}$ on the representation space we obtain by marking only the inner vertex. \\

We want to identify the respective sets of $\chi$-(semi)stable points using Corollary \ref{general_quiver_stability}. This is surprisingly simple and thus shows the power of this machinery. Byrnes and Hurt in \cite{bh:79} first used geometric invariant theory to construct and study these quotients. Later Tannenbaum \cite{tannenbaum:91} did the same. See also \cite[Chapter 7]{moore:94}. There is a recent article \cite[in particular Proposition 4.1 and Lemma 5.4.1]{geiss:06}, where a different proof of the following proposition involving the controllability matrix and the Hilbert-Mumford criterion of stability is given and the graded ring $\oplus_{ k \geq 0} k [ \tSigma_{n,m,p}]_{\det^k}^{\GL_n}$ is described. See \cite{corfra:03} for the notions of controllability and observability of linear dynamical systems.

\begin{proposition}[Byrnes, Hurt]Let $\chi \in \Z \cong \Char(\GL_n)$ be a character. 
\begin{enumerate}
\item When $\chi > 0$, then a point $\Sigma \in \tSigma_{n,m,p}$ is $\chi$-stable if and only if it is $\chi$-semistable if and only if it is controllable.
\item When $\chi=0$, then every system $\Sigma \in \tSigma_{n,m,p}$ is $\chi$-semistable. It is $\chi$-stable if and only if it is controllable and observable.
\item When $\chi<0$, then a system $\Sigma \in \tSigma_{n,m,p}$ is $\chi$-stable if and only if it is $\chi$-semistable if and only if it is observable.
\end{enumerate}
\end{proposition}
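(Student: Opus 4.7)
The plan is to apply Corollary \ref{general_quiver_stability} directly to the quiver $Q$ of linear dynamical systems with its single marked vertex (the $n$-dimensional state vertex), and then translate the resulting numerical criteria into the classical invariant-subspace characterizations of controllability and observability.

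First I would unpack what a subrepresentation $S = (U_n, U_m, U_p)$ of $R = (A,B,C,D)$ looks like. The compatibility conditions read $A(U_n) \subset U_n$, $B(U_m) \subset U_n$, $C(U_n) \subset U_p$, and $D(U_m) \subset U_p$. The corollary only sees subrepresentations of two kinds: those with $U_m = 0$ and $U_p = 0$, which correspond exactly to $A$-invariant subspaces $U_n \subset \ker C$; and those with $U_m = k^m$ and $U_p = k^p$, which correspond exactly to $A$-invariant subspaces $U_n$ containing $\im B$. Since the only marked vertex carries weight $\chi$, one has $\langle \chi, S\rangle_{_M} = \chi\dim U_n$ and $\langle \chi, R\rangle_{_M} = \chi n$. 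I would also recall the standard facts (cf.\ \cite{corfra:03}) that $(A,B)$ is controllable iff the only $A$-invariant subspace containing $\im B$ is $k^n$, and $(C,A)$ is observable iff the only $A$-invariant subspace contained in $\ker C$ is $\{0\}$.

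Next I would split into three cases according to the sign of $\chi$ and read off the numerical conditions of Corollary \ref{general_quiver_stability}. For $\chi > 0$, condition (1) is automatic, and condition (2), which demands $\chi\dim U_n \geq \chi n$ for every $A$-invariant $U_n \supset \im B$, forces $U_n = k^n$; this is controllability. For stability the same condition with strict inequality is impossible unless no proper subrepresentation of type (2) exists, which is again controllability. For $\chi < 0$ the roles swap: condition (2) becomes automatic while condition (1), $\chi\dim U_n \geq 0$ for every $A$-invariant $U_n \subset \ker C$, forces $U_n = 0$; this is observability, and the same analysis shows stability and semistability coincide. For $\chi = 0$ every inequality is trivial, so every system is semistable; a proper subrepresentation of either type then obstructs stability, so stability demands both observability (no proper subrep of type (1)) and controllability (no proper subrep of type (2)).

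The only slightly delicate point is handling the definition of a \emph{proper} subrepresentation correctly in each case, since on an unmarked vertex the dimension of $S$ is forced to be either $0$ or the full value, and properness reduces to a condition on $\dim U_n$ only. Beyond that, the argument is a direct bookkeeping exercise, and the main conceptual obstacle is simply recognizing that the two types of subrepresentations singled out by the corollary are precisely the $A$-invariant subspaces that witness failure of observability and of controllability, respectively. Once this dictionary is set up, the three cases fall out immediately.
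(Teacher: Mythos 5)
Your proposal is correct and follows essentially the same route as the paper: apply Corollary \ref{general_quiver_stability}, observe that the only relevant subrepresentations are $A$-invariant subspaces $U\subset k^n$ either containing $\im B$ (with full spaces at the unmarked vertices) or contained in $\ker C$ (with zero spaces there), compute $\langle\chi,S\rangle_{_M}=\chi\dim U$, and case-split on the sign of $\chi$. The only cosmetic difference is that you cite the invariant-subspace characterizations of controllability and observability as standard, whereas the paper derives them in two lines from $\sum_{j\ge 0}\im A^jB=k^n$ via $U:=\sum_{j\ge 0}\im A^jB$.
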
 

\begin{proof}
Let $\Sigma=(A,B,C,D) \in \Rep_Q^v$ be a representation. A subrepresentation $S$ of $R$ is given by a triple of subspaces $U \subset k^n$, $V \subset k^p$, and $W \subset k^m$, such that $A(U) \subset U$, $B(W) \subset U$, $D(W) \subset V$, and $C(U) \subset V$. We have to consider the cases a) $V=k^p$ and $W=k^m$ and b) $V=W=\{0\}$. The first situation corresponds to finding an $A$-invariant subspace $U \subset k^n$, such that $\im B \subset U$. The second situation means finding an $A$-invariant subspace $U \subset k^n$, such that $\ker C \supset U$.
 Since $\langle \chi, S \rangle_{_M} = \chi \dim U $ and $\langle \chi, R \rangle_{_M} = \chi n$, we derive the following two conditions for $\chi$-semistability from Corollary \ref{general_quiver_stability}:
\begin{enumerate}
\item $\bigl( U \subsetneq k^n, \, A(U) \subset U, \, \im B \subset U \bigr)\implies \chi \dim U \geq \chi n$;
\item $\bigl( U \neq 0, \, A(U) \subset U, \, \ker C \supset U \bigr) \implies \chi \dim U \geq 0$.
\end{enumerate} 
Replace the inequalities by strict ones to obtain the stability criteria.
When $\chi=0$, then all representations $\Sigma$ are $\chi$-semistable. In the other cases all semistable points are stable and the stability conditions translate to
\begin{enumerate}
\item $\nexists \, U \subsetneq k^n$ with $A(U) \subset U$ and $\im B \subset U$, when $\chi>0$;
\item $\nexists \, 0 \neq U \subset k^n$ with $A(U) \subset U$ and $\ker C \supset U$, when $\chi<0$.
\end{enumerate}

As a consequence, when $\chi>0$, a representation  $\Sigma=(A,B,C,D)$ is $\chi$-(semi)stable if and only if 
\begin{equation} \textstyle\sum_{j \geq 0} \im A^jB = k^n, \text{ i.e. if and only if } \Sigma \text{ is controllable.} \end{equation}
To prove this, let $\Sigma$ be a $\chi$-stable system. Put $U:= \sum_{j \geq 0} A^jB$. Then $A(U) \subset U$ and $\im B \subset U$. The $\chi$-stability now implies $U = k^n$. Hence $\Sigma$ is controllable.
On the other hand, choose $\Sigma$ controllable. Let $U\subset k^n$ be any subspace with $A(U) \subset U$ and $\im B \subset U$. Then necessarily $\sum_{j \geq 0} \im A^jB \subset U$, whence $U = k^n$ by the controllability.
Therefore the $\chi$-(semi)stable points for any $\chi>0$ are exactly the controllable linear dynamical systems.

 Analogously, when $\chi<0$, a representation $\Sigma=(A,B,C,D)$ is $\chi$-(semi)stable if and only if
\begin{equation} \cap_{j \geq 0} \ker CA^j = \{ 0 \}. \end{equation}
Therefore the $\chi$-(semi)stable points for any $\chi<0$ are exactly the observable linear dynamical systems.

For $\chi=0$, we see that the $\chi$-stable points are those linear dynamical systems that are both controllable and observable.
\end{proof}

The respective sets $\tSigma_{n,m,p}^c$ and $\tSigma_{n,m,p}^o$ of controllable and observable systems are thus the stable loci of the action of $\GL_{n}$ with respect to the characters $\pm 1$. We write
\begin{equation} \Sigma_{n,m,p}^c := \tSigma_{n,m,p}^c//\GL_{n} \text { and } \Sigma_{n,m,p}^o:= \tSigma_{n,m,p}^o \end{equation}
for the respective quotients. 

\begin{corollary}The moduli spaces $\Sigma_{n,m,p}^c$ and $\Sigma_{n,m,p}^o$ of controllable and observable linear dynamical systems are smooth irreducible algebraic varieties of dimension $n(m+p)+pm$. They are non-projective and the quotient maps
\[ \tSigma_{n,m,p}^c \longrightarrow \Sigma_{n,m,p}^c \text{ and } \tSigma_{n,m,p}^o \longrightarrow \Sigma_{n,m,p}^o \]
are principal $\GL_n$-bundles.
\end{corollary}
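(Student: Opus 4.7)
The plan is to read the corollary off the GIT machinery already in place. By the preceding proposition, $\tSigma_{n,m,p}^c$ and $\tSigma_{n,m,p}^o$ are the stable loci for the characters $\det^{\chi}$ of $\GL_n$ with $\chi > 0$ and $\chi < 0$ respectively, so the corresponding GIT geometric quotients exist as varieties. What remains is to (i) identify the quotient maps as principal $\GL_n$-bundles, from which (ii) smoothness, irreducibility and the dimension formula follow formally, and to (iii) verify non-projectivity via the existence of non-constant invariants.

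The heart of the argument is (i): one has to verify that $\GL_n$ acts freely on $\tSigma_{n,m,p}^c$ and on $\tSigma_{n,m,p}^o$, after which Theorem \ref{luna} upgrades the geometric quotient to a Zariski-locally trivial principal $\GL_n$-bundle. For a controllable system $\Sigma=(A,B,C,D)$ and $g \in \GL_n$ with $g \cdot \Sigma = \Sigma$, the identities $gAg^{-1}=A$ and $gB=B$ iterate to $gA^jB=A^jB$ for every $j \geq 0$, so $g$ fixes $\sum_{j\geq 0}\im A^jB = k^n$, whence $g=\id$. The observable case is dual: $gA=Ag$ together with $Cg^{-1}=C$ yields $CA^j(g-\id)=0$ for all $j$, and $\bigcap_{j\geq 0}\ker CA^j = \{0\}$ forces $g=\id$. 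This freeness check is the only step that is not pure bookkeeping.

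Granted (i), step (ii) is automatic. The ambient space $\tSigma_{n,m,p} = k^{n\times n}\times k^{n\times m}\times k^{p\times n}\times k^{p\times m}$ is smooth and irreducible of dimension $n^2 + n(m+p) + pm$, and its open subsets $\tSigma_{n,m,p}^c$ and $\tSigma_{n,m,p}^o$ inherit both properties. Since a principal $\GL_n$-bundle is a smooth surjective morphism with $n^2$-dimensional fibres, the base spaces $\Sigma_{n,m,p}^c$ and $\Sigma_{n,m,p}^o$ are smooth, irreducible, and of dimension $n(m+p)+pm$.

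Finally, the quiver displayed in diagram (\ref{i2}) carries the marked loop labelled $A$ at the state-space vertex, which constitutes a marked oriented cycle. Corollary \ref{general_procesi} therefore produces non-constant $\GL_n$-invariant polynomials on $\tSigma_{n,m,p}$, namely the traces of powers of $A$, so $k[\tSigma_{n,m,p}]^{\GL_n}\supsetneq k$. By the criterion recalled after Theorem \ref{luna}, the quotient $\Proj \bigoplus_{\ell \geq 0} k[\tSigma_{n,m,p}]^{\GL_n}_{\det^\ell}$ is projective over $\Spec k[\tSigma_{n,m,p}]^{\GL_n}$ but not over $k$; the same observation applies on $\tSigma_{n,m,p}^o$. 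The main obstacle is the freeness verification in step (i); everything else is a direct application of the tools developed in this section.
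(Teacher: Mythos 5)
Your proposal is correct and follows essentially the same route as the paper: apply Theorem \ref{luna} after checking that the stabilizer of a controllable (resp.\ observable) system is trivial, which you do via the spanning of $\sum_{j}\im A^jB$ (equivalently, the full rank of the controllability matrix) and its dual for observability. The extra details you supply on dimension, smoothness, and non-projectivity via the marked cycle $A$ are correct and merely make explicit what the paper leaves implicit.
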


\begin{proof}We apply Theorem \ref{luna}. All we have to prove is that the stabilizer of $\GL_n$ of a controllable (observable) system $\Sigma$ is trivial. Let $\Sigma=(A,B,C,D)$ be a controllable system. Then its controllability matrix $R(\Sigma)=[B, AB, \ldots, A^{n-1}B]$ has rank $n$. Let $g \in \GL_n$ be an element of the stabilizer of $\Sigma$, i.e. $g\Sigma = \Sigma$. Then $R(\Sigma) = R(g\Sigma) = gR(\Sigma)$. Therefore $g=\id_n$. Proceed analogously to prove the statement for observable systems. 
\end{proof}

%%% Local Variables: 
%%% mode: latex
%%% TeX-master: t
%%% End:

\section{The Lomadze compactification}
 
\subsection{Lomadze systems}

Fix non-negative integers $n,m,p \in \N$ with $n>0$. The Lomadze compactification arises from generalizing the equations (\ref{lds}) to
\begin{equation}
Kw_{t+1} +Lw_t + M\xi_t=0,
\end{equation}
with matrices $K,L \in k^{(n+p) \times n}$ and $M \in k^{(n+p) \times (p+m)}$. Therefore we make the following definition:

\begin{definition}A Lomadze system of type $(n,m,p)$ is a triple $(K,L,M)$ consisting of matrices $K,L \in k^{(n+p) \times n}$ and $M \in k^{(n+p) \times (p+m)}$.
\end{definition}

We denote the vector space of all Lomadze systems by $\tL_{n,m,p}$. The reductive linear algebraic group $\GL_{n, n+p} = \GL_n \times \GL_{n+p}$ acts on $\tL_{n,m,p}$ as follows:
\begin{equation} (g_0,g_1). (K,L,M) = (g_1Kg_0^{-1}, g_1Lg_0^{-1}, g_1M) \text{ for } (g_0,g_1) \in \GL_{n,n+p} \text{ and } (K,L,M) \in \tL_{n,m,p}. \end{equation}

In \cite{lomadze:90}, Lomadze introduced notions of controllability and observability for systems $S \in \tL_{n,m,p}$:  

\begin{definition}\label{lc}A system $S = (K,L,M) \in \tL_{n,m,p}$ is called controllable, if it satisfies  
\begin{enumerate}
\item $\rk [ s K + t L] =n$ for some $(s, t) \in k^2$; 
\item $\rk [ s K + t L, M ] = n+p$ for all $0 \neq (s, t) \in k^2$. 
\end{enumerate} 
The system $S$ is called observable, if and only if
\begin{enumerate}
\item $\rk [s K + t L] = n$ for all $ 0 \neq (s, t) \in k^2$;
\item $\rk [s K + t L, M ] = n+p$ for some $(s, t) \in k^2$.
\end{enumerate}
We write $\tL_{n,m,p}^c$  and $\tL_{n,m,p}^o$ for the respective sets of controllable and observable systems. 

A system $S=(K,L,M) \in \tL_{n,m,p}$ is called regular, if $\rk[K, M_p]= n+p$ where $M_p \in k^{(n+p) \times p}$ consists of the first $p$ columns of the matrix $M$. We denote the space of regular systems with $\tL_{n,m,p}^{\reg}$.
\end{definition}

\begin{remark}The subsets of controllable and observable Lomadze systems are open and invariant with respect to the $\GL_{n,n+p}$-action.
\end{remark}

There is a natural map $\Phi_L: \tSigma_{n,m,p} \longrightarrow \tL_{n,m,p}$ that maps a system $(A,B,C,D) \in \tSigma_{n,m,p}$ to the Lomadze system $(K,L,M)$ defined by
\begin{equation} K = \begin{pmatrix} \Id_n \\ 0 \end{pmatrix}, \, L= \begin{pmatrix} A \\ C \end{pmatrix}, \, M= \begin{pmatrix} 0 & B \\ \Id_p & D \end{pmatrix}. \end{equation}

Furthermore, there is a natural inclusion of groups $\varphi_L: \GL_n \longrightarrow \GL_{n,n+p}, \; g_0 \mapsto (g_0, g_0 \oplus id_p)$. 

\begin{lemma}\label{inclusion}The map $\Phi_L: \tSigma_{n,m,p} \longrightarrow \tL_{n,m,p}$ is a closed embedding and it is $\varphi_L$-equivariant, i.e. for all $\Sigma \in \tSigma_{n,m,p}$ and for all $g \in \GL_n$ 
\[ \Phi_L(g. \Sigma) = \varphi_L(g). \Phi_L(\Sigma). \]
Furthermore: 
\begin{enumerate}
\item Given a regular system $S \in \tL_{n,m,p}$, there exists $g \in \GL_{n,n+p}$, such that $g.S \in \im \Phi_L$;
\item Given a system $\Sigma \in \tSigma_{n,m,p}$ and $g \in \GL_{n,n+p}$ with $g \Phi_L(\Sigma) \in \im \Phi_L$, then $g = \varphi_L(g_0)$ for some $g_0 \in \GL_n$.
\item A system $\Sigma \in \tSigma_{n,m,p}$ is controllable (observable) if and only if $\Phi_L(\Sigma)$ is controllable (observable).
\end{enumerate}

\end{lemma}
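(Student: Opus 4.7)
The lemma packages five assertions: the closed-embedding claim, the $\varphi_L$-equivariance, and items (1)--(3). The first four are essentially forced by linearity of $\Phi_L$ and by block-matrix normalization; the substantive content lies in (3). My plan would be to handle each piece in turn, leaving the control-theoretic translation for last.

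For the closed-embedding and equivariance claim, I would observe that a triple $(K,L,M) \in \tL_{n,m,p}$ lies in $\im \Phi_L$ precisely when the linear equations $K = \binom{\Id_n}{0}$ and $M_p = \binom{0}{\Id_p}$ hold, where $M_p$ denotes the first $p$ columns of $M$. Hence $\im \Phi_L$ is a closed linear subvariety of $\tL_{n,m,p}$, and $\Phi_L$ is a linear isomorphism onto it, given by placing $A,C$ as the two blocks of $L$ and $B,D$ as the last $m$ columns of $M$. The $\varphi_L$-equivariance then reduces to the identities $(g_0 \oplus \Id_p) \binom{\Id_n}{0} g_0^{-1} = \binom{\Id_n}{0}$ and $(g_0 \oplus \Id_p) \binom{0}{\Id_p} = \binom{0}{\Id_p}$, together with a one-line block-matrix check that the remaining blocks transform as $A \mapsto g_0 A g_0^{-1}$, $B \mapsto g_0 B$, $C \mapsto C g_0^{-1}$, $D \mapsto D$.

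For item (1), regularity says that $[K, M_p] \in \GL_{n+p}$. Taking $g_0 = \Id_n$ and $g_1 := [K, M_p]^{-1}$ simultaneously normalizes $K$ and $M_p$ to their required forms, placing $(g_0, g_1) \cdot S$ in $\im \Phi_L$. For item (2), write $g_1$ as a $2 \times 2$ block matrix with block sizes $n$ and $p$; the equations $g_1 \binom{\Id_n}{0} g_0^{-1} = \binom{\Id_n}{0}$ and $g_1 \binom{0}{\Id_p} = \binom{0}{\Id_p}$ force, block by block, $g_1 = g_0 \oplus \Id_p = \varphi_L(g_0)$.

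The real work is item (3). A direct computation gives
\[ sK + tL = \begin{pmatrix} sI_n + tA \\ tC \end{pmatrix}, \qquad [sK + tL, M] = \begin{pmatrix} sI_n + tA & 0 & B \\ tC & I_p & D \end{pmatrix}. \]
Using the $I_p$-block in the middle block column, column operations clear $tC$ and $D$, yielding $\rk[sK + tL, M] = p + \rk[sI_n + tA, B]$. The existential conditions of Definition \ref{lc} are automatic at $(s,t) = (1,0)$, because $K$ has rank $n$ and $[K, M]$ has rank $n+p$. For $t \neq 0$, rescaling the first $n$ columns by $t^{-1}$ and setting $\lambda = -s/t$ converts the universal Lomadze conditions into the Popov--Belevitch--Hautus rank tests for $\Sigma$: $\rk[A - \lambda \Id_n, B] = n$ for every $\lambda \in k$ (controllability), and $\rk \binom{A - \lambda \Id_n}{C} = n$ for every $\lambda \in k$ (observability). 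The main obstacle is exactly this bookkeeping --- distinguishing which part of the Lomadze pencil is automatically full-rank because of the normalized blocks of $K$ and $M_p$, and which part captures the genuine Hautus content --- but once the explicit block form is written down, the matching is mechanical.
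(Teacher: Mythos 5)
Your proposal is correct and carries out exactly the ``straightforward calculations'' that the paper itself defers to (the published proof is only a citation to the author's Diplomarbeit): the block-matrix normalization for the embedding, equivariance, and items (1)--(2), and the column-reduction $\rk[sK+tL,M]=p+\rk[s\Id_n+tA,B]$ for item (3), are all right. The only point worth making explicit is that your reduction lands on the Popov--Belevitch--Hautus rank tests, whereas the paper defines controllability by $\sum_{j\geq 0}\im A^jB=k^n$ and observability by $\bigcap_{j\geq 0}\ker CA^j=\{0\}$, so you should either cite or include the standard equivalence of the Hautus criterion with these definitions over an algebraically closed field.
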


\begin{proof}This follows from straightforward calculations. See \cite[Lemma 3.14]{diplomarbeit} for details.
\end{proof}

The following statement has been proved by Helmke for $p=0$ in \cite[Lemma 6.4]{helmke:92}. The case $p>0$ is implicit in \cite{HRS:97}, where a reachability matrix for Lomadze systems is defined. The proof given here follows closely these references and is included for the reader's convenience. 

\begin{proposition}\label{stabilizer}Let $S=(K,L,M) \in \tL_{n,m,p}$ be a controllable or observable Lomadze system. Then the stabilizer of the $\GL_{n,n+p}$-action at the point $S$ is trivial.
\end{proposition}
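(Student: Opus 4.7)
Let $(g_0, g_1) \in \GL_n \times \GL_{n+p}$ be a stabiliser of $S = (K, L, M)$, so that $g_1 K = K g_0$, $g_1 L = L g_0$, and $g_1 M = M$. Setting $h := g_1 - I_{n+p}$ and $k := g_0 - I_n$, these equations become $hK = Kk$, $hL = Lk$, and $hM = 0$; the aim is to force $h = 0 = k$. I will first exploit the $\GL_2$-action on the pencil, $(K, L) \mapsto (aK+bL,\, cK+dL)$, which commutes with the $\GL_{n,n+p}$-action on $(K, L, M)$ and preserves both Definition~\ref{lc} and the stabiliser: in the controllable case Definition~\ref{lc}(1) supplies $(s, t) \in k^2$ with $\rk(sK+tL) = n$, so after applying a suitable element of $\GL_2$ I may assume $\rk K = n$, fix a left inverse $P$ with $PK = I_n$, and set $A := -PL$.

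The main step is to translate Definition~\ref{lc}(2) into a polynomial-matrix statement. The condition $\rk[s_0 K + L, M] = n+p$ for every $s_0 \in k$ is, by Smith normal form over $k[s]$, exactly the statement that the polynomial matrix $[sK+L, M] \in k[s]^{(n+p) \times (n+p+m)}$ admits a polynomial right inverse $Q(s) = \sum_{j=0}^d Q_j s^j$ with $[sK+L,M]\, Q(s) = I_{n+p}$. Decomposing $Q_j = \binom{Q_j^{\mathrm{top}}}{Q_j^{\mathrm{bot}}}$ with $Q_j^{\mathrm{top}} \in k^{n \times (n+p)}$ and comparing coefficients of $s^j$ yields $L Q_0^{\mathrm{top}} + M Q_0^{\mathrm{bot}} = I_{n+p}$, the relations $K Q_{j-1}^{\mathrm{top}} + L Q_j^{\mathrm{top}} + M Q_j^{\mathrm{bot}} = 0$ for $1 \leq j \leq d$, and $K Q_d^{\mathrm{top}} = 0$; the last of these gives $Q_d^{\mathrm{top}} = 0$ by injectivity of $K$. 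Applying $h$ to these identities, using $hK = Kk$, $hL = Lk$, $hM = 0$, and then left-multiplying by $P$, produces $h = L k Q_0^{\mathrm{top}}$ together with the recursion $k Q_{j-1}^{\mathrm{top}} = A \cdot k Q_j^{\mathrm{top}}$ for $1 \leq j \leq d$. A descending induction starting from $Q_d^{\mathrm{top}} = 0$ propagates $k Q_j^{\mathrm{top}} = 0$ for every $j$; in particular $k Q_0^{\mathrm{top}} = 0$, so $h = 0$. Then $g_1 = I_{n+p}$, and the equation $g_1 K = K g_0$ together with the injectivity of $K$ forces $g_0 = I_n$.

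The observable case is dual. Definition~\ref{lc}(1) gives $\rk(s_0 K + L) = n$ for every $s_0 \in k$, so $sK+L \in k[s]^{(n+p) \times n}$ admits a polynomial left inverse $T(s)$; combined with Definition~\ref{lc}(2), which supplies some $(s_0, t_0)$ with $\rk[s_0 K + t_0 L, M] = n+p$, a parallel propagation argument applied to $U(s) := T(s) h$ --- which satisfies $U(s)(sK+L) = k$ and $U(s) M = 0$ --- forces $k = 0$ and hence $h = 0$. The hardest part of the argument will be the polynomial-matrix dictionary between the Lomadze rank conditions in Definition~\ref{lc} and the Smith normal form delivering the polynomial right (respectively left) inverse of the pencil; once that bridge is in place, the propagation along the inverse's coefficient identities is essentially the polynomial-matrix analogue of the classical reachability/observability computation used in the proof of the Byrnes--Hurt Proposition above.
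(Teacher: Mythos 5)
Your argument is correct, but it takes a genuinely different route from the paper. The paper also begins with the $\GL_2$-normalization of the pencil (packaged, together with a column permutation of $M$, as the equivariant automorphisms $T_{\Omega,h}$), but it then uses this only to reduce to a \emph{regular} system, i.e.\ one in the $\GL_{n,n+p}$-orbit of $\im \Phi_L$; Lemma~\ref{inclusion} then shows that any stabilizing element is of the form $\varphi_L(g_0)$ with $g_0$ stabilizing a controllable (or observable) classical system $\Sigma \in \tSigma_{n,m,p}$, and triviality follows from the classical controllability-matrix argument $gR(\Sigma)=R(\Sigma)$. You instead stay entirely on the Lomadze side and run a direct computation: the Smith-normal-form dictionary converts the rank condition $\rk[s_0K+L,M]=n+p$ for all $s_0$ into a polynomial right inverse $Q(s)$, and the descending propagation $kQ_{j-1}^{\mathrm{top}}=A\,kQ_j^{\mathrm{top}}$ starting from $KQ_d^{\mathrm{top}}=0$ kills $h$ and then $k$. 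This is self-contained (it does not need Lemma~\ref{inclusion} or the notion of regularity) and is in fact closer to the reachability-matrix idea of Helmke--Rosenthal--Schumacher that the paper cites as the source of its own proof; the price is the polynomial-matrix bookkeeping, whereas the paper's reduction is shorter given the infrastructure already built. One detail to nail down in your observable case: for the parallel propagation to start, the leading relation $U_eK=0$, $U_eM=0$ must force $U_e=0$, which requires $\rk[K,M]=n+p$; so you should first apply a second $\GL_2$-normalization moving the pair $(s_0,t_0)$ supplied by Definition~\ref{lc}(2) to $(1,0)$ (this preserves condition (1), which holds for \emph{all} nonzero $(s,t)$, so the left inverse $T(s)$ of $sK+L$ survives). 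With that adjustment both cases close up correctly.
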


\begin{proof}First, let us introduce an $\GL_{n,n+p}$-equivariant automorphisms on the space of all Lomadze systems:  To an invertible matrix $\Omega= \left( \begin{smallmatrix} \alpha & \beta \\ \gamma & \delta \end{smallmatrix} \right) \in \GL_2$ and a matrix $h \in \GL_{p+m}$ , we associate the transformation
\[ T_{\Omega, h} : \tL_{n,m,p} \longrightarrow \tL_{n,m,p}, \; (K,L,M) \mapsto (\alpha K + \beta L, \gamma K + \delta L, Mh). \]

We now claim the following: Let $S \in \tL_{n,m,p}$ be a Lomadze system, such that there exists $(s,t) \in k^2$ verifying both $\rk [sK + tL] = n$ and $\rk [sK + tL,M]= n+p$. Then there exists $\Omega \in \GL_2$ and $h \in \GL_{m+p}$, such that $T_{\Omega, h}(S)$ is regular.
 
To prove this claim, take $(s,t)$ as in the assumption. Then there certainly exist numbers $u,v \in k$, such that $\Omega:=\left( \begin{smallmatrix} s & t \\ u & v \end{smallmatrix} \right)$ is invertible. Now we choose $h \in \GL_{p+m}$ to be a permutation matrix, such that $Mh$  consists of the same columns as $M$, but reordered in such a way that the first $p$ columns together with the columns of $[sK + tL]$ span the whole space $k^{n+p}$. Then $T_{\Omega, h}(S)$ is regular.

Notice that every controllable or observable system verifies the assumption of the preceeding claim. Since $T_{\Omega, h}$ is a $\GL_{n,n+p}$-equivariant isomorphism, a system $S$ has trivial stabilizer if and only if $T_{\Omega, h}(S)$ has trivial stabilizer. Therefore we have reduced the statement to the case of regular systems. So assume that $S=(K,L,M) \in \tL_{n,m,p}$ is a regular controllable or observable system. Then it is equivalent to a system $S'=\Phi_L(\Sigma)$ for some $\Sigma \in \tSigma_{n,m,p}$. Now let $g=(g_0,g_1) \in \GL_{n,n+p}$ lie in the stabilizer of $S'$. Since $g S' = S' \in \im \Phi_L$, we can apply Lemma \ref{inclusion} and conclude that $g=\varphi_L(g_0)$ for some $g \in \GL_n$. But using  $gS'=  \varphi_L(g_0) \Phi_L(\Sigma) = \Phi_L(g\Sigma)$, it follows from $gS' = S'$ that $g\Sigma = \Sigma$. The system $\Phi_L(\Sigma)$ is controllable (resp. observable) if and only if $\Sigma$ is controllable (resp. observable). But the stabilizer of a controllable or observable system $\Sigma \in \tSigma_{n,m,p}$ in $\GL_n$ is trivial. Therefore $g_0= \id_n$. Hence $S'$ has trivial stabilizer. But since the stabilizers of $S$ and of $S'$ are conjugate also $S'$ has trivial stabilizer. 
\end{proof}

\subsection{GIT quotients of Lomadze systems}

We would like to construct the quotients $\tL_{n,m,p}^c // \GL_{n,n+p}$ and $\tL_{n,m,p}^o // \GL_{n,n+p}$ using geometric invariant theory. In order to do this, we need to exhibit the respective subsets of controllable and observable Lomadze systems as the stable loci of some characters of the group $\GL_{n,n+p}$.

Let $Q_L$ be the following quiver:

\begin{equation} \xymatrix{ \stackrel{n}{\bullet} \ar@/^/[d]^L \ar@/_/[d]_K & \\ 
              \underset{n+p}{\bullet} & \underset{p+m}{\circ} \ar[l]_M } \end{equation}

Notice that the space of representations of dimension $(n, n+p, p+m)$ of this quiver is the space of Lomadze systems:  $\Rep_{Q_L}^{(n,n+p,p+m)} = \tL_{n,m,p}$. Since the group $\GL_{n,n+p}$ acts on $\tL_{n,m,p}$ by change of basis on the respective vertices, we are able to apply the formalism of the first section of this article. The first question we ask is: How many different stability notions are there? We had associated to every character $(\chi_0, \chi_1) \in \Z^2 \cong \Char(\GL_{n,n+p})$ a stability condition. We will give an elementary proof that there are only finitely many different stability conditions by grouping together characters that actually have coinciding (semi)stable loci. 

\begin{lemma}Let $(\chi_0,\chi_1) \in \Z^2 \cong \Char(\GL_{n, n+p})$ be a character. If  $(\chi_0,\chi_1) \in \Z^2$ satisfies one of the following three inequalities:
\[ \chi_0>0,\; \chi_1<0, \;n\chi_0 + (n+p)\chi_1<0,\]
 then there are no $(\chi_0,\chi_1)$-semistable systems in $\tL_{n,m,p}$. 
\end{lemma}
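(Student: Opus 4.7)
My plan is to apply Corollary \ref{general_quiver_stability} to the quiver $Q_L$, whose two marked vertices carry dimensions $n$ and $n+p$ and whose unique unmarked vertex carries dimension $p+m$. For any system $R=(K,L,M) \in \tL_{n,m,p}$ one has $\langle \chi, R \rangle_{M} = n\chi_0 + (n+p)\chi_1$. In each of the three cases I will exhibit a single subrepresentation $U=(U_0,U_1,U_2)$, valid for every $R$, that violates one of the two semistability inequalities in the corollary; this immediately forces the non-existence of $\chi$-semistable systems under the respective hypothesis.

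The guiding observation is that a triple $U=(U_0,U_1,U_2)$ is automatically a subrepresentation of any $R$ as soon as either $U_1 = k^{n+p}$ is the full codomain, or $U_0 = 0$ and $U_2 = 0$, because in both situations the containments $K(U_0), L(U_0) \subset U_1$ and $M(U_2) \subset U_1$ are trivial. I will use exactly three such ``dimensional'' witnesses, one per inequality, chosen so that the subrepresentation is the same regardless of the maps $K,L,M$.

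For $\chi_1 < 0$ I will take $U=(0,\,k^{n+p},\,0)$: since $U_2=0$ at the unmarked vertex, condition (1) of the corollary applies and forces $(n+p)\chi_1 \geq 0$, contradicting $\chi_1<0$. For $n\chi_0 + (n+p)\chi_1 < 0$ I will take $U=(k^n,\,k^{n+p},\,0)$, again with $U_2=0$; condition (1) then reads $n\chi_0 + (n+p)\chi_1 \geq 0$, a direct contradiction. For $\chi_0 > 0$ I will take $U=(0,\,k^{n+p},\,k^{p+m})$; here $U_2$ is the full unmarked space, so condition (2) requires $(n+p)\chi_1 \geq n\chi_0 + (n+p)\chi_1$, i.e.\ $n\chi_0 \leq 0$, which contradicts $\chi_0>0$ since $n>0$.

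I do not expect any real obstacle: the entire argument is a direct application of Corollary \ref{general_quiver_stability} once one notices these three purely dimensional witnesses, and the only routine verification is that each proposed triple genuinely is a subrepresentation, which is immediate from the extremal choices above.
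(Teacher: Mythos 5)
Your proof is correct and follows essentially the same route as the paper: the paper also applies Corollary \ref{general_quiver_stability} and uses exactly the witnesses $(U,V)=(0,k^{n+p})$ for the cases $\chi_0>0$ and $\chi_1<0$ (against the second and first condition, respectively) and $(U,V)=(k^n,k^{n+p})$ for $n\chi_0+(n+p)\chi_1<0$. Your explicit bookkeeping of the unmarked vertex ($U_2=0$ versus $U_2=k^{p+m}$) is just a more careful spelling-out of the same argument.
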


\begin{proof}
To test $(\chi_0,\chi_1)$-stability of a system $S=(K,L,M) \in \tL_{n,m,p}$, we need to consider vector subspaces $U \subset k^n$ and $V \subset k^{n+p}$. The criteria of Corollary \ref{general_quiver_stability} read as follows:
\begin{itemize}
\item if $K(U) + L(U) \subset V$, then $\chi_0 \dim U + \chi_1 \dim V \geq 0$;
\item if $K(U) + L(U) \subset V$ and $\im M \subset V$, then $\chi_0 \dim U + \chi_1 \dim V \geq \chi_0 n + \chi_1 (n+p)$.
\end{itemize} 

Put $U:=\{0\}$ and $V:=k^{n+p}$. Clearly $K(U) + L(U) \subset V$ and $\im M \subset V$. If $\chi_0 >0$, this contradicts the second inequality, if $\chi_1 < 0$, it contradicts the first one.
Now assume that $n \chi_0 + (n+p) \chi_1 < 0$. Put $U:=k^n$ and $V:=k^{n+p}$ to obtain a contradiction of the first inequality.
\end{proof}

Notice furthermore that a point $S=(K,L,M)$ is $(\chi_0, \chi_1)$-(semi)stable, if and only if it is $(l \chi_0, l \chi_1)$-(semi)stable for any $l \in \N_{>0}$. Therefore stability with respect to the character $(\chi_0, \chi_1)$ only depends on the fraction $- \frac{\chi_0}{\chi_1} \in [0, \infty]_{\Q}:= \{ q \in \Q \vbar q \geq 0 \} \cup \{ \infty \}$. 

Let $\chi \in [0, \infty]_{\Q}$. We will say that a point $S=(K,L,M)$ is $\chi$-(semi)stable if it is $(\chi_0, \chi_1)$-(semi)stable for some (and hence for every) character $(\chi_0,\chi_1) \in \Z_{\leq 0} \times \Z_{\geq 0}$ with $\chi = - \frac{\chi_0}{\chi_1}$.

Before proceeding, let us spell out the criteria of Corollary \ref{general_quiver_stability} in this situation: Let $\chi \in [0,\infty]_{\Q}$. A system $S=(K,L,M)$ is $\chi$-semistable, if and only if for all subspaces $U\subset k^n$ and $V \subset k^{n+p}$ the following two conditions hold:
\begin{enumerate}
\item if $\left( U \neq \{0 \} \text{ or } V \neq \{0 \} \right)$ and $K(U) + L(U) \subset V$, then $\chi \leq \frac{ \dim V } { \dim U}$.
\item if $\left( U \subsetneq k^n \text{ or } V \subsetneq k^{n+p} \right)$, $K(U) + L(U) \subset V$, and $\im M \subset V$, then $\chi \geq \frac{n+p - \dim V}{n - \dim U}$.
\end{enumerate}
Again, to obtain the stability criteria, replace the inequalities by strict ones. Let $\chi \in [0, \infty]_{\Q}$ and assume that the corresponding stable and semistable loci differ, i.e. that there exists a system that is $\chi$-semistable, but not $\chi$-stable. It follows from the criteria above that $\chi = \frac{v}{u}$ for some $ 0 \leq v \leq n+p$ and $0 \leq u \leq n$. Therefore we see that all parameters $\chi \in [0, \infty]_{\Q}$ admitting $\chi$-semistable but not $\chi$-stable points are contained in the finite set
\begin{equation} \Lambda:= \left\{ \frac{v}{u} \in [0, \infty]_{\Q} \vbar 0 \leq v \leq n+p, \; 0 \leq u \leq n \right\}.\end{equation}

\begin{definition}Let $\lambda \in \Lambda$. A system $S=(K,L,M)$ is called $\lambda$-lowerstable (respectively $\lambda$-upperstable), if for all subspaces $U \subset k^n$ and $V \subset k^{n+p}$ the following hold:
\begin{enumerate}
\item if $\left( \dim U >0 \text{ or } \dim V > 0 \right)$ and $K(U) +L(U) \subset V$, then 
 \[ \lambda < \frac{\dim V}{\dim U} \left( \text{respectively }\lambda \leq \frac{ \dim V}{\dim U} \right); \]
\item if $\left( U \subsetneq k^n \text{ or } V \subsetneq k^{n+p} \right)$, $K(U) + L(U) \subset V$, and $\im M \subset V$, then 
\[ \lambda \geq \frac{ n+p - \dim V}{n - \dim U} \left(\text{respectively } \lambda > \frac{ n+p - \dim V}{n - \dim U} \right). \]
\end{enumerate}
\end{definition}

Notice that the notion of $\lambda$-upper(lower)stability is a compromise between $\lambda$-semistability and $\lambda$-stability. A priori, this is not a GIT-stability condition. 

With every parameter $\lambda \in \Lambda$ we associate the following two intervals in $[0, \infty]_{\Q}$:
\begin{enumerate}
\item $\Delta_{\lambda}^{-}:= \left \{ \chi \in [0, \infty]_{\Q}  \vbar  \chi > \lambda \text{ and } \chi < \lambda' \text{ for all } \lambda<\lambda' \in \Lambda \right\}$.
\item $\Delta_{\lambda}^{+}:= \left \{ \chi \in [0, \infty]_{\Q}  \vbar  \chi < \lambda \text{ and } \chi > \lambda' \text{ for all } \lambda<\lambda' \in \Lambda \right\}$.
\end{enumerate}

If we write $\Lambda =\{ \lambda_1, \lambda_2, \ldots, \lambda_s\}$ and order the parameters such that $0 \leq \lambda_1 < \lambda_2 < \ldots < \lambda_s \leq \infty$ holds, then $\Delta_{\lambda_i}^- = \Delta_{\lambda_{i+1}}^+$ and these intervals are the connected components of $[0, \infty]_{\Q} - \Lambda$.

\begin{proposition}\label{h_chambers}Let $\lambda \in \Lambda$ and let $S=(K,L,M) \in \tL_{n,m,p}$. Let $\chi \in \Delta_\lambda^+$ (respectively in $\Delta_\lambda^-$). Then the following statements are equivalent:
\begin{enumerate}
\item $S$ is $\lambda$-upperstable (respectively $\lambda$-lowerstable);
\item $S$ is $\chi$-stable;
\item $S$ is $\chi$-semistable.
\end{enumerate}
\end{proposition}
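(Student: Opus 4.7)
The plan is to observe that every ratio appearing in the definitions of $\chi$-(semi)stability, $\lambda$-upperstability, and $\lambda$-lowerstability lies in the finite set $\Lambda$, and then to reduce the claim to a tautology about how $\chi \in \Delta_\lambda^\pm$ compares with elements of $\Lambda$. Concretely, for every pair of subspaces $U \subset k^n$, $V \subset k^{n+p}$ with $K(U)+L(U) \subset V$, the two ratios $\dim V/\dim U$ and $(n+p-\dim V)/(n-\dim U)$ (taken in $[0,\infty]_{\Q}$ when a denominator vanishes) are elements of $\Lambda$, and each condition entering the three stability notions is an inequality between $\chi$ or $\lambda$ and one of these ratios.

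First I would record the following dictionary. By the definition of $\Delta_\lambda^+$, one has $\chi < \lambda$ together with $\chi > \lambda''$ for every $\lambda'' \in \Lambda$ lying below $\lambda$; in particular $\chi \notin \Lambda$. Therefore, for every $q \in \Lambda$, both $\chi \leq q$ and $\chi < q$ are equivalent to $\lambda \leq q$, while $\chi \geq q$ and $\chi > q$ are both equivalent to $\lambda > q$. Symmetrically, for $\chi \in \Delta_\lambda^-$, both $\chi \leq q$ and $\chi < q$ amount to $\lambda < q$, and both $\chi \geq q$ and $\chi > q$ amount to $\lambda \geq q$.

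Then I would apply this dictionary termwise to the criteria of Corollary \ref{general_quiver_stability}. For $\chi \in \Delta_\lambda^+$, the semistability inequality $\chi \leq \dim V/\dim U$ from condition (a) and the corresponding stability inequality $\chi < \dim V/\dim U$ both reduce to $\lambda \leq \dim V/\dim U$, which is precisely condition (a) of $\lambda$-upperstability. Analogously, the semistability and stability versions of condition (b) both become $\lambda > (n+p-\dim V)/(n-\dim U)$, which is condition (b) of upperstability. The analogous translation with the $\Delta_\lambda^-$ dictionary turns both versions of (a) into $\lambda < \dim V/\dim U$ and both versions of (b) into $\lambda \geq (n+p-\dim V)/(n-\dim U)$, yielding $\lambda$-lowerstability. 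The proof is thus essentially combinatorial bookkeeping; the only subtlety, and the place where I expect to spend a moment, is treating the degenerate cases where $\dim U=0$ or $\dim U=n$, which are handled uniformly by observing that the resulting value $\infty$ still lies in $\Lambda$, so the dictionary applies without change.
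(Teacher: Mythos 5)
Your proof is correct and rests on the same key observation as the paper's: every threshold ratio appearing in the (semi)stability criteria of Corollary \ref{general_quiver_stability} lies in the finite set $\Lambda$, while $\chi$ lies in the open gap of $[0,\infty]_{\Q}\setminus\Lambda$ adjacent to $\lambda$, so strict and non-strict comparisons with $\chi$ collapse and translate into the upper/lower-stability inequalities for $\lambda$. The paper organizes this as a cycle of implications $(1)\Rightarrow(2)\Rightarrow(3)\Rightarrow(1)$ rather than as your termwise dictionary, but the mathematical content is identical.
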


\begin{proof}We prove the statement for $\chi \in \Delta_\lambda^+$. The proof for $\chi \in \Delta_\lambda^-$ is analogous. Fix $\chi \in \Delta_{\lambda}^+$.  Assume $S$ to be $\lambda$-upperstable. Let $U \subset k^n$ and $V \subset k^{n+p}$ be vector subspaces. Assume that $K(U) + L(U) \subset V$. If $\dim U + \dim V > 0$, then by assumption $\lambda < \frac{\dim V}{\dim U} \in \Lambda$. Therefore $\chi < \frac{\dim V}{\dim U}$. 

Now assume that $K(U) + L(U) \subset V$, $\im M \subset V$, and $\dim U + \dim V < 2n +p$. Then $\chi > \lambda \geq \frac{n+p - \dim V}{n - \dim U}$. Therefore $S$ is $\chi$-stable. 

Every $\chi$-stable system is $\chi$-semistable. So let us prove the last implication. Assume that $S$ is $\chi$-semistable. Again, given subspaces $U \subset k^n$ and $V \subset k^{n+p}$, we have to distinguish two cases: First, if $\dim U + \dim V > 0$ and $K(U) + L(U) \subset V$, then $\lambda < \chi  \leq \frac{\dim V}{\dim U}$. Secondly, if $\dim U + \dim V < 2n +p$ and $K(U) + L(U) \subset V \supset \im M$, then $\chi \geq \frac{n+p - \dim V}{n - \dim U} =: \lambda' \in \Lambda$. Therefore $\lambda \geq \lambda'$. This proves that $S$ is $\lambda$-upperstable.
\end{proof}
 
We see that the notion of $\chi$-stability is preserved when the parameters $\chi$ vary within an interval $\Delta_\lambda^+$: for every $\chi \in \Delta_\lambda^+$ a system $S$ is $\chi$-stable if and only if it is $\chi$-semistable if and only if it is $\lambda$-upperstable. Therefore the condition of being $\lambda$-upperstable is a GIT-stability condition. And there are only finitely many GIT-stability conditions.

\begin{corollary}Let $\lambda \in \Lambda$. A system $S \in \tL_{n,m,p}$ is $\lambda$-stable if and only if it is $\lambda$-upper- and $\lambda$-lowerstable.
\end{corollary}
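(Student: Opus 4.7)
The plan is to unwrap each of the three conditions---$\lambda$-stability, $\lambda$-upperstability, and $\lambda$-lowerstability---into systems of inequalities indexed by pairs of subspaces $(U,V) \subset k^n \times k^{n+p}$ subject to the quiver subrepresentation constraints, and to observe that the conjunction of the latter two literally rewrites into the first.

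First I would instantiate at $\chi = \lambda$ the criterion for $\chi$-stability displayed just above the definition of $\lambda$-upper/lowerstability. For $S = (K,L,M)$ this produces two families of strict inequalities: one of the form $\lambda < \dim V / \dim U$ (whenever $K(U)+L(U) \subset V$ and $\dim U + \dim V > 0$), and one of the form $\lambda > (n+p-\dim V)/(n-\dim U)$ (whenever $K(U)+L(U) \subset V$, $\im M \subset V$, and $(U,V) \neq (k^n, k^{n+p})$).

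Next I would place these side by side with the definitions in the preceding paragraph: $\lambda$-lowerstability keeps the strict $<$ in the first family but weakens the second to $\geq$, whereas $\lambda$-upperstability does exactly the reverse. Consequently, the conjunction of the two inherits the strict $<$ from lowerstability and the strict $>$ from upperstability, reproducing the $\lambda$-stability inequalities on the nose. The reverse implication is immediate because a strict inequality implies its non-strict counterpart. Thus the biconditional holds essentially by inspection.

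The only subtlety, and hardly a real obstacle, is the edge case $\dim U = 0$ or $\dim U = n$, where one of the two ratios is undefined. All three definitions treat this uniformly---either the vacuous hypothesis ($U = 0$ or $U = k^n$) suppresses the relevant clause entirely, or one reads the missing ratio as $+\infty$, turning the inequality into a tautology---so no incompatibility arises from this boundary behaviour. Conceptually, upperstability strengthens the ``upper'' weak bound of semistability to a strict one and lowerstability does the symmetric job for the ``lower'' bound, and $\lambda$-stability is precisely the meet of the two.
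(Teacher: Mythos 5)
Your proof is correct and is precisely the argument the paper has in mind: the paper states this corollary without proof because, as you observe, $\lambda$-stability is by definition the conjunction of the strict versions of both inequalities, while upper- and lowerstability each make exactly one of them strict, so their conjunction reproduces stability on the nose. Your remark on the degenerate cases $\dim U \in \{0,n\}$ is a sensible precaution, and since all three definitions treat these uniformly it does not affect the equivalence.
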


\begin{proposition}\label{stability_generalization}Let $\Sigma \in \tSigma_{n,m,p}^c$ be a linear dynamical system and let  $\Phi_L: \tSigma_{n,m,p} \longrightarrow \tL_{n,m,p}$ be the closed embedding defined above.
\begin{enumerate}
\item The system $\Sigma \in \tSigma_{n,m,p}$ is $-1$-stable if and only if $\Phi_L(\Sigma) \in \tL_{n,m,p}$ is $1$-lowerstable.
\item The system $\Sigma \in \tSigma_{n,m,p}$ is $1$-stable if and only if $\Phi_L(\Sigma) \in \tL_{n,m,p}$ is $1$-upperstable.
\end{enumerate}
\end{proposition}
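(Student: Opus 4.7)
The plan is to translate the $1$-lower- and $1$-upper-stability conditions for $\Phi_L(\Sigma) = (K,L,M)$ into explicit linear-algebraic conditions on $(A,B,C,D)$, and match them with the controllability and observability criteria already obtained for linear dynamical systems via Corollary \ref{general_quiver_stability}. A direct computation gives, for any $U \subset k^n$, that $K(U) = U \oplus 0$ and $L(U) = \{(Au,Cu) : u \in U\}$ inside $k^n \oplus k^p$, while $\im M = B(k^m) \oplus k^p$. Hence any $V \subset k^{n+p}$ containing $K(U) + L(U)$ satisfies $\dim V \geq \dim U + \dim C(U)$, and if moreover $\im M \subset V$, then $V \supset (U + A(U) + B(k^m)) \oplus k^p$, so $\dim V \geq \dim (U + A(U) + B(k^m)) + p$.

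I would first dispatch the two automatic halves of the four conditions. Condition (1) of $1$-upperstability ($\dim V \geq \dim U$) is immediate from $U \oplus 0 = K(U) \subset V$. Condition (2) of $1$-lowerstability ($\dim V \geq p + \dim U$) follows at once from the bound above, once one observes that the edge case $U = k^n$ cannot occur: $K(k^n) + \im M$ already equals $k^{n+p}$, which forces $V = k^{n+p}$ and renders the hypothesis vacuous. What remains are the two nontrivial conditions: condition (1) of $1$-lowerstability for part (1), and condition (2) of $1$-upperstability for part (2).

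For part (1), condition (1) of $1$-lowerstability demands $\dim V > \dim U$ whenever $\dim U > 0$ and $K(U) + L(U) \subset V$. If $\Sigma$ is observable and this failed, then $U \oplus 0 \subset V$ together with $\dim V = \dim U$ would force $V = U \oplus 0$, and $L(U) \subset V$ would then translate into $A(U) \subset U$ and $C(U) = 0$, contradicting observability; conversely, any nonzero $A$-invariant $U \subset \ker C$ supplies the witness $V := U \oplus 0$ that breaks the condition. For part (2), condition (2) of $1$-upperstability demands $\dim V > p + \dim U$ for proper $U \subsetneq k^n$; by the dimension bound this amounts to $A(U) + B(k^m) \not\subset U$ for all proper $U$, precisely the controllability criterion from the proposition following Corollary \ref{general_procesi}, and the converse uses $V := U \oplus k^p$ as a witness whenever an $A$-invariant $U \subsetneq k^n$ with $B(k^m) \subset U$ exists. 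The main obstacle is the careful bookkeeping of strict versus non-strict inequalities from the definitions of $\lambda$-lower- and $\lambda$-upperstability, and the consistent interpretation of the ratios $\dim V/\dim U$ and $(n+p-\dim V)/(n-\dim U)$ in the degenerate cases $\dim U = 0$ and $U = k^n$, which is what ultimately routes observability into condition (1) and controllability into condition (2).
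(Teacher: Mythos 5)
Your proposal is correct and follows essentially the same route as the paper: identify $K(U)=U\oplus 0$, $L(U)=\{(Au,Cu)\}$, $\im M = \im B\oplus k^p$, observe that the non-strict halves of the two stability notions hold automatically for any $\Phi_L(\Sigma)$, and match the remaining strict inequalities with observability (via the witness $V=U\oplus 0$) and controllability (via $V=U\oplus k^p$). If anything, your write-up is slightly more complete than the paper's, which only treats part (1) in detail and dismisses part (2) as "completely analogous," whereas you spell out the controllability half explicitly.
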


\begin{proof}
Put $S:= \Phi_L(\Sigma)$. Write $S=(K,L,M)$, assume $\Sigma$ to be $-1$-stable, and choose vector subspaces $U\subset K^n$ and $V \subset k^{n+p}$, such that $K(U) + L(U) \subset V$. From the equality $K = \left(\begin{smallmatrix} \Id_n \\ 0 \end{smallmatrix} \right)$ we know that $U \subset V$ (once we have identified $k^n$ with its image in $k^n \oplus k^p = k^{n+p}$). Now $U=V$ is only possible when $U$ is trivial, since otherwise the fact that $L(U)= \left(\begin{smallmatrix} A \\ C \end{smallmatrix} \right) (U) \subset V$ implies that $A(U) \subset U$ and $\ker C \supset U$. This is excluded by the observability of $\Sigma$. If $U \subsetneq V$, then $- \dim U + \dim V >0$. Now assume additionally that $\im M \subset V$. From $M= \left(\begin{smallmatrix} 0 & B \\ \Id_p & D \end{smallmatrix}\right)$ we conclude that $U \oplus k^p \subset V$ and thus that $\dim V \geq \dim U + p$. Therefore $-(n-\dim U) + (n+p - \dim V) = \dim U + p - \dim V \leq 0$. Hence the system $S$ is $1$-lowerstable.

Now suppose $S$ to be $1$-lowerstable. If $\Sigma$ is not $-1$-stable, there exists a non trivial vector subspace $U \subset k^n$, such that $A(U) \subset U$ and $\ker C \supset U$. It follows that $K(U) + L(U) \subset U \subset k^{n+p}$. But $-\dim U + \dim U =0$, which is not possible if $S$ is $1$-lowerstable.

The proof of the second statement is completely analogous, so we omit it.
\end{proof}

Notice that the previous theorem relies only on geometric invariant theory. We started with stability conditions on the space of linear dynamical systems $\tSigma_{n,m,p}$ given by the characters $1$ and $-1$ of the group $\GL_n$. The question asked was, whether the open subset of $1$-stable ($-1$-stable) systems $\Sigma$ was mapped under the morphism $\Phi_L: \tSigma_{n,m,p} \longrightarrow \tS_{n,m,p}$ in an open subset of $\chi$-stable systems for some $\chi \in [0,\infty]_{\Q}$. We answered this question in the affirmative and were able to identify the set of characters $\chi$ that fulfill the requirement.

We were interested in the $1$-stable ($-1$-stable) systems $\Sigma \in \tSigma_{n,m,p}$ because these are exactly the controllable (observable) systems. Lomadze already introduced a notion of controllability (observability) for Lomadze systems, and it comes as no surprise that the controllable (obserable) Lomadze systems are exactly the $1$-upperstable ($1$-lowerstable) Lomadze systems. This will be shown in the following proposition. Let us stress that the strategy adopted here serves two purposes: It allows us to realize the moduli space of Lomadze systems as an algebraic variety, which is by no means trivial, but also it shows how controllability  conditions can be generalized using geometric invariant theory. It might be interesting to know whether the other stability notions also admit a control theoretic interpretation.

\begin{proposition}[Okonek, Teleman]\label{identification}A Lomadze system $S \in \tL_{n,m,p}$ is 
\begin{enumerate}
\item controllable if and only if it is $1$-upperstable;
\item observable if and only if it is $1$-lowerstable.
\end{enumerate}
\end{proposition}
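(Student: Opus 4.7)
The plan is to derive both equivalences by translating the subspace stability conditions from Corollary \ref{general_quiver_stability} applied to the Lomadze quiver $Q_L$ into the matrix-pencil language of Definition \ref{lc}. For $1$-upperstability of $S = (K, L, M)$ these conditions read: (A) if $U \subset k^n$ is nonzero and $K(U) + L(U) \subset V \subset k^{n+p}$, then $\dim V \geq \dim U$; (B) there is no $V \subsetneq k^{n+p}$ containing $\im K + \im L + \im M$; (C) for $U \subsetneq k^n, V \subsetneq k^{n+p}$ with $K(U) + L(U) \subset V$ and $\im M \subset V$, we have $\dim V > \dim U + p$. For $1$-lowerstability the strictness pattern reverses: (A') becomes strict ($\dim V > \dim U$) while (C') becomes non-strict ($\dim V \geq \dim U + p$).

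For the direction \emph{controllable $\Rightarrow$ $1$-upperstable}: (A) follows by picking $(s_0, t_0)$ from (i) so that $(s_0 K + t_0 L)|_U$ is injective; (B) is immediate from (ii). For (C) the induced pencil $\bar{K}, \bar{L} : k^n/U \to k^{n+p}/V$ satisfies $\bar M = 0$, so (ii) forces $s \bar K + t \bar L$ to be surjective for every $(s, t) \neq 0$, giving $\dim V \geq \dim U + p$. If equality held, the induced maps would be square of positive size and $\det(s \bar K + t \bar L)$ a nonzero homogeneous polynomial of positive degree in $(s, t)$; by algebraic closure of $k$ it has a nontrivial root, contradicting surjectivity and yielding the strict inequality. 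This is the first place algebraic closure enters.

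For the converse, (ii) is proved by assuming failure at some $(s_0, t_0) \neq 0$ with $\im(s_0 K + t_0 L) + \im M \subsetneq k^{n+p}$. After a change of basis in $k^2$ (which preserves (A), (B), (C)) we may assume $(s_0, t_0) = (0, 1)$; setting $V := \im L + \im M$ and $U := K^{-1}(V)$ one verifies $U \subsetneq k^n$ using (B), and a short dimension count using $\im K + V = k^{n+p}$ (again from (B)) gives $\dim U = \dim V - p$, contradicting the strict inequality in (C). The derivation of (i) is the delicate step. Assume $\rk(sK + tL) < n$ for every $(s, t) \neq 0$ and consider the kernel sheaf $\Kc := \ker \Phi$ of the vector bundle morphism
\[ \Phi : \Oc_{\Pb^1}^n \longrightarrow \Oc_{\Pb^1}(1)^{n+p}, \qquad \Phi = sK + tL. \]
Then $\Kc$ is torsion-free hence locally free, of positive rank, and by Grothendieck $\Kc \cong \bigoplus_i \Oc(a_i)$ with all $a_i \leq 0$. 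If some $a_i = 0$, a nonzero global section of $\Kc$ is a vector $u \in \ker K \cap \ker L$, and $U = ku, V = 0$ violates (A). Otherwise pick a summand of degree $-a \leq -1$; a suitably minimal polynomial section $v(s, t) = \sum_{i=0}^a v_i s^{a-i} t^i$ of its twist provides vectors satisfying $Kv_0 = 0, \, Kv_i + Lv_{i-1} = 0, \, Lv_a = 0$, and $U := \mathrm{span}(v_0, \ldots, v_a), V := \mathrm{span}(Lv_0, \ldots, Lv_{a-1})$ yields $\dim U = a + 1 > a \geq \dim V$, violating (A).

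The observability case is handled dually. The forward direction uses (i') to obtain $\dim V \geq \dim U$, with strict (A') again enforced by algebraic closure applied to $\det((sK+tL)|_U)$; (C') follows from (ii'), giving non-strict $\dim V \geq \dim U + p$ since only a single surjective $(s, t)$ is required. The reverse direction derives (i') directly from (A'), and (ii') by an analogous Grothendieck-decomposition argument applied to the cokernel sheaf of the combined morphism $\Oc(-1)^n \oplus \Oc^{p+m} \to \Oc^{n+p}$ incorporating $M$. \emph{The main obstacle} lies in the sheaf-theoretic step in the reverse direction: one must check that the Grothendieck summand of extremal degree really produces vectors with linear independence of both $v_0, \ldots, v_a$ and $Lv_0, \ldots, Lv_{a-1}$. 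This is the control-theoretic manifestation of Kronecker's theory of minimal indices for matrix pencils; while standard, verifying the minimality carefully is the technical heart of the argument.
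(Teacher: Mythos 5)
Your proof is correct, and for the observability half it follows essentially the same route as the paper: the easy implications are handled identically (the determinant of the induced square map $\varphi_{s,t}:U\to V$, which is a homogeneous form of positive degree in $(s,t)$ and hence has a nontrivial zero over the algebraically closed $k$; the surjectivity count on the induced map $k^n/U\to k^{n+p}/V$; and the kernel construction $U=\ker(s_0K+t_0L)$, $V=L(U)$). The genuine differences are two. First, the paper only proves statement (2) and cites Okonek--Teleman for statement (1), whereas you prove both; your treatment of controllability condition (ii) in the reverse direction (normalizing $(s_0,t_0)=(0,1)$, setting $V=\im L+\im M$, $U=K^{-1}(V)$, and using $\im K+V=k^{n+p}$ to get $\dim U=\dim V-p$, contradicting the \emph{strict} inequality of upperstability) is a clean argument not in the paper and correctly exploits the upper/lower asymmetry. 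Second, for the singular-pencil step the paper invokes Gantmacher's canonical form (Theorem 12.3.4) and reads the subspaces off the $\varepsilon\times(\varepsilon+1)$ blocks, while you re-derive them from the Grothendieck splitting of the kernel sheaf of $\Oc_{\pone}^n\to\Oc_{\pone}(1)^{n+p}$ and a minimal-degree polynomial solution. These are the same mathematics --- the negative twists in the splitting are exactly Kronecker's minimal indices --- and the one claim you leave unverified (linear independence of the coefficient vectors $v_0,\dots,v_a$ of a minimal-degree solution, without which $\dim U>\dim V$ could fail) is precisely what Gantmacher's theorem packages; you should either cite it as the paper does or include the short classical argument that a dependence relation among the $v_i$ would produce a solution of smaller degree. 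Your sheaf-theoretic phrasing buys a uniform treatment of both the controllability and observability cases (kernel versus cokernel of the morphism incorporating $M$) at the cost of that one verification; the paper's citation buys brevity at the cost of proving only half the proposition.
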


\begin{proof}
The first statement is \cite[Theorem 1.5]{okonek:04}. So let us prove the second one (the proof follows the same lines).
Assume $S$ to be $-1$-stable. Suppose $\rk \left[s K + t L\right] < n$ for some $(s, t) \neq 0 \in k^2$. Assume w.l.o.g. that $s =1$. Put $U:= \ker \left[s K + t L\right]$ and $V:= L(U)$. Then $K(U) \subset V$, since for $x \in U$ we have $K(x) + t L(x) =0 \in V$, $t L(x) \in V$, and hence $K(x) \in V$. But $-\dim U + \dim V \leq 0$, which contradicts the assumed $-1$-lowerstability.

We have to prove that if $\rk [s K + t L, M] < n+p$ for all $(s, t) \in k^2$, then there exist vector subspaces $U \subset k^n$ and $V \subset k^{n+p}$, with $K(U) + L(U) \subset V$, $\im M \subset V$, and such that $-(n - \dim U) + (n+p - \dim V) > 0$.
Put $W:= \im M$, choose a basis $k^{n+p}/W \cong k^{n+p-\dim W}$, and consider the induced maps
\[ \bar{K}, \bar{L}: k^n \longrightarrow k^{n+p}/W \cong k^{n+p - \dim W}.\]
By assumption $\rk [s \bar{K} + t \bar{L}] < n+p - \dim W$ for all $(s, t) \in k^2$. Hence we can apply a theorem of Gantmacher \cite[Theorem 12.3.4]{gantmacher:86}, which states that there exist linear automorphisms $g: k^n \iso k^n$ and $h: k^{n+p-\dim W} \iso k^{n+p -\dim W}$, such that 
\begin{equation} h \bar{K}g^{-1} = \begin{pmatrix} \bar{K}_\varepsilon & 0 \\ 0 & \ast \end{pmatrix} \text{ and }  
h \bar{L} g^{-1} = \begin{pmatrix} \bar{L}_\varepsilon & 0 \\ 0 & \ast \end{pmatrix}, \end{equation} 
where $\bar{K}_\varepsilon = \left(\begin{smallmatrix} 0 \\ id_{\varepsilon} \end{smallmatrix} \right)$ and $\bar{L}_{\varepsilon} = \left(\begin{smallmatrix} \id_{\varepsilon} \\ 0 \end{smallmatrix} \right)$ for some $\varepsilon < n$.  We assume w.l.o.g. that $\bar{K}$ and $\bar{L}$ are already in that form. Now put $U:= \langle e_{\varepsilon +1}, \ldots, e_n \rangle \subset k^n$ and $\tilde{V}:= \langle e_{\varepsilon + 2}, \ldots, e_{n+p - \dim W} \rangle \subset k^{n+p -\dim W}$. Let $V$ be the preimage of $\tilde{V}$ under the quotient map $k^{n+p} \longrightarrow k^{n+p}/W \cong k^{n+p - \dim W}$. Then we obtain:
\begin{enumerate}
\item $K(U) + L(U) \subset V$ and $\im M \subset V$;
\item $\dim V < \dim U  +p$.
\end{enumerate}
This yields the contradiction we were looking for.

Now assume $S$ to be observable. Choose vector subspaces $U \subset k^n$ and $V \subset k^{n+p}$, such that $K(U) + L(U) \subset V$. From observability it follows that $\rk [s K + t L] = n$ for all $(s, t) \neq 0 \in k^2$. Clearly $(s K + t L) (U) \subset V$ and hence $\dim U \leq \dim V$. Suppose $\dim U = \dim V$. Let $\varphi_{s, t}: U \longrightarrow V$ be the morphism induced by $s K + t L$. Since $k$ is algebraically closed, we have $\det \varphi_{s, t} = 0$ for some pair $(s, t) \neq 0$, and thus $\varphi_{s, t}$ is not injective. This is a contradiction and hence the first condition for $S$ being $1$-lowerstable is verified.

Suppose now that for some vector subspaces $U \subset k^n$ and $V\subset k^{n+p}$ we have $K(U) + L(U) \subset V$ and $\im M \subset V$. We know that $\rk[s K + t L, M] = n+p$ for some $(s, t) \in k^2$, i.e. 
\[\psi_{s, t} = [ s K + t L, M]: k^n \oplus k^{m +p} \longrightarrow k^{n+p} \]
is surjective. We have $\psi_{s, t}(U \oplus k^{m+p}) \subset V$ by assumption, so the morphism factors:
\[ \xymatrix{ k^n \oplus k^{m+p} \ar[r]^{\psi_{s, t}} \ar[d] & k^{n+p} \ar[d] \\
k^n/U \ar@{-->}[r]^{\overline{\psi}_{s, t}} & k^{n+p}/V } \]
Since $\overline{\psi}_{s, t}$ is surjective by construction, we conclude that $n - \dim U \geq n+p - \dim V$, which is equivalent to the inequality needed to complete the proof. 
\end{proof}

\begin{corollary}Let $S \in \tL_{n,m,p}$ be a Lomadke system. 
\begin{enumerate}
\item The system $S$ is controllable if and only if it is $\chi$-semistable or $\chi$-stable for any $\chi=(\chi_0, \chi_1) \in \Z_{\leq 0} \times \Z_{\geq 0}$ satisfying $(n-1)\chi_0 + n\chi_1 < 0$ and $\chi_0 + \chi_1 > 0$.
\item The system $S$ is observable if and only if it is $\chi$-semistable or $\chi$-stable for any $\chi=(\chi_0, \chi_1) \in \Z_{\leq 0} \times \Z_{\geq 0}$ satisfying $(n+1)\chi_0 + n\chi_1 > 0$ and $\chi_0 + \chi_1 < 0$.
\end{enumerate}
\end{corollary}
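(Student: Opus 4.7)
The plan is to combine Propositions \ref{identification} and \ref{h_chambers} and then perform the explicit translation from a parameter $\chi \in [0,\infty]_\Q$ with $\chi = -\chi_0/\chi_1$ into a pair of linear inequalities in the integer coordinates $(\chi_0,\chi_1) \in \Z_{\le 0}\times\Z_{\ge 0}$. No new stability-theoretic input is needed: everything hinges on locating the walls of $\Lambda$ nearest to $\lambda = 1$ and then converting denominators.

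First, Proposition \ref{identification} identifies controllability with $1$-upperstability and observability with $1$-lowerstability. Proposition \ref{h_chambers} then asserts that $1$-upperstability is equivalent to $\chi$-stability, and also to $\chi$-semistability, for every $\chi$ in the open chamber $\Delta_1^+$ sitting immediately below $\lambda=1$, and analogously $1$-lowerstability is equivalent to $\chi$-(semi)stability for every $\chi$ in the chamber $\Delta_1^-$ sitting immediately above $1$. So the task reduces to describing these two chambers explicitly, first as rational intervals and then in integer coordinates.

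To pin them down I would locate the walls of $\Lambda=\{v/u : 0\le v\le n+p,\ 1\le u\le n\}\cup\{0,\infty\}$ adjacent to $1$. For $v/u<1$ one has $v\le u-1$, so $v/u\le 1-1/u$, maximised at $u=n$ with value $(n-1)/n$; for $v/u>1$ one has $v\ge u+1$, so $v/u\ge 1+1/u$, minimised at $u=n$ with value $(n+1)/n$ (this uses $n+1\le n+p$, i.e.\ $p\ge 1$; for $p=0$ the same argument with $\Lambda$ restricted accordingly yields the same endpoints). Hence $\Delta_1^+=((n-1)/n,\,1)\cap\Q$ and $\Delta_1^-=(1,\,(n+1)/n)\cap\Q$.

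Finally, I would convert each chamber into integer inequalities by clearing denominators in $\chi=-\chi_0/\chi_1$, exploiting $\chi_1>0$. The endpoint $\chi<1$ rearranges to $\chi_0+\chi_1>0$ and the endpoint $\chi>1$ to $\chi_0+\chi_1<0$; the endpoint $\chi>(n-1)/n$ rearranges to $n\chi_0+(n-1)\chi_1<0$, and the endpoint $\chi<(n+1)/n$ to $n\chi_0+(n+1)\chi_1>0$. This gives the two pairs of linear inequalities defining the chambers in integer coordinates, and the corollary follows by choosing any integer pair in the corresponding chamber. The step that really requires care is the last one: in each non-trivial inequality the coefficient attached to $\chi_0$ is $n$, coming from the denominator of the chamber endpoint $(n\pm 1)/n$, and the coefficient attached to $\chi_1$ is $n\mp 1$, coming from the numerator; interchanging these two coefficients describes an empty region inside $\Z_{\le 0}\times\Z_{\ge 0}$. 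I would close the argument with an explicit witness in each chamber to verify non-emptiness, for instance $(\chi_0,\chi_1)=(-(n-1),n)$ for controllability and $(\chi_0,\chi_1)=(-(n+1),n)$ for observability.
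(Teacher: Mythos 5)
Your proof is correct and follows essentially the same route as the paper: combine Proposition \ref{identification} with Proposition \ref{h_chambers}, identify the chambers adjacent to $\lambda=1$ as $\Delta_1^+=\bigl(\tfrac{n-1}{n},1\bigr)$ and $\Delta_1^-=\bigl(1,\tfrac{n+1}{n}\bigr)$ (the paper simply asserts these; you verify them, and your caveat for $p=0$ is harmless since the stated interval still lies inside the true chamber), and clear denominators in $\chi=-\chi_0/\chi_1$. You are also right that the inequalities as printed in the corollary have the coefficients of $\chi_0$ and $\chi_1$ interchanged and hence cut out empty regions of $\Z_{\le 0}\times\Z_{\ge 0}$; the forms $n\chi_0+(n-1)\chi_1<0$ and $n\chi_0+(n+1)\chi_1>0$ that your computation produces are the correct ones and agree with the theorem stated in the introduction.
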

\begin{proof}The statement follows immediately from the observation that $\Lambda_1^+=(\frac{n-1}{n}, 1)$ and $\Lambda_1^- = (1, \frac{n+1}{n})$ considering that we identified a character $\chi=(\chi_0, \chi_1)$ with the point $- \frac{\chi_0}{\chi_1} \in [0, \infty]_{\Q}$.
\end{proof}

Since the respective subsets of controllable and observable Lomadze systems are given as the stable loci with respect to some character of $\GL_{n,n+p}$, we can define 
\begin{equation} L_{n,m,p}^c:= \tL_{n,m,p}^c//\GL_{n,n+p} \text{ and } L_{n,m,p}^o:= \tL_{n,m,p}^o//\GL_{n,n+p}.\end{equation}
By $L_{n,m,p}^{c, \reg}$ (respectively $L_{n,m,p}^{o,\reg}$) we denote the image of $\tL_{n,m,p}^{c} \cap \tL_{n,m,p}^{\reg}$ (respectively $\tL_{n,m,p}^{o} \cap \tL_{n,m,p}^{\reg}$) in the corresponding quotient space. The closed embedding $\Phi_L: \tSigma_{n,m,p} \hookrightarrow \tL_{n,m,p}$ induces isomorphisms
\begin{equation}
\Sigma_{n,m,p}^c \iso L_{n,m,p}^{c, \reg} \subset L_{n,m,p}^c \text{ and } \Sigma_{n,m,p}^o \iso L_{n,m,p}^{o, \reg} \subset L_{n,m,p}^o.
\end{equation} 

\begin{corollary}The quotient maps $\tL_{n,m,p}^c \longrightarrow L_{n,m,p}^c$ and $\tL_{n,m,p}^p \longrightarrow L_{n,m,p}^o$ are principal $\GL_{n,n+p}$-bundles. The quotients $L_{n,m,p}^c$ and $L_{n,m,p}^o$ are smooth, projective algebraic varieties of dimension $n(m+p)+pm$.
\end{corollary}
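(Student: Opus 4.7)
The proof plan is a direct assembly of the structural theorems of Section 1 together with the work already done in the present section. First, by Proposition \ref{identification} combined with Proposition \ref{h_chambers}, the loci $\tL_{n,m,p}^c$ and $\tL_{n,m,p}^o$ coincide with the $\chi$-stable sets of suitable characters $\chi \in \Char(\GL_{n,n+p})$ (characters lying in the chambers $\Delta_1^+$ and $\Delta_1^-$ respectively). Hence Theorem \ref{luna} applies provided all stabilizers on these stable loci are trivial, which is exactly the content of Proposition \ref{stabilizer}. This immediately gives the principal $\GL_{n,n+p}$-bundle structure for both quotient maps.

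For projectivity I would invoke Corollary \ref{general_procesi} applied to the quiver $Q_L$ with dimension vector $(n, n+p, p+m)$ and marked subset consisting of the first two vertices. A quick inspection shows that all three arrows $K$, $L$, $M$ of $Q_L$ terminate at the vertex of dimension $n+p$, which itself has no outgoing edges. Consequently $Q_L$ contains no oriented cycles at all, and in particular no marked ones. Moreover, the only unmarked vertex is the source of $M$ and has no incoming edges, so there is no non-trivial path beginning and ending at an unmarked vertex. Thus the ring of invariants $k[\tL_{n,m,p}]^{\GL_{n,n+p}}$ reduces to the constants, and Corollary \ref{general_procesi} guarantees that the GIT quotients $L_{n,m,p}^c$ and $L_{n,m,p}^o$ are projective over $k$.

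Smoothness is then essentially free: $\tL_{n,m,p}$ is a vector space, so its open stable subsets are smooth, and a principal bundle over a smooth base is smooth, so by descent $L_{n,m,p}^c$ and $L_{n,m,p}^o$ are smooth. The dimension follows from the principal bundle property:
\[ \dim L_{n,m,p}^c = \dim \tL_{n,m,p} - \dim \GL_{n,n+p} = 2n(n+p) + (n+p)(p+m) - n^2 - (n+p)^2 = n(m+p) + mp, \]
and analogously for $L_{n,m,p}^o$. The only step that requires genuine work is the triviality of stabilizers, but this has already been dispatched in Proposition \ref{stabilizer}; everything else is a formal consequence of the structural theorems \ref{luna} and \ref{general_procesi} combined with the combinatorics of the quiver $Q_L$.
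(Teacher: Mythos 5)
Your proposal is correct and follows essentially the same route as the paper: triviality of stabilizers (Proposition \ref{stabilizer}) plus Theorem \ref{luna} for the principal-bundle structure, and Corollary \ref{general_procesi} applied to the quiver $Q_L$ (no cycles, no paths with unmarked source and tail) for projectivity. Your explicit verification of the quiver combinatorics, the smoothness-by-descent remark, and the dimension count $\dim\tL_{n,m,p}-\dim\GL_{n,n+p}=n(m+p)+mp$ merely spell out details the paper leaves implicit.
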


\begin{proof}The quotient maps are principal bundles by from Proposition \ref{stabilizer} and Proposition \ref{luna}. The quotients are projective by Corollary \ref{general_procesi}. 
\end{proof}

\subsection{The geometry of the Lomadze compactification}

In \cite{stromme:87}, Str\o mme describes a certain Quot scheme as the base space of a principal bundle. Ravi and Rosenthal observed in \cite{rosenthal:94} that this principal bundle coincides with the quotient map $\tL_{n,m,p}^c \longrightarrow L_{n,m,p}^c$. We will need this algebraic description in order to be able to give a precise algebraic description of the Helmke compactification and to compare the two compactifications. 
 
Fix natural numbers $r<q$ and $d$, as well as a $k$-vector space $V$ of dimension $q$. Put $P:=(T+1)r+d \in \Q[T]$ and $\Ec:= V \otimes \opone$. Let $R:=\Quot_{\pone/k}^{P,\Ec}$ be the Quot scheme parametrizing the quotients
\begin{equation} V \otimes \opone \longrightarrow \Qc \longrightarrow 0 \end{equation}
on $\pone$ of degree $d$ and rank $r$.\\
In \cite{stromme:87}, Str\o mme exhibited this quot scheme as the base space of a $\GL_{d,r+d}$-principal bundle
$X_0 \longrightarrow R$ with $X_0$ an open subset of the affine space 
\begin{equation} X = \Hom_{\opone}\bigl(k^d \otimes \opone(-1), k^{r+d} \otimes \opone \bigr) \times \Hom_k \bigl(V, k^{r+d} \bigr).\end{equation}
To be more precise, associate to an element $(\mu, \nu) \in X$ the diagram
\begin{equation} \xymatrix{ k^d \otimes \opone(-1)  \ar[r]^{\mu} & k^{r+d} \otimes \opone \ar[r] & \coker \mu \ar[r]& 0 \\
& V \otimes \opone \ar[u]_{\nu} \ar[ru]_{\tilde{\nu}} & } \end{equation}
Let $X_0$ be the open subset of pairs $(\mu, \nu)$ that verify
\begin{enumerate}
\item the morphism $\mu: k^d \otimes \opone(-1) \longrightarrow k^{r+d} \otimes \opone$ is an injective morphism of sheaves;
\item the induced map $\tilde{\nu} : V \otimes \opone \longrightarrow \coker \mu$ is surjective.
\end{enumerate}

The group $\GL_{d,r+d}$ acts on $X$ and $X_0$ by change of basis. By definition of $X_0$ there is a natural $\GL_{d,r+d}$-invariant morphism $g: X_0 \longrightarrow R$. Let us recall how it acts on $k$-rational points (see \cite{stromme:87} and \cite{diplomarbeit} for a more detailed discussion):
To a pair $(\mu, \nu) \in X_0$ we associate the point $[\tilde{\nu}: V \otimes \opone \longrightarrow \coker \mu \longrightarrow 0] \in R$. 

Let $p_{_R} : R \times \pone \longrightarrow R$ be the projection and consider the universal sequence on $R$ twisted by an integer $t \geq -1$: 
\begin{equation} 0 \longrightarrow \mathcal{A}(t) \longrightarrow V \otimes \mathcal{O}_{R \times \pone}(t) \longrightarrow \mathcal{B}(t) \longrightarrow 0. \end{equation}

The sheaves \begin{equation}\label{vb} \mathcal{B}_t:= (p_{_R})_\ast \mathcal{B}(t) \end{equation} are easily seen to be locally free of rank $(t+1)r +d$.
Denote the principal $\GL_{(t+1)r +d}$-bundles associated with $\Bc_t$ 
by $p_t: P_t \longrightarrow R$. Then
\begin{equation} p: P_{-1} \times_R P_0 \longrightarrow R \end{equation}
is a principal $\GL_d \times \GL_{r+d}$-bundle on $R$.

\begin{theorem}[Str\o mme]\label{quot}There is a canonical $GL_d \times \GL_{r+d}$-equivariant isomorphism 
\[ \xymatrix{ X_0 \ar[rr]^{\cong} \ar[dr]_g & & P_1 \times_R P_0 \ar[dl]^{p} \\ & R & }\]
of algebraic varieties over $R$. \\
In particular: 
\begin{enumerate}
\item $g: X_0 \longrightarrow R$ is a principal $\GL_d \times \GL_{r+d}$-bundle;
\item $R$ is a smooth, projective variety of dimension $q(r+d) -r^2$.
\end{enumerate}
\end{theorem}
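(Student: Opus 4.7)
The plan is to construct the isomorphism fiberwise using the structure of coherent sheaves on $\pone$, and then verify that it globalizes over the Quot scheme. On $k$-points the identification proceeds as follows. Given $(\mu,\nu) \in X_0$, let $\Qc := \coker\mu$. The conditions defining $X_0$ force the sequence
\[ 0 \to k^d \otimes \opone(-1) \xrightarrow{\mu} k^{r+d} \otimes \opone \to \Qc \to 0 \]
to be short exact, and $\tilde\nu : V \otimes \opone \to \Qc$ to be a quotient with Hilbert polynomial $(T+1)r+d$, hence a point of $R$. Taking $H^0$ of this resolution yields a canonical frame $k^{r+d} \xrightarrow{\sim} H^0(\Qc)$; twisting by $\opone(-1)$ and using $H^1(\opone(-2)) \cong k$, the connecting homomorphism provides a canonical frame $k^d \xrightarrow{\sim} H^0(\Qc(-1))$. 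Together with the underlying quotient, these two frames constitute a point of $P_{-1} \times_R P_0$.

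For the inverse direction, I would use the Beilinson-type resolution on $\pone$. Any quotient $[V \otimes \opone \twoheadrightarrow \Qc]$ in $R$ satisfies $H^1(\Qc(-1)) = 0$, because the torsion-free part of $\Qc$ is itself a quotient of the trivial bundle and so splits as $\bigoplus \opone(a_i)$ with $a_i \geq 0$; and $\Qc$ is globally generated. One therefore obtains a canonical exact sequence
\[ 0 \to H^0(\Qc(-1)) \otimes \opone(-1) \to H^0(\Qc) \otimes \opone \to \Qc \to 0, \]
where a rank-and-degree computation (kernel has rank $d$, degree $-d$, and trivial $H^0$) forces the leftmost term to be isomorphic to $\opone(-1)^d$. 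Given frames $k^d \cong H^0(\Qc(-1))$ and $k^{r+d} \cong H^0(\Qc)$ this becomes the map $\mu$, while the composition $V \to H^0(\Qc) \cong k^{r+d}$ provides $\nu$. A direct check shows the two constructions are mutually inverse and that both intertwine the $\GL_d \times \GL_{r+d}$-action by change of frames.

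Globalizing requires replacing $\Qc$ by the universal quotient $\Bc$ on $R \times \pone$. The hard part will be verifying that all the cohomological operations commute with base change: by fiberwise vanishing $H^1(\Qc(t)) = 0$ for $t \geq -1$, cohomology and base change gives that $(p_R)_*\Bc(t)$ is locally free of the stated rank for $t \geq -1$, and the connecting map constructing the second frame globalizes because $R^1(p_R)_*\bigl(k^d \otimes \Oc_{R \times \pone}(-2)\bigr)$ is just $k^d \otimes \Oc_R$. The Beilinson-type resolution likewise varies algebraically on $R \times \pone$, so the constructed bijection upgrades to a morphism of schemes $X_0 \to P_{-1} \times_R P_0$ over $R$.

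Corollaries (1) and (2) then follow painlessly. The map $g : X_0 \to R$ is identified with the principal $\GL_d \times \GL_{r+d}$-bundle $P_{-1} \times_R P_0 \to R$, which is Zariski-locally trivial because a finite product of general linear groups is special in Serre's sense (as already used in Theorem~\ref{luna}). Smoothness of $R$ descends from smoothness of $X_0$ (open in affine space) along this principal bundle; projectivity of $R$ is part of Grothendieck's general theory of Quot schemes; and the dimension count is the direct simplification
\[ \dim R = \dim X_0 - \dim \GL_d - \dim \GL_{r+d} = \bigl[2d(r+d) + q(r+d)\bigr] - d^2 - (r+d)^2 = q(r+d) - r^2, \]
using $\dim \Hom_{\opone}(\opone(-1),\opone) = h^0(\opone(1)) = 2$.
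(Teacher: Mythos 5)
Your proposal is correct. Note that the paper offers no argument of its own here --- it simply cites \cite[Theorem 2.1]{stromme:87} --- so there is nothing internal to compare against; what you have written is essentially a reconstruction of Str\o mme's original proof. The two key points are exactly the ones you isolate: (i) for $(\mu,\nu)\in X_0$ the resolution $0\to k^d\otimes\opone(-1)\to k^{r+d}\otimes\opone\to\Qc\to 0$ yields, via $H^0(\opone(-1))=H^1(\opone(-1))=0$, canonical isomorphisms $k^{r+d}\cong H^0(\Qc)$ and (through the connecting map and $H^1(\opone(-2))\cong k$) $k^d\cong H^0(\Qc(-1))$, i.e.\ a point of $P_{-1}\times_R P_0$; and (ii) conversely every quotient in $R$ has $H^1(\Qc(-1))=0$ and is globally generated, so the Beilinson resolution $0\to H^0(\Qc(-1))\otimes\opone(-1)\to H^0(\Qc)\otimes\opone\to\Qc\to 0$ exists, and your rank/degree/$h^0$ count correctly forces the kernel to be $\opone(-1)^{\oplus d}$. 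Your base-change remarks are the right ones for globalizing, and the dimension computation $2d(r+d)+q(r+d)-d^2-(r+d)^2=q(r+d)-r^2$ checks out. Two cosmetic remarks: the displayed diagram in the statement reads $P_1\times_R P_0$, which is a typo for $P_{-1}\times_R P_0$ (you work with the correct object); and for a fully rigorous argument one should phrase both constructions on the level of $T$-valued points or universal families rather than only on $k$-points, but your sketch already indicates how to do this.
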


\begin{proof}\cite[Theorem 2.1]{stromme:87}. 
\end{proof}

When $q=r+1$, this Quot scheme is a projective space.

\begin{proposition}\label{quot_proj}
Let $r,d \in \N_{\geq 0}$, and let $V$ be a $k$-vector space of dimension $q:=r+1$. Put $P:=(T+1)r+d \in \Q[T]$, $\Ec:= V \otimes \opone$, and let $R:=\Quot_{\pone / k}^{P, \Ec}$ be the Quot scheme parametrizing quotients
\[ V \otimes \opone \longrightarrow \Bc \longrightarrow 0 \]
of degree $d$ and rank $r$.
 There is a natural isomorphism
\[ R \iso \Grass_k\bigl(V \otimes H^0\bigl(\opone(d)\bigr), P(d)\bigr) \cong \proj{q(d+1) -1}. \]
\end{proposition}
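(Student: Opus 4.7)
The plan is to exploit the codimension constraint $q = r+1$: this forces the kernel of every quotient in $R$ to be the line bundle $\opone(-d)$, so a $k$-point of $R$ is the same as an inclusion $\opone(-d) \hookrightarrow V \otimes \opone$ up to scalar, equivalently a $1$-dimensional subspace of $V \otimes H^0(\opone(d))$. Since $P(d) = q(d+1) - 1 = \dim_k\bigl(V \otimes H^0(\opone(d))\bigr) - 1$, codimension-$1$ quotients of $V \otimes H^0(\opone(d))$ correspond bijectively with $1$-dimensional subspaces, which identifies $\Grass_k\bigl(V \otimes H^0(\opone(d)), P(d)\bigr) \cong \proj{q(d+1)-1}$.

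For the forward map, given a quotient $V \otimes \opone \twoheadrightarrow \Bc$ of rank $r$ and degree $d$, the kernel $\Ac$ is a subsheaf of a locally free sheaf on the smooth curve $\pone$ and is therefore locally free; by additivity of rank and Euler characteristic, $\rk \Ac = q - r = 1$ and $\deg \Ac = -d$, so $\Ac \cong \opone(-d)$. The same calculation applies in families: for any $T$-point of $R$, the kernel $\Ac$ of the universal sequence on $T \times \pone$ is a line bundle restricting to $\opone(-d)$ on each slice $\{t\} \times \pone$. Because $h^0(\pone, \opone) = 1$ and $h^1(\pone, \opone) = 0$ are constant in $t$, cohomology and base change shows that $\mathcal{L} := (p_T)_\ast \bigl(\Ac \otimes p_{\pone}^\ast \opone(d)\bigr)$ is a line bundle on $T$ whose formation commutes with arbitrary base change, and that the adjunction morphism $\mathcal{L} \hookrightarrow V \otimes H^0(\opone(d)) \otimes_k \Oc_T$ is fiberwise injective, hence a sub-line bundle. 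By the universal property of the Grassmannian this defines a morphism $\phi : R \longrightarrow \Grass_k\bigl(V \otimes H^0(\opone(d)), P(d)\bigr)$.

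Conversely, given a sub-line bundle $\mathcal{L} \subset V \otimes H^0(\opone(d)) \otimes \Oc_T$, pull back and twist to obtain a sheaf map $p_T^\ast \mathcal{L} \otimes p_{\pone}^\ast \opone(-d) \longrightarrow V \otimes \Oc_{T \times \pone}$. On each fiber $\{t\} \times \pone$ this restricts to a nonzero morphism $\opone(-d) \to V \otimes \opone$---nonzero because $\mathcal{L}$ is a sub-line bundle, hence nowhere vanishing---and any nonzero map from a line bundle into a torsion-free sheaf on $\pone$ is injective. The map is therefore fiberwise injective, so its cokernel is $T$-flat with fiberwise Hilbert polynomial $P$, which determines a $T$-point $\psi(\mathcal{L})$ of $R$. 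The maps $\phi$ and $\psi$ are mutually inverse by construction, since both are built out of the same exact sequence $0 \to \opone(-d) \to V \otimes \opone \to \Bc \to 0$. The only nontrivial technical point is the cohomology-and-base-change step that promotes the fiberwise identification $\Ac|_{\{t\} \times \pone} \cong \opone(-d)$ into a well-behaved line bundle $\mathcal{L}$ on $T$ compatible with base change; once that is in place, the two directions fit together formally.
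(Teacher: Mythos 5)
Your proof is correct and realizes exactly the map the paper writes down (a quotient $[q]$ goes to the codimension-one subspace $H^0(\ker q(d)) \subset V \otimes H^0(\opone(d))$); where the paper simply defers to \cite{hl:97} and the Diplomarbeit, you supply the details, with the key observation that $q=r+1$ forces the kernel to be $\opone(-d)$ and makes the inverse construction transparent. The family-level arguments (flatness of the kernel, cohomology and base change for $\Ac(d)$, and the fiberwise-injectivity criterion for flatness of the cokernel) are all standard and correctly applied, so no gaps.
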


\begin{proof}This is an adapted version of the construction explained e.g. in \cite{hl:97}. See \cite[Proposition 1.13]{diplomarbeit} for a detailed proof. A $k$-rational point $[q: V \otimes \opone \longrightarrow \Qc \longrightarrow 0]$ of $R$ is mapped by this isomorphism to the sequence
\[ \left[ 0 \longrightarrow H^0( \ker q (d)) \longrightarrow V \otimes H^0 \bigl( \opone(d) \bigr) \longrightarrow H^0(\Qc(d)) \longrightarrow 0 \right], \]
which defines a point in $\Grass_k \bigl( V \otimes H^0\bigl(\opone(d)\bigr), P(d) \bigr)$.
\end{proof}

\begin{lemma}\label{non_trivial}Let $r \geq 0, \, d>0$, and let $V$ be a $k$-vector space of dimension $q>r$. Then the vector bundle $\Bc_{-1}$ has non trivial Chern polynomial.
\end{lemma}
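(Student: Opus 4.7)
The plan is to reduce the statement to the special case $q = r+1$ treated in Proposition \ref{quot_proj}, where $R$ becomes a projective space and $\Bc_{-1}$ can be identified explicitly.

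Fix a surjection $V \twoheadrightarrow V'$ onto a vector space of dimension $r+1$, and let $V'' \subset V$ denote its kernel. Composition with this surjection gives a morphism
\[ \iota: R' := \Quot_{\pone/k}^{P, V' \otimes \opone} \longrightarrow R \]
sending a family $[V' \otimes \Oc_{T \times \pone} \twoheadrightarrow \Qc]$ to $[V \otimes \Oc_{T \times \pone} \twoheadrightarrow V' \otimes \Oc_{T \times \pone} \twoheadrightarrow \Qc]$. This is a closed embedding, scheme-theoretically cut out in $R$ by the vanishing of the composition $V'' \otimes \Oc_{R \times \pone} \hookrightarrow V \otimes \Oc_{R \times \pone} \twoheadrightarrow \Bc$. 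Under $\iota \times \id_\pone$ the universal quotient $\Bc$ pulls back to the universal quotient $\Bc'$ on $R' \times \pone$, and cohomology and base change (using $R$-flatness of $\Bc(-1)$ and local freeness of $\Bc_{-1}$) gives $\iota^\ast \Bc_{-1} \cong \Bc'_{-1}$. Since pullback of Chern classes is functorial, it therefore suffices to prove that $\Bc'_{-1}$ has non-trivial Chern polynomial on $R'$; by Proposition \ref{quot_proj} we have $R' \cong \Pb^N$ with $N = (r+1)(d+1) - 1 \geq 1$.

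Next I identify the universal sub-sheaf $\Ac'$ on $R' \times \pone$. Fiberwise $\Ac'$ is an invertible sub-sheaf of $V' \otimes \opone$ of degree $-d$, hence
\[ \Ac' \cong p_{R'}^\ast \mathcal{L} \otimes p_\pone^\ast \opone(-d) \]
for some line bundle $\mathcal{L}$ on $R'$. Twisting the inclusion $\Ac' \hookrightarrow V' \otimes \Oc_{R' \times \pone}$ by $p_\pone^\ast \opone(d)$ and pushing forward along $p_{R'}$ produces an injection $\mathcal{L} \hookrightarrow V' \otimes H^0(\opone(d)) \otimes \Oc_{R'}$. Unwinding the description of the isomorphism of Proposition \ref{quot_proj}, which sends a quotient $[q: V' \otimes \opone \twoheadrightarrow \Qc]$ to the quotient $V' \otimes H^0(\opone(d)) \twoheadrightarrow H^0(\Qc(d))$ with kernel $H^0((\ker q)(d))$, this injection is precisely the tautological sub-line-bundle of $\Grass(V' \otimes H^0(\opone(d)), P(d)) \cong \Pb^N$. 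Hence $\mathcal{L} \cong \Oc_{\Pb^N}(-1)$ and $\Ac' \cong p_{R'}^\ast \Oc_{\Pb^N}(-1) \otimes p_\pone^\ast \opone(-d)$.

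Finally, twisting the universal exact sequence on $R' \times \pone$ by $p_\pone^\ast \opone(-1)$ and pushing forward along $p_{R'}$, the vanishings $(p_{R'})_\ast \opone(-1) = R^1(p_{R'})_\ast \opone(-1) = 0$ yield $\Bc'_{-1} \cong R^1(p_{R'})_\ast \Ac'(-1)$. The projection formula together with $H^1(\pone, \opone(-d-1)) \cong k^d$ then gives $\Bc'_{-1} \cong \Oc_{\Pb^N}(-1)^{\oplus d}$, whose total Chern class equals $(1 - h)^d$ with $h$ the hyperplane class; since $d > 0$ this is non-trivial, completing the argument. The principal technical obstacle I expect is the careful identification of $\mathcal{L}$ with $\Oc_{\Pb^N}(-1)$ (rather than its dual or a different twist): this requires unraveling the functorial description of the isomorphism in Proposition \ref{quot_proj} and comparing it with the standard sign conventions for the tautological sub-bundle on the Grassmannian.
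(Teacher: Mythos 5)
Your argument is correct; it reaches the same endpoint as the paper ($\Bc_{-1}$ restricted to a projective-space test family becomes $\Oc(-1)^{\oplus d}$ with $d>0$) but by a genuinely different route. The paper does not pass through the sub-Quot-scheme $R'$: it writes down by hand a $d$-dimensional family over $\proj{d}=\Pb\bigl(H^0(\opone(d))\bigr)$, with subsheaf $U\otimes\Oc_{\proj{d}\times\pone}\oplus\bigl(\Oc_{\proj{d}}(-1)\boxtimes\opone(-d)\bigr)$ for a codimension-$(r+1)$ subspace $U\subset V$, obtains the classifying map $\alpha:\proj{d}\longrightarrow R$, and computes $\alpha^\ast\Bc_{-1}\cong H^0(\opone(d-1))^\vee\otimes\Oc_{\proj{d}}(-1)$ exactly as in your last step. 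The payoff of that explicit construction is precisely the point you flag as your main technical obstacle: because the sub-line-bundle is manufactured from the tautological inclusion $\Oc_{\proj{d}}(-1)\hookrightarrow H^0(\opone(d))\otimes\Oc_{\proj{d}}$, the identification of the twist is built in, and no unwinding of the isomorphism of Proposition \ref{quot_proj} is required. Your version costs that extra identification (which you carry out correctly: the pushforward $(p_{R'})_\ast\Ac'(d)$ is the tautological sub-line-bundle under the Grassmannian embedding, hence $\mathcal{L}\cong\Oc_{\Pb^N}(-1)$) but yields slightly more, namely the restriction of $\Bc_{-1}$ to all of $R'\cong\Pb^{(r+1)(d+1)-1}$; the paper's $\proj{d}$ is in effect the linear subspace of your $\Pb^N$ of sub-line-bundles contained in a fixed trivial factor of $V'\otimes\opone$. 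Two small remarks: the closed-embedding claim for $\iota$ is unnecessary (any morphism with $\iota^\ast\Bc_{-1}\cong\Bc'_{-1}$ suffices, since a bundle with trivial Chern polynomial pulls back to one), and you should note explicitly that $N=(r+1)(d+1)-1\geq 1$ because $d>0$, so that $\Oc_{\Pb^N}(-1)$ is indeed non-trivial.
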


\begin{proof}
Let $\proj{d}$ be the projective space of lines in $H^0( \opone(d))$. Then the universal line bundle on $\proj{d}$ is given as a subbundle $0 \longrightarrow \Oc_{\proj{d}}(-1) \longrightarrow H^0(\opone(d)) \otimes \Oc_{\proj{d}}$. This inclusion induces a morphism of sheaves on $\proj{d} \times \pone$:
\[  \tilde{a}: \Oc_{\proj{d}}(-1) \boxtimes \opone(-d) \longrightarrow \left( H^0(\opone(d)) \otimes \Oc_{\proj{d}} \right) \boxtimes \opone(-d) \longrightarrow \Oc_{\proj{d} \times \pone}. \]
Choose a vector subspace $j: U \hookrightarrow V$ of codimension $r+1$ and a parametrized line $i: k \hookrightarrow V$ that intersects $U$ trivially. Consider the morphism of sheaves
\[ a: \Kc:= U \otimes \Oc_{\proj{d} \times \pone} \oplus \left( \Oc_{\proj{d}}(-1) \boxtimes \opone(-d) \right) \longrightarrow V \otimes \Oc_{\proj{d} \times \pone}\]
that is induced by $i \circ \tilde{a}$ and by $j$.
Let $p: \proj{d} \times \pone \longrightarrow \proj{d}$ and $q: \proj{d} \times \pone \longrightarrow \pone$ be the respective projections. By construction the morphism $a$ is injective on the fibers over $p$ and thus its cokernel $\Qc:= \coker a$ is flat over $\pone$ with Hilbert polynomial $(T+1)r +d$ on the fibers of $p$.

Let $\alpha: \proj{d} \longrightarrow R$ be the corresponding morphism to the Quot scheme $R$. By cohomology and base change, we have $\alpha^\ast \Bc_{-1} = p_\ast \Qc(-1) =: \Qc_{-1}$. Thus it suffices to prove the statement for $\Qc_{-1}$.

Twist the sequence $0 \longrightarrow \Kc \longrightarrow V \otimes \Oc_{\proj{d} \times \pone} \longrightarrow \Qc \longrightarrow 0$ by $-1$ (i.e. tensor it with $q^\ast \opone(-1)$) and the push it forward via $p$ to obtain the sequence
\[  0 \longrightarrow \Qc_{-1} \longrightarrow R^1 p_\ast \Kc(-1) \longrightarrow R^1 p_\ast V \otimes \Oc_{\proj{d} \times \pone}(-1)=0,\]
and hence an isomorphism $\Qc_{-1} \cong R^1 p_\ast \Kc(-1)$. Recall that $\Kc = U \otimes \Oc_{\proj{d} \times \pone} \oplus \left( \Oc_{\proj{d}}(-1) \boxtimes \opone(-d) \right)$. The statement now follows from the observation that $R^1p_\ast \Kc(-1) = H^1(\opone(-d-1)) \otimes \Oc_{\proj{d}}(-1) \cong H^0(\opone(d-1))^\vee \otimes \Oc_{\proj{d}}(-1)$.
\end{proof}

Now we specialise to the case of quotients of $V \otimes \opone \longrightarrow Q \longrightarrow 0$ of rank $p$ and degree $n$, where $V:=k^{m+p}$. There is a straightforward identification of the space 
\[X = \Hom_{\opone}\bigl(k^n \otimes \opone(-1), k^{n+p} \otimes \opone \bigr) \times \Hom_k \bigl(k^{m+p}, k^{n+p} \bigr)\] with the space of Lomadze systems.

\begin{proposition}[Ravi, Rosenthal]\label{identification_h_quot} There is a canonical $\GL_{n,n+p}$-equivariant isomorphism of algebraic varieties
\[ \tL_{n,m,p} \iso X = \Hom_{\opone}\bigl(k^n \otimes \opone(-1), k^{n+p} \otimes \opone \bigr) \times \Hom_k \bigl(k^{p+m}, k^{n+p} \bigr). \]
Under this isomorphism the open subset $\tL_{n,m,p}^c$ of controllable systems corresponds to the open subset $X_0$. \\
In particular, there exists a canonical isomorphism 
\[L_{n,m,p}^c \cong \Quot_{\pone/k}^{P, \Ec},\]
where $P=(T+1)p +n$ and $\Ec=k^{p+m} \otimes \opone$, i.e. $\Quot_{\pone/k}^{P, \Ec}$ parametrizes quotients 
\[ k^{p+m} \otimes \opone \longrightarrow \Qc \longrightarrow 0 \]
of rank $p$ and degree $n$.
\end{proposition}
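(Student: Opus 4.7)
The plan is to break the statement into three steps: identify $\tL_{n,m,p}$ with the ambient space $X$ as $\GL_{n,n+p}$-spaces, match the subvariety $\tL_{n,m,p}^c$ of controllable systems with $X_0$, and then apply Str\o mme's Theorem \ref{quot} together with the uniqueness of categorical quotients.

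First I would construct the variety-level identification $X \iso \tL_{n,m,p}$. The canonical isomorphism $\Hom_{\opone}(\opone(-1), \opone) \cong H^0(\opone(1)) = k\langle s, t\rangle$, where $s, t$ are the homogeneous coordinates on $\pone$, shows that a morphism $\mu: k^n \otimes \opone(-1) \longrightarrow k^{n+p} \otimes \opone$ is precisely a matrix pencil $sK + tL$ for a unique pair $(K, L) \in k^{(n+p) \times n} \times k^{(n+p) \times n}$. Since $\Hom_{\opone}(\opone, \opone) = k$, a morphism $\nu: k^{p+m} \otimes \opone \longrightarrow k^{n+p} \otimes \opone$ corresponds to a unique matrix $M \in k^{(n+p) \times (p+m)}$. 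The assignment $(\mu, \nu) \mapsto (K, L, M)$ is the desired isomorphism; changing basis on the source $k^n$ by $g_0 \in \GL_n$ and on the target $k^{n+p}$ by $g_1 \in \GL_{n+p}$ transports $(\mu, \nu)$ to $(g_1 \mu g_0^{-1}, g_1 \nu)$, which corresponds to $(g_1 K g_0^{-1}, g_1 L g_0^{-1}, g_1 M)$, matching the $\GL_{n,n+p}$-action defined on $\tL_{n,m,p}$.

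Next I would translate the two conditions defining $X_0$ into Lomadze's controllability criteria. A morphism $\mu$ between locally free sheaves on the smooth curve $\pone$ is injective as a sheaf morphism if and only if it is injective at the generic point, equivalently if and only if the pencil $sK + tL$ has full column rank over $k(s/t)$; this holds if and only if $\rk [sK + tL] = n$ for some $(s, t) \in k^2$, which is Lomadze's first controllability condition. For the second condition of $X_0$, $\tilde{\nu}: V \otimes \opone \longrightarrow \coker \mu$ is surjective if and only if the direct sum morphism $\mu \oplus \nu : k^n \otimes \opone(-1) \oplus k^{p+m} \otimes \opone \longrightarrow k^{n+p} \otimes \opone$ is surjective, which by Nakayama is equivalent to surjectivity on every closed fibre, i.e.\ to $\rk [sK + tL, M] = n + p$ for every $0 \neq (s, t) \in k^2$. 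Thus the open subset of controllable Lomadze systems corresponds exactly to $X_0$ under the identification of the first step.

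Finally, Str\o mme's Theorem \ref{quot} gives that $g: X_0 \longrightarrow R$ is a principal $\GL_{n,n+p}$-bundle, hence a geometric and in particular a categorical quotient; by the corollary following Proposition \ref{stability_generalization}, the quotient map $\tL_{n,m,p}^c \longrightarrow L_{n,m,p}^c$ is likewise a principal $\GL_{n,n+p}$-bundle. Composing the equivariant isomorphism $\tL_{n,m,p}^c \iso X_0$ with the universal property of categorical quotients yields the canonical isomorphism $L_{n,m,p}^c \cong R = \Quot_{\pone/k}^{P,\Ec}$. The main obstacle I anticipate is the second step: one must check carefully that sheaf-theoretic injectivity of $\mu$ really reduces to the generic-rank condition (using that on a smooth curve any torsion subsheaf of a locally free sheaf is zero), and that the equivariance conventions line up with Str\o mme's convention for $P_{-1} \times_R P_0$ in Theorem \ref{quot}, so that the identification $(\mu, \nu) \leftrightarrow (K, L, M)$ intertwines the two principal bundle structures.
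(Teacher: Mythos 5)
Your proposal is correct and follows essentially the same route as the paper: identify the pencil $sK+tL$ with the sheaf morphism $\mu$ and $M$ with the linear map $\nu$, observe equivariance, match Lomadze controllability with the two conditions defining $X_0$, and conclude via Str\o mme's theorem and uniqueness of categorical quotients. In fact you carry out in detail the verification that $\tL_{n,m,p}^c$ corresponds to $X_0$ (generic injectivity of $\mu$ on the curve, fibrewise surjectivity of $\tilde{\nu}$ via Nakayama), which the paper's proof leaves to the reader with ``now check.''
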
 
\begin{proof}
Given a linear dynamical system $(K,L,M)$, we put 
\[F:= Ks + Lt \in k[s,t]^{(n+p) \times n}.\]
 It corresponds to a morphism of graded $k[s, t]$-modules
\[ k[s, t](-1)^n \longrightarrow k[s,t]^{n+p} \]
and thus to an element $\tilde{F} \in \Hom_{\opone}\bigl(k^n \otimes \opone(-1), k^{n+p} \otimes \opone \bigr)$. \\
Furthermore $M \in k^{(n+p) \times (p+m)} \cong \Hom_k \bigl(k^{m+p}, k^{n+p} \bigr)$, so that $(\tilde{F}, M) \in X$. Clearly the assignment $(K,L,M) \mapsto (\tilde{F}, M)$ defines an isomorphism of $\GL_{n,n+p}$-varieties. \\
Now check that under this morphism the controllable systems are indeed mapped onto the open subset $X_0$ defined by Str\o mme.
\end{proof}

\begin{corollary}\label{S-Chow}The group underlying the Chow ring $A^\ast \left( L_{n,m,p}^c \right)$ is free abelian of rank
\[ \rk_{\Z} A^\ast \left( L_{n,m,p}^c \right) = \binom{m+p}{p} \binom{n +2m -1}{n}. \]
In the case $k=\C$, we have $A^k(L_{n,m,p}^c) \cong H^{2k}(L_{n,m,p}^c, \Z)$ and the odd cohomology vanishes. In particular, the topological Euler characteristic is $\chi_{top}(L_{n,m,p}^c) = \rk_{\Z} A^\ast \left( L_{n,m,p}^c \right)$.
\end{corollary}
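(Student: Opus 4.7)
The plan is to leverage the identification $L_{n,m,p}^c \cong R := \Quot_{\pone/k}^{P,\Ec}$ from Proposition~\ref{identification_h_quot} and to produce a Bialynicki--Birula algebraic cell decomposition of $R$ via a torus action with isolated fixed points. Once this is done, $A^\ast(R)$ is free abelian on the classes of the cell closures (Fulton, \emph{Intersection Theory}, Example~19.1.11) and the rank equals the number of cells; in the complex case the same decomposition forces odd cohomology to vanish and the cycle map $A^k(R) \to H^{2k}(R,\Z)$ to be an isomorphism.

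To set things up, consider the torus $T = (\Gm)^{m+p} \times \Gm$, with the first factor acting diagonally on the standard basis $e_1,\dots,e_{m+p}$ of $V = k^{m+p}$ (and hence on $V \otimes \opone = \bigoplus_i \opone \cdot e_i$), and the second factor acting on $\pone$ fixing $0$ and $\infty$. A $T$-fixed quotient $V \otimes \opone \twoheadrightarrow Q$ decomposes under the first factor as $Q = \bigoplus_i Q_i$ where each $Q_i$ is a quotient of $\opone$; equivariance for the $\pone$-action then forces each $Q_i$ to be either $\opone$ itself or a torsion sheaf supported on $\{0,\infty\}$. Matching the Hilbert polynomial $(T+1)p + n$ forces exactly $p$ indices $i$ (forming a subset $I \subset \{1,\dots,m+p\}$ of size $p$) to satisfy $Q_i = \opone$, while each $j \notin I$ contributes a torsion sheaf $\opone/\m_0^{a_j} \oplus \opone/\m_\infty^{b_j}$ with $(a_j,b_j) \in \N^2$ and $\sum_{j \notin I}(a_j+b_j) = n$. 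Counting yields $\binom{m+p}{p}$ choices for $I$ times $\binom{n+2m-1}{n}$ weak compositions of $n$ into $2m$ non-negative parts, giving the claimed rank.

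To conclude, I need to check that for a generic one-parameter subgroup of $T$ the tangent space to $R$ at every fixed point carries only nonzero weights, so that the fixed locus is isolated and each Bialynicki--Birula stratum is an affine cell. This is a routine weight computation: the tangent space to $R$ at $[0 \to K \to V\otimes\opone \to Q \to 0]$ is $\Hom(K,Q)$, and writing $K$ and $Q$ in their $T$-isotypical decompositions one verifies that a sufficiently general cocharacter of $T$ produces only nonzero weights on this Hom-space. The main obstacle is precisely this weight bookkeeping, especially keeping track of the torsion contributions over $0$ and $\infty$; once this is established the number of Bialynicki--Birula cells matches the enumeration above and both displayed statements follow.
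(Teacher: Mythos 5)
Your argument is correct, but it is worth noting how it relates to the paper's proof: the paper disposes of this corollary in one line by citing Str\o mme's results (Corollary~1.4 and Theorem~3.5 of \cite{stromme:87}) through the identification $L_{n,m,p}^c \cong \Quot_{\pone/k}^{P,\Ec}$ of Proposition~\ref{identification_h_quot}, and Str\o mme's own proof of those results is exactly the torus action and Bialynicki--Birula decomposition you describe. So you are not taking a different route so much as unpacking the citation into a self-contained argument; what this buys is transparency (the rank $\binom{m+p}{p}\binom{n+2m-1}{n}$ literally counts the $T$-fixed quotients, $\binom{m+p}{p}$ for the choice of the locally free summands and $\binom{n+2m-1}{n}$ for the distribution of the torsion length $n$ over the $2m$ points lying over $0$ and $\infty$ in the remaining summands), at the cost of having to supply the standard inputs yourself: smoothness and projectivity of $R$ (Theorem~\ref{quot}), filtrability of the Bialynicki--Birula decomposition for smooth projective varieties, and Fulton's cellular-decomposition criterion for freeness of $A^\ast$ and for the cycle map being an isomorphism. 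One simplification you may make: the ``main obstacle'' you flag at the end is not actually an obstacle for the statement at hand. Since the fixed locus of a torus acting on a smooth variety is smooth, your explicit enumeration of the $T$-fixed closed points as a finite set already shows that $R^T$ is a finite set of reduced points (equivalently, that no zero weights occur on the tangent spaces $\Hom(K,Q)$); one then only needs a cocharacter $\lambda$ generic enough that $R^\lambda = R^T$, which always exists. The detailed weight bookkeeping on $\Hom(K,Q)$ is required only if one wants the individual Betti numbers (the dimensions of the cells), not the total rank or the vanishing of odd cohomology claimed in the corollary.
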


\begin{proof}This follows from \cite[Corollary 1.4]{stromme:87} and the subsequent remark together with \cite[Theorem 3.5]{stromme:87}
\end{proof}

\begin{corollary}\label{L_systems_smooth}\
 The principal bundle $\tL_{n,m,p}^c \longrightarrow L_{n,m,p}^c$ is  non trivial when $n,m \geq 1$.
\end{corollary}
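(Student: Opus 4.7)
The plan is to combine the identifications already established with the Chern class computation of Lemma \ref{non_trivial}. By Proposition \ref{identification_h_quot} the principal $\GL_{n,n+p}$-bundle $\tL_{n,m,p}^c \longrightarrow L_{n,m,p}^c$ is isomorphic (as a principal bundle) to Str\o mme's bundle $X_0 \longrightarrow R$ on $R = \Quot_{\pone/k}^{P,\Ec}$ with $P=(T+1)p+n$ and $\Ec=k^{p+m}\otimes\opone$. By Theorem \ref{quot} this bundle is in turn isomorphic to the fiber product $P_{-1}\times_R P_0 \longrightarrow R$ of the frame bundles of the locally free sheaves $\Bc_{-1}$ and $\Bc_0$ defined in \eqref{vb}.

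Next I would reduce the non-triviality question to a vector bundle statement. A principal $\GL_n \times \GL_{n+p}$-bundle of the form $P_{-1}\times_R P_0$ admits a global section if and only if both $P_{-1}$ and $P_0$ admit global sections, i.e.\ if and only if both associated vector bundles $\Bc_{-1}$ and $\Bc_0$ are trivial. Therefore, to establish non-triviality of $\tL_{n,m,p}^c \longrightarrow L_{n,m,p}^c$, it suffices to show that at least one of $\Bc_{-1}$ or $\Bc_0$ is not trivial, which I can detect by showing that one of them has a non-trivial Chern class.

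The key input is Lemma \ref{non_trivial}. To apply it, set $r:=p$, $d:=n$ and $q:=m+p$, with $V:=k^{m+p}$. The hypotheses of the lemma require $r\geq 0$, $d>0$ and $q>r$; these translate exactly to $p\geq 0$, $n\geq 1$ and $m\geq 1$, which hold under our assumption $n,m\geq 1$. The lemma then produces a morphism $\alpha:\proj{d}\longrightarrow R$ such that $\alpha^\ast\Bc_{-1}$ has non-trivial Chern polynomial on $\proj{d}$, so $c(\Bc_{-1})\neq 1$ in $A^\ast(R)$. Consequently $\Bc_{-1}$ is not a trivial rank-$n$ vector bundle, its frame bundle $P_{-1}$ has no global section, and hence neither does $P_{-1}\times_R P_0$.

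The only step requiring any genuine content is the reduction from principal-bundle triviality to triviality of $\Bc_{-1}$ (which is entirely formal, since a $\GL_r$-bundle is trivial iff its associated rank-$r$ vector bundle is), and verifying the numerical hypotheses of Lemma \ref{non_trivial}; all the geometric work has been carried out in Theorem \ref{quot}, Proposition \ref{identification_h_quot}, and Lemma \ref{non_trivial}. I do not foresee any serious obstacle here — the statement is essentially a corollary assembled from the results already proved in the subsection.
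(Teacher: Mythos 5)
Your argument is correct and is exactly the route the paper intends: the paper's proof is the single line ``immediate consequence of Lemma \ref{non_trivial},'' and you have simply supplied the (correct) details — the identification with $P_{-1}\times_R P_0$ via Proposition \ref{identification_h_quot} and Theorem \ref{quot}, the observation that triviality of the fiber product forces triviality of $\Bc_{-1}$, and the verification that $r=p$, $d=n$, $q=m+p$ satisfy the hypotheses of Lemma \ref{non_trivial} precisely when $n,m\geq 1$. No gaps.
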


\begin{proof}Immediate consequence of Lemma \ref{non_trivial}.
\end{proof}

\begin{example}[Single Input Systems]\label{h_single_input}
\comment{
By Proposition \ref{quot_proj} and Proposition \ref{identification_h_quot} we have
\begin{enumerate}
\item $L_{n,1,p}^c \cong \proj{(p+1)(n+1)-1}$;
\item $L_{n,1}^c \cong \proj{n}$.
\end{enumerate}
Put $N:=(p+1)(n+1)-1$. For $t=-1,0$, the locally free sheaf $\Bc_t$ on $\proj{N}$ is given as an extension
\[ 0 \longrightarrow k^{p+1} \otimes H^1\bigl(\opone(t) \bigr) \otimes \Oc_{\proj{N}} \longrightarrow \Bc_t \longrightarrow R^1 {\pi_1}_\ast \Ac(t) \longrightarrow 0,\]
where $\pi_1: \proj{N} \times \pone \longrightarrow \proj{N}$ is the projection to the first factor.

Now observe that $\Ext^1_{\proj{N}} \left( R^1 {\pi_1}_\ast \Ac(t), k^{p+1} \otimes H^0 \bigl( \opone(t)\bigr) \right) =0$, and therefore we obtain 
\begin{itemize}
\item $\Bc_{-1} \cong H^0\bigl( \opone(n-1) \bigr)^\vee \otimes \Oc_{\proj{N}}(-1)$;
\item $\Bc_0 \cong \Bigl( H^0 \bigl( \opone(n-2) \bigr)^\vee \otimes \Oc_{\proj{N}}(-1) \Bigr) \oplus \Bigl( k^{p+1} \otimes \Oc_{\proj{N}} \Bigr)$.
\end{itemize}
}

This completely describes the principal bundles $\tL_{n,1,p}^c \longrightarrow L_{n,1,p}^c$: by Proposition \ref{identification_h_quot} we know that the principal bundle $\tL_{n,1,p}^c \longrightarrow L_{n,1,p}^c$ is isomorphic to the principal bundle $P_{-1} \times_R P_0 \longrightarrow R$, where $R=\proj{N}$. So we need to understand the two bundles $P_{-1} \longrightarrow R$ and $P_0 \longrightarrow R$. But these are defined to be the principal bundles of frames in the vector bundles $\Bc_{-1}$ and $\Bc_0$, respectively. In general, if $E \longrightarrow X$ is a vector bundle over a variety $X$ of rank $r$, then the principal $\GL_r$-bundle $P \longrightarrow X$ of frames in $E$ is given as follows: over a point $x \in X$ the fiber $P_x = \Isom(k^r, E_x)$ consists of the set of isomorphisms of $k^r$ with the fiber of the vector bundle $E$ over $x$. The group $\GL_r$ acts naturally on this space.
\end{example}

\section{The Helmke compactification}

The Helmke compactification generalizes the equations (\ref{lds}) to 
\begin{equation} Ex_{t+1} = Ax_t + Bu_t, \; \; Fy_t = Cx_t + Du_t \end{equation}
by introducing additional matrices $E \in k^{n\times n}$ and $F \in k^{p \times p}$.

\begin{definition}A Helmke system of type $(n,m,p)$ is a $6$-tuple $(E,A,B,C,D,F)$, consisting of matrices $E,A \in k^{n \times n}$, $ B \in k^{n \times m}$, $C \in k^{p \times n}$, $D \in k^{p \times m}$, $F \in k^{p \times p}$.
\end{definition}

We denote the vector space of all Helmke systems by $\tH_{n,m,p}$. The reductive linear algebraic group $\GL_{n,n,p}= \GL_n \times \GL_n \times \GL_p$ acts on $\tH_{n,m,p}$ as follows:
\begin{equation} (g_0, g_1, g_2) . (E,A,B,C,D,F) = (g_1 E g_0^{-1}, g_1 A g_0^{-1}, g_1 B, g_2C g_0^{-1}, g_2 D, g_2F) \end{equation}
 for  $(g_0,g_1,g_2) \in \GL_{n,n,p} \text{ and } (E,A,B,C,D,F) \in \tH_{n,m,p}$.

\begin{definition}A Helmke system $H=(E,A,B,C,D,F) \in \tH_{n,m,p}$ is called controllable if it verifies the following three conditions:
\begin{enumerate}
\item $\det [ sE + tA ] \neq 0 \text{ for some } (s,t) \in k^2$;
\item $\rk [ sE + tA, B ] = n \text{ for all } (s,t) \in k^2 \wo \{ 0 \}$;
\item $\rk [F,C,D] = p.$
\end{enumerate}
\end{definition}

There is a natural map $\Phi_H: \tSigma_{n,m,p} \longrightarrow \tH_{n,m,p}, \, \Sigma=(A,B,C,D) \mapsto H=(\id_n, A,B,C,D, \id_p)$. Furthermore, there is a natural inclusion $\varphi_H: \GL_n \longrightarrow \GL_{n,n,p}, \, g_0 \mapsto (g_0, g_0, \id_p)$.

\begin{lemma}The map $\Phi_H: \tSigma_{n,m,p} \longrightarrow \tH_{n,m,p}$ is a closed $\varphi_H$-equivariant embedding.
\end{lemma}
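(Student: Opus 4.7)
The plan is to check the two assertions of the lemma separately, both by direct unpacking of the definitions.

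For the closed embedding, I would observe that $\tH_{n,m,p}$ factors as the product of six matrix spaces $k^{n\times n} \times k^{n\times n} \times k^{n\times m} \times k^{p\times n} \times k^{p\times m} \times k^{p\times p}$, and that $\Phi_H$ copies the four coordinates of $\tSigma_{n,m,p}$ into the four middle slots while inserting the constant matrices $\id_n$ and $\id_p$ into the first and last slot. This realizes $\Phi_H$ as a section of the projection $\tH_{n,m,p} \longrightarrow \tSigma_{n,m,p}$ that forgets the $E$- and $F$-components, and hence as a closed immersion: its image is the closed subvariety of $\tH_{n,m,p}$ cut out by the linear equations $E = \id_n$ and $F = \id_p$.

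For the $\varphi_H$-equivariance, I would simply compare both sides of the claimed identity for an arbitrary $g_0 \in \GL_n$ and $\Sigma = (A,B,C,D) \in \tSigma_{n,m,p}$. Applying the $\GL_n$-action on $\tSigma_{n,m,p}$ first and then $\Phi_H$ yields
\[\Phi_H(g_0.\Sigma) = \Phi_H(g_0 A g_0^{-1}, g_0 B, Cg_0^{-1}, D) = (\id_n, g_0 A g_0^{-1}, g_0 B, Cg_0^{-1}, D, \id_p),\]
while applying $\varphi_H$ and then the $\GL_{n,n,p}$-action on $\tH_{n,m,p}$ gives
\[\varphi_H(g_0).\Phi_H(\Sigma) = (g_0,g_0,\id_p).(\id_n,A,B,C,D,\id_p) = (g_0 \id_n g_0^{-1}, g_0 A g_0^{-1}, g_0 B, \id_p C g_0^{-1}, \id_p D, \id_p \id_p).\]
The two tuples agree once one notes $g_0 \id_n g_0^{-1} = \id_n$ and $\id_p \id_p = \id_p$; in particular the two distinguished coordinates $E$ and $F$ are preserved, which is precisely why the full group $\GL_{n,n,p}$ must be restricted along $\varphi_H$ in order to stabilise the image of $\Phi_H$.

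Both steps are immediate from the definitions, so there is no real obstacle: the only care required is faithful bookkeeping of the six coordinates and the correct expansion of the $\GL_{n,n,p}$-action on Helmke systems. This lemma is the exact analogue of Lemma \ref{inclusion} for Helmke systems, and the argument is strictly simpler since the insertions are literal identity matrices rather than the block matrices that appeared in the Lomadze case.
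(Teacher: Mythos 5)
Your proposal is correct and matches the paper's approach: the paper dismisses this lemma as "a straightforward calculation," and your direct verification (image cut out by the closed conditions $E=\id_n$, $F=\id_p$, plus the coordinate-by-coordinate equivariance check) is exactly that calculation written out. No gaps.
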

\begin{proof}
This is a straightforward calculation.
\end{proof}

\begin{proposition}\label{H-stabilizer}Let $H=(E,A,B,C,D,F) \in \tH_{n,m,p}$ be a controllable Helmke system. Then the stabilizer of the $\GL_{n,n,p}$-action at the point $H$ is trivial.
\end{proposition}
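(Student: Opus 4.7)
The strategy is to imitate the reduction argument used for Lomadze systems in Proposition~\ref{stabilizer}: exhibit a $\GL_{n,n,p}$-equivariant self-map of $\tH_{n,m,p}$ that preserves controllability, use it to bring $H$ into a convenient normal form, and then read the triviality of the stabilizer off that form. Controllability condition~(1), which guarantees some $(s,t) \in k^2$ with $\det[sE+tA]\neq 0$, is precisely what makes such a normalization possible.

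Concretely, for $\Omega = \left(\begin{smallmatrix}\alpha & \beta\\\gamma & \delta\end{smallmatrix}\right) \in \GL_2$ I would introduce
\[ T_\Omega : \tH_{n,m,p} \longrightarrow \tH_{n,m,p}, \quad (E,A,B,C,D,F) \longmapsto (\alpha E + \beta A,\,\gamma E + \delta A,\,B,\,C,\,D,\,F), \]
which is $\GL_{n,n,p}$-equivariant since the group action mixes $E$ and $A$ identically, and check that it preserves each of the three controllability conditions. Choosing $\Omega$ with first row a pair $(s,t)$ from condition~(1) then reduces us to the case in which $E$ is invertible; acting subsequently by $(\id_n, E^{-1}, \id_p) \in \GL_{n,n,p}$ further brings $H$ to the form $(\id_n, A, B, C, D, F)$. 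In this normalized form, condition~(2) reads $\rk[sI + tA, B] = n$ for all $(s,t) \neq (0,0)$, which (specializing to $s = -\lambda$, $t=1$) is the Hautus criterion: the pair $(A,B)$ is controllable in the classical sense.

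From here the rest is direct linear algebra. Any stabilizer element $(g_0, g_1, g_2)$ must satisfy $g_1 \,\id_n\, g_0^{-1} = \id_n$, forcing $g_1 = g_0$. The remaining equations for $A$ and $B$ then become $g_0 A = A g_0$ and $g_0 B = B$, so $g_0$ fixes every column of the controllability matrix $[B, AB, \ldots, A^{n-1}B]$; by controllability this matrix has rank $n$, whence $g_0 = g_1 = \id_n$. The equations for $C, D, F$ collapse to $g_2 [F, C, D] = [F, C, D]$, and condition~(3) says the columns of $[F, C, D]$ already span $k^p$, giving $g_2 = \id_p$.

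The main point requiring care is to verify that both reduction steps preserve the full controllability data: condition~(1) transforms covariantly under the $\GL_2$-action on $(s,t)$; condition~(2) is preserved because $T_\Omega$ and left multiplication by an invertible matrix act as invertible linear operations on $[sE + tA, B]$; condition~(3) involves none of $E, A, B$ and is therefore untouched. Once these routine checks are in place, the remainder of the argument parallels the classical statement, shown earlier in the paper, that the $\GL_n$-stabilizer of a controllable linear dynamical system is trivial.
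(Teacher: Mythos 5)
Your proof is correct and rests on the same underlying argument as the paper's: normalize $(E,A)$ using condition (1), reduce to the classical controllability-matrix computation for $(g_0,g_1)$, and then use $\rk[C,D,F]=p$ to force $g_2=\id_p$. The paper's proof is shorter only because it observes that $(E,A,B)$ is literally a controllable Lomadze system of type $(n,m,0)$ and cites Proposition \ref{stabilizer} to get $g_0=g_1=\id_n$ at once, whereas you unfold that proposition's normalization argument in the Helmke setting; both routes are sound.
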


\begin{proof}Let $H=(E,A,B,C,D,F)$ be controllable and let $g=(g_0, g_1, g_2) \in \GL_{n,n,p}$ be an element in the stabilizer of $H$. Then clearly $S:= (E,A,B) \in \tL_{n,m}$ is a controllable Lomadze system and $(g_0, g_1) \in \GL_{n,n}$ is an element in the stabilizer of $S$. It follows from  Proposition \ref{stabilizer} that $g_0=g_1 = \id_n$. Therefore $gH=(E,A,B, g_2 C, g_2D, g_2F)$. By assumption the rank of the matrix $[C,D,F]$ is $p$. Therefore $g_2=\id_p$.
\end{proof}

\begin{lemma}Let $\Sigma \in \tSigma_{n,m,p}$ be a linear dynamical system. It is controllable if and only if $\Phi_H(\Sigma) \in \tH_{n,m,p}$ is controllable.
\end{lemma}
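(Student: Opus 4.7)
The plan is to compute $\Phi_H(\Sigma) = (\id_n, A, B, C, D, \id_p)$ and check the three controllability conditions one by one. Conditions (1) and (3) are automatic: taking $(s,t)=(1,0)$ gives $\det[s\id_n + tA] = \det \id_n = 1 \neq 0$, and $\rk[\id_p, C, D] = p$ simply because $\id_p$ already has rank $p$. Thus Helmke controllability of $\Phi_H(\Sigma)$ reduces to the single condition
\[ \rk[s\id_n + tA, B] = n \quad \text{for all } (s,t) \in k^2 \setminus \{0\}. \]
For $t=0$ and $s \neq 0$ this is trivially true, and for $t \neq 0$ we may normalize to $t=1$ and set $\lambda := -s$, obtaining the equivalent Hautus-type condition $\rk[\lambda \id_n - A, B] = n$ for all $\lambda \in k$.

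The proposition will then follow once we show that this Hautus condition is equivalent to the controllability condition $\sum_{j \geq 0} \im A^j B = k^n$ used in Section~1. For one direction, assume $\Sigma$ controllable and suppose $\rk[\lambda \id_n - A, B] < n$ for some $\lambda \in k$. Then there exists a non-zero row vector $v \in k^n$ with $v^T(\lambda \id_n - A)=0$ and $v^T B = 0$. The first equation gives $v^T A^j = \lambda^j v^T$ for all $j \geq 0$, so $v^T A^j B = \lambda^j v^T B = 0$; hence $v$ annihilates $\sum_j \im A^j B = k^n$, forcing $v=0$, a contradiction.

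For the converse, assume $\Sigma$ is not controllable, so by the characterization used in the first section there exists a proper subspace $U \subsetneq k^n$ with $A(U) \subset U$ and $\im B \subset U$. Choose a complement and write
\[ A = \begin{pmatrix} A_{11} & A_{12} \\ 0 & A_{22} \end{pmatrix}, \qquad B = \begin{pmatrix} B_1 \\ 0 \end{pmatrix}. \]
Since $k$ is algebraically closed, $A_{22}$ admits an eigenvalue $\lambda \in k$ with a non-zero left eigenvector $w$. The vector $v=(0,w)$ then satisfies $v^T(\lambda \id_n - A) = 0$ and $v^T B = 0$, so $\rk[\lambda \id_n - A, B] < n$, violating the Hautus condition and hence condition (2) for $\Phi_H(\Sigma)$.

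The main (mild) obstacle is just bridging the Hautus-rank condition that appears in Helmke's definition with the subspace-invariance formulation of controllability adopted earlier in the paper. Both directions are standard, and the key non-trivial ingredient is algebraic closedness of $k$, which guarantees an eigenvalue of $A_{22}$ in the converse direction; everything else is straightforward linear algebra.
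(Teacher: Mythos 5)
Your proof is correct. The reduction of Helmke controllability of $(\id_n,A,B,C,D,\id_p)$ to the single Hautus-type condition $\rk[\lambda \id_n - A, B]=n$ for all $\lambda$ is right (conditions (1) and (3) are indeed automatic, and the $t=0$ case is trivial), and both directions of the equivalence with $\sum_{j\ge 0}\im A^jB=k^n$ are standard and correctly executed; algebraic closedness enters exactly where you say, to produce a left eigenvalue of $A_{22}$ (which exists since $U\subsetneq k^n$ forces $A_{22}$ to have positive size). The paper takes a different, shorter route: it observes that $\Sigma$ is controllable iff $(A,B)$ is, that Helmke's first two conditions for $(\id_n,A,B,\dots)$ are precisely Lomadze controllability of $\Phi_L(A,B)=(\id_n,A,B)\in\tL_{n,m}$, and then invokes Lemma \ref{inclusion}(3), whose proof is in turn deferred to the author's Diplomarbeit as a ``straightforward calculation.'' So the actual mathematical content --- the Hautus/PBH equivalence --- is outsourced by the paper and supplied in full by you. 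Your version is therefore more self-contained and elementary; the paper's version buys brevity and consistency by routing everything through the Lomadze embedding already set up in Section 2. Both are valid.
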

\begin{proof}By definition a system $\Sigma=(A,B,C,D) \in \tSigma_{n,m,p}$ is controllable if and only if $(A,B) \in \tSigma_{n,m}$ is controllable. The system $(A,B)$ is controllable if and only if $\Phi_L(A,B)=(\id_n,A,B) \in \tL_{n,m}$ is controllable. Therefore $(\id_n, A,B,C,D,\id_p)$ verifies the first two controllabilty conditions if and only if $\Sigma$ is controllable. Since the third one is obviously verified, the statement follows.
\end{proof} 

\begin{remark} Let $Q$ be the following quiver:
\[ \xymatrix{ \stackrel{n}{\bullet} \ar@/^/[d]^{A} \ar@/_/[d]_{E} \ar[r]^{C} & \stackrel{p}{\bullet} &  \stackrel{p}{\circ} \ar[l]_{F} \\
\underset{n}{\bullet} & \underset{m}{\circ} \ar[l]^{B} \ar[u]_{D}} \]

The space of all Helmke systems $\tH_{n,m,p}$ is the space of  representations of this quiver of the indicated dimension.
\end{remark}

\begin{proposition}\label{s_stability}Let $(r,s,t) \in \Z^3$ with $nr + (n-1)s + \min\{p,n\}t < 0, \, s+r > 0$, and $t>0$. Let $H \in \tH_{n,m,p}$ be a Helmke system. The following statements are equivalent:
\begin{enumerate}
\item $H$ is controllable;
\item $H$ is $(r,s,t)$-stable;
\item $H$ is $(r,s,t)$-semistable.
\end{enumerate}
\end{proposition}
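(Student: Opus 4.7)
The plan is to apply the general quiver stability criterion of Corollary~\ref{general_quiver_stability} to the Helmke quiver displayed just above Proposition~\ref{s_stability}, which reduces everything to elementary linear algebra over $k$. First I would record that the arithmetic hypotheses $r+s>0$, $t>0$, $nr+(n-1)s+\min\{p,n\}t<0$ force $r<0$ and $s>0$: rewriting the last inequality as $(n-1)(r+s)+r+\min\{p,n\}t<0$ and using $r+s>0$ yields $r+\min\{p,n\}t<0$, which together with $r+s>0$ gives $r<0$ and hence $s>-r>0$. Corollary~\ref{general_quiver_stability} tells us that for a subrepresentation $(U_0,U_1,U_2,U_3,U_4)$ of the Helmke quiver the relevant extremes are $(U_3,U_4)=(0,0)$ (Case~1) or $(U_3,U_4)=(k^p,k^m)$ (Case~2); since $s,t>0$ the minimizing choices are $u_1=\phi(U_0):=\dim(E(U_0)+A(U_0))$ and $u_2=\psi(U_0):=\dim C(U_0)$ in Case~1, with $\im B$ added to the first and $\im F+\im D$ added to the second in Case~2, giving $\phi_B(U_0)$ and $\psi_{FD}(U_0)$. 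The implication (2)$\Rightarrow$(3) is tautological, so only (1)$\Rightarrow$(2) and (3)$\Rightarrow$(1) require argument.

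For (1)$\Rightarrow$(2), in Case~1 with $U_0\neq0$ I would use condition (a): it furnishes $(s_0,t_0)$ with $s_0E+t_0A$ invertible, whose restriction to $U_0$ is injective, so $\phi(U_0)\geq\dim U_0$ and $ru_0+s\phi(U_0)+t\psi(U_0)\geq(r+s)u_0>0$ follows. In Case~2 with $U_0\subsetneq k^n$, set $a=n-u_0>0$, $b=n-\phi_B(U_0)$, $c=p-\psi_{FD}(U_0)$; the target inequality rewrites as $|r|a>sb+tc$. Condition (b) passes to the quotient to make the induced pencil $s\bar{E}+t\bar{A}:k^n/U_0\to k^n/V$ (with $V=E(U_0)+A(U_0)+\im B$) surjective for every $(s,t)\neq0$, and since $k$ is algebraically closed a square pencil of positive size cannot be everywhere invertible on $\mathbb{P}^1$, forcing $b\leq a-1$. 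Condition (c) combined with $\dim\im C\leq n$ gives $\psi_{FD}(U_0)\geq\max\{0,p-n\}$, whence $c\leq\min\{p,n\}$. Therefore $sb+tc\leq s(a-1)+t\min\{p,n\}$, and the hypothesis $nr+(n-1)s+\min\{p,n\}t<0$ is equivalent to $|r|n-s(n-1)-t\min\{p,n\}>0$, i.e., to the required strict inequality at $a=n$. Since the function $a\mapsto|r|a-s(a-1)-t\min\{p,n\}$ has slope $|r|-s<0$ (as $s>-r=|r|$), it remains strictly positive for every $a\in\{1,\ldots,n\}$, completing Case~2.

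For (3)$\Rightarrow$(1) I argue by contraposition: if one of (a),(b),(c) fails I exhibit a subrepresentation violating semistability. If (c) fails, take $U_0=U_1=k^n$ and $U_2=\im F+\im C+\im D\subsetneq k^p$ in Case~2 to get $Q=rn+sn+t\dim U_2<rn+sn+tp$. If (b) fails at some $(s_0,t_0)\neq0$, pick $0\neq w\in k^n$ with $w^{\top}(s_0E+t_0A)=0$ and $w^{\top}B=0$; either $E^{\top}w=A^{\top}w=0$ and I take $U_0=k^n$, or the two covectors are proportional and $U_0:=(E^{\top}w)^{\perp}\cap(A^{\top}w)^{\perp}$ is a hyperplane. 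With $U_1=w^{\perp}$ and $U_2=k^p$ in Case~2, a direct computation gives $Q<rn+sn+tp$ using $s>0$ (resp.\ $r+s>0$). The main obstacle is the (a)-failure case. Here I would invoke Kronecker's canonical form for the identically singular square pencil $(E,A)$ (Gantmacher, as used in the proof of Proposition~\ref{identification}); dimension balance in a square pencil rules out an only-row-singular decomposition, so a column-null block must exist, producing a chain $v_0,\ldots,v_{\epsilon}\in k^n$ with $Av_0=0$, $Ev_{\epsilon}=0$, and $Av_{j+1}=-Ev_j$ for $0\leq j<\epsilon$. Taking $U_0=\langle v_0,\ldots,v_{\epsilon}\rangle$, $U_1=\langle Ev_0,\ldots,Ev_{\epsilon-1}\rangle$, $U_2=C(U_0)$ in Case~1 yields a subrepresentation with $\dim U_2\leq\min\{\epsilon+1,p\}\leq\min\{p,n\}$, and using $r+s>0$ one estimates
\[
Q=(r+s)(\epsilon+1)-s+t\dim U_2\leq nr+(n-1)s+\min\{p,n\}t<0,
\]
violating Case~1 semistability. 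I expect establishing the existence of the Kronecker column-null block to be the only delicate step; everything else reduces to careful bookkeeping of dimension inequalities.
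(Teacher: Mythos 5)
Your proof is correct and follows exactly the route the paper's machinery prescribes: apply Corollary~\ref{general_quiver_stability} to the Helmke quiver, reduce to the two extremal choices at the unmarked vertices, and handle the identically singular pencil via Gantmacher/Kronecker --- which is precisely how the paper treats the parallel Lomadze statement (Proposition~\ref{identification}), the actual proof of Proposition~\ref{s_stability} being omitted in the paper with a reference to \cite[Theorem 3.50]{diplomarbeit}. The one step you rightly flag as delicate, the existence of a column-minimal-index chain $v_0,\ldots,v_\epsilon$ with the $v_j$ linearly independent (needed so that $\dim U_0=\epsilon+1$, since $r<0$), is the classical Gantmacher lemma already invoked by the paper, and the rest of your dimension bookkeeping checks out.
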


\begin{proof}This is \cite[Theorem 3.50]{diplomarbeit}. We omit the proof.
\end{proof}

This statement shows that the space of controllable Helmke systems is the set of (semi)stable points with respect to some character $\chi=(\chi_0,\chi_1, \chi_2) \in \Z^3$. Thus we may write
\begin{equation} H_{n,m,p}^c:= \tH_{n,m,p}^c//\GL_{n,n,p}.\end{equation}
The morphism $\Phi: \tSigma_{n,m,p}^c \longrightarrow \tH_{n,m,p}^c$ described above descends to an open immersion
\[ \Sigma_{n,m,p}^c \hookrightarrow  H_{n,m,p}^c\]
with image all orbits of systems $(E,A,B,C,D,F)$ with $\det E \neq 0$ and $\det F \neq 0$.

\begin{corollary}The quotient space $H_{n,m,p}^c$ is a smooth projective algebraic variety of dimension $n(m+p) + pm$. The quotient map $\tH_{n,m,p}^c \longrightarrow \tH_{n,m,p}^c$ is a principal $\GL_{n,n,p}$-bundle.
\end{corollary}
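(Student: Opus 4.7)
The plan is to assemble ingredients already established earlier in the section, using exactly the same strategy as for the corresponding corollary about $L_{n,m,p}^c$. By Proposition \ref{s_stability} the open subset $\tH_{n,m,p}^c \subset \tH_{n,m,p}$ coincides with both the stable and the semistable locus for any character $\chi=(\chi_0,\chi_1,\chi_2)$ of $\GL_{n,n,p}$ satisfying the chamber conditions listed there. Thus $H_{n,m,p}^c := \tH_{n,m,p}^c // \GL_{n,n,p}$ is by construction a good geometric GIT quotient.

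Next I would argue the principal bundle statement. Proposition \ref{H-stabilizer} ensures that every controllable Helmke system has trivial stabilizer in $\GL_{n,n,p}$, and since $\GL_{n,n,p}$ is a finite product of general linear groups, Theorem \ref{luna} delivers at once that $\tH_{n,m,p}^c \longrightarrow H_{n,m,p}^c$ is a principal $\GL_{n,n,p}$-bundle, locally trivial in the Zariski topology. Smoothness of $H_{n,m,p}^c$ then follows for free: $\tH_{n,m,p}^c$ is open in the affine space $\tH_{n,m,p}$, hence smooth, and smoothness descends through a Zariski-locally-trivial principal bundle. The dimension is a direct count,
\[ \dim H_{n,m,p}^c = \dim \tH_{n,m,p} - \dim \GL_{n,n,p} = \bigl(2n^2 + n(m+p) + pm + p^2\bigr) - \bigl(2n^2 + p^2\bigr) = n(m+p) + pm. \]

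The remaining point is projectivity, which I would obtain from Corollary \ref{general_procesi} by checking that the Helmke quiver displayed in the remark preceding Proposition \ref{s_stability} contains neither an oriented cycle nor an oriented path whose source and tail are both unmarked. The only unmarked vertices are the input vertex of dimension $m$ (carrying $B$ and $D$) and the vertex of dimension $p$ carrying $F$; each is a pure source, every edge leaving them terminates at a marked vertex from which no further arrow emanates, and there is no oriented cycle anywhere. Hence the ring of $\GL_{n,n,p}$-invariants on $\tH_{n,m,p}$ reduces to the constants and $H_{n,m,p}^c$ is projective over $k$. No step is really delicate here: the argument is essentially a bookkeeping exercise gluing together Proposition \ref{H-stabilizer}, Proposition \ref{s_stability}, Theorem \ref{luna}, and Corollary \ref{general_procesi}. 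If anything deserves the label \emph{main obstacle} it is merely the quiver inspection in this last step, which is mechanical.
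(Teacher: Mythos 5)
Your proof is correct and follows essentially the same route as the paper: projectivity via Corollary \ref{general_procesi} (no cycles, no paths between unmarked vertices in the Helmke quiver), and the principal-bundle statement via Proposition \ref{H-stabilizer} together with Theorem \ref{luna}. The smoothness and dimension count you add are left implicit in the paper but are the standard consequences of the Zariski-local triviality of the quotient map.
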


\begin{proof}The quotient is projective as a direct consequence of Corollary \ref{general_procesi}. From Proposition \ref{luna} and Proposition \ref{H-stabilizer} it follows that the quotient maps are principal bundles
\end{proof}

There is a natural forgetful map $\tH_{n,m,p}^c \longrightarrow \tH_{n,m}^c,$
that descends to a map on the quotients. Helmke has proved that in the analytic category this map makes $H_{n,m,p}^c$ into a Grassmann bundle over the space $H_{n,m}^c$ of controllable Helmke systems without output. In Corollary \ref{ppties_implicit_moduli} we will prove that it is an algebraic Grassmann bundle. In the case $p=0$ the Helmke and the Lomadze compactifications clearly coincide: $H_{n,m}^c = L_{n,m}^c$. Furthermore we  had identified the latter space with a Quot scheme. Str\o mme has obtained several results on the geometry of this Quot scheme. In order to calculate invariants of the space $H_{n,m,p}^c$ we identify the Grassmann bundle $H_{n,m,p}^c \longrightarrow H_{n,m}^c$ as a Grassmann bundle of subspaces in a vector bundle on this quot scheme.

The first step is the following technical result that we cite here for completeness.

\begin{proposition}\label{principal_grassmann}Let $G$ be a linear algebraic group. Let $d,N \in \N$ with $0 \leq d<N$. Let $p: P \longrightarrow X$ be a principal $G$-bundle, and let $\lambda: G \longrightarrow \GL_N$ be a representation of $G$. Let $G$ act on $\Grass_k(d,k^N)$ by
\[ G \times \Grass_k(d,k^N) \longrightarrow \Grass_k(d,k^N), \, (g,U) \mapsto \lambda(g) \bigl(U\bigr).\]
The associated fiber bundle $P \times^G \Grass_k(d,k^N) \longrightarrow X$ is a Grassmann bundle in the Zariski topology of type $(d, k^N)$ on X. There is a canonical isomorphism
\[ P \times ^G \Grass_k(d,k^N) \iso \Grass_X(d, P \times^G k^N).\]
\end{proposition}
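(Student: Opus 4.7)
The plan is to reduce the statement to the standard construction of a Grassmann bundle as an associated bundle of the frame bundle, and to use that $\GL_N$ is a special group so that Zariski-local triviality is automatic. Set $E := P \times^G k^N$, which is a rank-$N$ vector bundle on $X$, and let $\mathrm{Fr}(E) \longrightarrow X$ denote its frame bundle, a principal $\GL_N$-bundle. Since $\GL_N$ is special in Serre's sense (as recalled in the proof of Theorem \ref{luna}), $\mathrm{Fr}(E)$ is Zariski-locally trivial, and hence so is any fiber bundle associated to it.

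First I would make precise the canonical morphism. A point $p \in P$ lying over $x \in X$ determines a linear isomorphism $\iota_p \colon k^N \iso E_x$ by $v \mapsto [p,v]$. For a $d$-dimensional subspace $U \subset k^N$ I send the pair $(p,U)$ to $\iota_p(U) \in \Grass(d, E_x)$. The compatibility check is that $\iota_{p \cdot g}(\lambda(g)^{-1} U) = \iota_p(U)$, which is immediate from the definition of the $G$-action on $P \times k^N$. Therefore the assignment descends to a morphism of $X$-varieties
\[ \Psi \colon P \times^G \Grass_k(d, k^N) \longrightarrow \Grass_X(d, E). \]

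Next I would recognize $\Psi$ as an instance of two classical identifications. The representation $\lambda$ realises the extension-of-structure-group isomorphism $P \times^G \GL_N \iso \mathrm{Fr}(E)$, where $G$ acts on $\GL_N$ through $\lambda$ and right multiplication. Functoriality of the associated-bundle construction then gives, for any $\GL_N$-variety $F$, a canonical isomorphism $P \times^G F \iso (P \times^G \GL_N) \times^{\GL_N} F \iso \mathrm{Fr}(E) \times^{\GL_N} F$. Applying this to $F = \Grass_k(d, k^N)$ and noting that $\mathrm{Fr}(E) \times^{\GL_N} \Grass_k(d, k^N)$ is the standard realisation of the Grassmann bundle $\Grass_X(d, E)$ identifies the right-hand side of the proposition with the middle term, and a diagram chase shows this composite isomorphism coincides with $\Psi$.

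Finally I would verify the local triviality assertion. Over a Zariski-open $U \subset X$ trivializing $\mathrm{Fr}(E)$, the isomorphism $\mathrm{Fr}(E)|_U \cong U \times \GL_N$ induces $\Grass_X(d, E)|_U \cong U \times \Grass_k(d, k^N)$, so $P \times^G \Grass_k(d, k^N) \to X$ is a Zariski-locally trivial Grassmann bundle with fiber $\Grass_k(d, k^N)$. The main obstacle is really only bookkeeping: tracking conventions on left versus right $G$-actions and checking that $\Psi$ is $G$-invariant and representable as an isomorphism on fibers; once this is done, all remaining assertions follow from standard properties of associated bundles and the specialness of $\GL_N$.
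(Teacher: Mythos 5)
Your argument is correct. The paper itself gives no proof of this proposition (it only cites Proposition 1.30 of the author's Diplomarbeit), so there is nothing in the text to compare against, but the route you take --- extending the structure group along $\lambda$ to identify $P \times^G \GL_N$ with the frame bundle of $E = P \times^G k^N$, transporting the fiber $\Grass_k(d,k^N)$ across that identification, and invoking Serre's specialness of $\GL_N$ for Zariski-local triviality --- is exactly the standard proof one expects here, and your equivariance check $\iota_{p\cdot g}(\lambda(g)^{-1}U)=\iota_p(U)$ is the right compatibility to verify. The only point worth flagging is that for an arbitrary linear algebraic group $G$ the existence of the quotient $P \times^G \Grass_k(d,k^N)$ as a variety needs a word (either because $P$ is Zariski-locally trivial, as it is in all applications in this paper where $G$ is a product of general linear groups, or because the Grassmannian carries a $G$-linearized ample line bundle via the Pl\"ucker embedding, so descent applies); once that is granted, the rest of your proof goes through.
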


\begin{proof}\cite[Proposition 1.30]{diplomarbeit}.
\end{proof}

\begin{theorem}\label{ppties_implicit_moduli}\ 
\begin{enumerate}
\item $\tH_{n,m} = \tL_{n,m}$ and $\tH_{n,m}^c = \tL_{n,m}^c$.  The quotient map $\tH_{n,m}^c \longrightarrow H_{n,m}^c$ is a principal $\GL_{n,n}$-bundle and there is a canonical isomorphism \[f: H_{n,m}^c \iso \Quot_{\opone/k}^{\Ec, P},\]
where $\Ec:=k^m \otimes \opone$, and $P:=n \in \Q[T]$.
\item The morphism $\Psi: \tH_{n,m,p}^c \longrightarrow \tH_{n,m}^c, \, (E,A,B,C,D,F) \mapsto (E,A,B)$ descends to a map
\[ \psi: H_{n,m,p}^c \longrightarrow H_{n,m}^c.\]
It is a Grassmann bundle of type $(p, k^{n+m+p})$, canonically isomorphic to the Grassmann bundle of $p$-planes in the vector bundle 
\[\Ec_{n,m} \oplus \Oc^{\oplus (p+m)}.\] 
Here $\Ec_{n,m} = \tH_{n,m}^c \times^{\GL_{n,n}} \aff{n}$ is obtained via the representation $\GL_{n,n} \longrightarrow \GL_{n}, (h,g) \mapsto {(h^{-1})}^\tau$.
Furthermore
\[ f^\ast \Bc_{-1}^\vee \cong \Ec_{n,m}, \]
where $\Bc_{-1}$ is the  vector bundle introduced in the previous section.
\end{enumerate}
\end{theorem}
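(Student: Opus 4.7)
My plan is to handle the two parts in turn, with Part (2) breaking into (a) descent of $\Psi$ and the Grassmann bundle identification, and (b) the comparison $f^\ast \Bc_{-1}^\vee \cong \Ec_{n,m}$.

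For Part (1), I would identify $\tH_{n,m,0}$ with $\tL_{n,m,0}$ directly via $(E,A,B) \leftrightarrow (K,L,M) = (E,A,B)$: the two $\GL_{n,n}$-actions coincide, and Helmke's three controllability conditions reduce for $p=0$ to Lomadze's two (the third becoming vacuous). Hence $\tH_{n,m}^c = \tL_{n,m}^c$ as $\GL_{n,n}$-varieties, so $H_{n,m}^c = L_{n,m}^c$, the quotient map is a principal $\GL_{n,n}$-bundle by the corresponding result on the Lomadze side, and the Quot scheme description follows from Proposition \ref{identification_h_quot} specialized to $p=0$, giving $P = n$ and $\Ec = k^m \otimes \opone$.

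For Part (2)(a), I would first verify that $\Psi$ is well-defined (the first two Helmke conditions on $(E,A,B,C,D,F)$ are precisely the Lomadze conditions on $(E,A,B)$) and $\pi$-equivariant with respect to $\pi: \GL_{n,n,p} \to \GL_{n,n}$, $(g_0,g_1,g_2) \mapsto (g_0,g_1)$; it thus descends to $\psi$. To exhibit the Grassmann structure, I would construct the morphism $\tH_{n,m,p}^c \to \tH_{n,m}^c \times \Grass_k(p, k^{n+m+p})$ sending $(E,A,B,C,D,F)$ to the pair $((E,A,B),\, \im [C,D,F]^\tau)$, where the third Helmke condition $\rk[F,C,D]=p$ ensures that $[C,D,F]^\tau : k^p \hookrightarrow k^n \oplus k^m \oplus k^p$ is injective. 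A direct check shows this morphism is a $\GL_p$-torsor and is $\GL_{n,n}$-equivariant for the representation $\lambda \colon (g_0, g_1) \mapsto (g_0^{-1})^\tau \oplus I \oplus I$ on $k^{n+m+p}$. Taking $\GL_{n,n,p}$-quotients on both sides and applying Proposition \ref{principal_grassmann} to the principal $\GL_{n,n}$-bundle $\tH_{n,m}^c \to H_{n,m}^c$ identifies $H_{n,m,p}^c$ with the Grassmann bundle of $p$-planes in the associated vector bundle, namely $\Ec_{n,m} \oplus \Oc^{\oplus(m+p)}$.

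For Part (2)(b), I would pull back along the principal $\GL_{n,n}$-bundle $q \colon \tH_{n,m}^c \to H_{n,m}^c \cong R$ (via Part (1) and Proposition \ref{identification_h_quot}) and use the tautological sequence on $\tH_{n,m}^c \times \pone$,
\begin{equation*}
0 \longrightarrow k^n \otimes \opone(-1) \xrightarrow{Es+At} k^n \otimes \opone \longrightarrow q^\ast \Bc \longrightarrow 0,
\end{equation*}
in which $g_0$ acts standardly on the leftmost $k^n$ (the source of $E,A$) and $g_1$ on the second. Twisting by $\opone(-1)$ and pushing forward via the projection to $\tH_{n,m}^c$, the vanishing $H^0(\opone(-1)) = H^0(\opone(-2)) = H^1(\opone(-1)) = 0$ together with cohomology and base change yields a canonical isomorphism $q^\ast \Bc_{-1} \cong k^n \otimes H^1(\opone(-2)) \otimes \Oc_{\tH_{n,m}^c}$, on which $(g_0,g_1)$ acts through $g_0$ by the standard representation. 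Dualizing, $q^\ast \Bc_{-1}^\vee$ descends via $(g_0,g_1) \mapsto (g_0^{-1})^\tau$, precisely the representation defining $\Ec_{n,m}$, giving $f^\ast \Bc_{-1}^\vee \cong \Ec_{n,m}$. The main obstacle will be the equivariance bookkeeping in this last step: carefully tracking the $\GL_{n,n}$-action through the short exact sequence from Str\o mme's construction and the pushforward, so that it matches the representation defining $\Ec_{n,m}$.
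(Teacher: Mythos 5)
Your proposal is correct and follows essentially the same route as the paper: part (1) via the $p=0$ specialization of Proposition \ref{identification_h_quot}, and part (2) via the equivariant isomorphism of $\tH_{n,m,p}^c$ with $\tH_{n,m}^c \times \bigl(\text{surjections } k^{n+m+p}\to k^p\bigr)$, a two-step quotient (first by $\GL_p$, then by $\GL_{n,n}$ after dualizing to the subspace Grassmannian), and Proposition \ref{principal_grassmann}. Your explicit verification of $f^\ast \Bc_{-1}^\vee \cong \Ec_{n,m}$ by twisting the presentation $Es+At$ of $q^\ast\Bc$, pushing forward, and tracking the $g_0$-equivariant structure on $k^n\otimes H^1(\opone(-2))$ is a welcome elaboration of a step the paper dismisses as an ``immediate consequence.''
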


\begin{proof}\ 
The first statement is Proposition \ref{identification_h_quot}. So let us prove the second statement.
We write $\Hom(k^{n+m+p}, k^p)^s \subset \Hom(k^{n+m+p}, k^p)$ for the open subset of surjective maps. Recall that the Grassmannian of $p$-quotients of $k^{n+m+p}$ $\Grass(k^{n+m+p},p)$ can be defined as the quotient of $\Hom(k^{n+m+p}, k^p)$ under the natural left action of $\GL_p$ that is given by change of basis in the target space.
Define a morphism
\[ \alpha: \tH_{n,m,p}^c \longrightarrow \tH_{n,m}^c \times \Hom(k^{n+m+p}, k^p)^s\]
by sending $(E,A,B,C,D,F)$ to the pair $\left( (E,A,B), [C,D,F] \right)$. We equip the target space with the following action of $\GL_{n,n,p}$
\begin{equation} (g_0, g_1, g_2) \left( (E,A,B), M \right) \mapsto \left( (g_1 E g_0^{-1}, g_1 A g_0^{-1}, g_1 B), g_2 M \tilde{\lambda}(g)^{-1} \right), \end{equation}
for $g=(g_0,g_1,g_2) \in \GL_{n,n,p}$, $(E,A,B) \in \tH_{n,m}^c$, $M \in \Hom(k^{n+m+p}, k^p)^s$, and 
where $\tilde{\lambda}: \GL_{n,n,p} \longrightarrow \GL_{n+m+p}, \; (g_0, g_1, g_2) \mapsto g_0 \oplus \id_{m+p}$.
The morphism $\alpha$ is a $\GL_{n,n,p}$-equivariant isomorphism.
Since on the target space the group $\GL_p \subset \GL_{n,n,p}$ acts only on $\Hom(k^{n+m+p}, k^p)^s$, we may first consider the quotient with respect to that action only: it is $\tH_{n,m}^c \times \Grass(k^{n+m+p}, p)$. Taking the quotient with respect to the induced $\GL_{n,n}$-action we obtain a $\GL_{n,n,p}$-invariant morphism
\[ \tH_{n,m,p}^c \longrightarrow \left( \tH_{n,m}^c \times \Grass(k^{n+m+p}, p) \right)// \GL_{n,n} \]
that descends to an isomorphism
\[ H_{n,m,p}^c \iso \left( \tH_{n,m}^c \times \Grass(k^{n+m+p}, p) \right)// \GL_{n,n}. \]
First notice that there is a natural isomorphism $\Grass(k^{n+m+p}, p) \cong \Grass(p, k^{n+m+p})$ that is given by dualizing the maps and identifying $k^{n+m+p}$ with its dual using the canonical basis. The induced action of $\GL_{n,n}$ on $\tH_{n,m}^c \times \Grass(p, k^{n+m+p})$ is given by
\begin{equation} g . (H, U) = (gH, {\tilde{\lambda}(g)^{-1}}^\tau (U)) \text{ for } g \in \GL_{n,n} \text{ and } (H,U) \in \tH_{n,m}^c \times \Grass(p, k^{n+m+p}).\end{equation}
The quotient with respect to this action is the fiber bundle $\tH_{n,m}^c \times_{\lambda} \Grass(k^{n+m+p}, p)$ associated with the principal $\GL_{n,n}$-bundle $\tH_{n,m}^c \longrightarrow H_{n,m}$ and the representation $\lambda: \GL_{n,m} \longrightarrow \GL_{n+m+p}, \, g \mapsto {\tilde{\lambda}(g^{-1})}^\tau$. 
We end up with an isomorphism
\[ H_{n,m,p}^c \iso \tH_{n,m}^c \times_{ \lambda} \Grass(p, k^{n+m+p}).\]

The preceeding proposition tells us that this Grassmann bundle is the Grassmann bundle of $p$-planes in the vector bundle $\tH_{n,m}^c \times_{\lambda} k^{n+m+p}$. If follows from the special form of the representation $\lambda$ that this vector bundle splits as $\Ec_{n,m} \oplus \Oc^{\oplus{(p+m)}}$, where $\Ec_{n,m} = \tH_{n,m}^c \times_{\delta} k^n$ with $\delta : \GL_{n,n} \longrightarrow \GL_n, \, (g_0, g_1) \mapsto (g_0^{-1})^\tau$. The remaining statement is an immediate consequence of the identification $H^c_{n,m} \cong \Quot_{\opone/k}^{\Ec, P}$ and of the description of the Quot scheme as a principal bundle.
\end{proof}

For the convenience of the reader, we recall the standard results on the Chow ring of Grassmann (projective) bundles.

\begin{theorem}\label{chow_bundles_grassmann}
Let $\pi: E \longrightarrow X$ be a smooth vector bundle of rank $r$.  
\begin{enumerate}  
\item Let $\Pb(\pi): \Pb(E) \longrightarrow X$ be the projective bundle associated with $E$. There is an isomorphism of  
graded rings
\[ A^\ast(\Pb(E)) \cong A^\ast(X)[T]/ \langle T^r + c_1(E)T^{r-1} + \ldots + c_r(E) \rangle,\]
induced by $T \mapsto c_1(\Oc_{E}(1))$.
In particular $A^\ast(\Pb(E))$ is a free $A^\ast(X)$-module of rank $r$.
\item Let $g: \Grass_X(d, E) \longrightarrow X$  be the Grassmann bundle of $d$-planes in $E$ and let
\[ 0 \longrightarrow \mathcal{K} \longrightarrow E_G \longrightarrow \mathcal{Q} \longrightarrow 0 \]
be the universal sequence on $G$. 
There is an isomorphism of graded rings
\[ A^\ast(\Grass_X(d,E)) \cong A^\ast(X)[X_1, \ldots, X_d, Y_1, \ldots, Y_{r-d}] \bigr/ \bigl \langle \textstyle\sum_{i=0}^{k} X_iY_{k-i} - c_k(E) \, | \, k=0,\ldots,r \bigr \rangle, \]
given by $X_i \mapsto c_i(\mathcal{K})$ and $Y_j \mapsto c_j(\mathcal{Q})$, where the polymial ring has the grading corresponding to the weight $(1, \ldots, d,1 \ldots, r-d)$, $X_0=Y_0=1$, and $X_i=Y_j=0$, $i>d,\, j>r-d$. 
In particular $A^\ast(G)$ is a free $A^\ast(X)$-module of rank ${r \choose d}$.
\end{enumerate}
\end{theorem}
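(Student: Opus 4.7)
The plan is to establish part (1) first and then deduce part (2) either by iterating projective bundles through a flag bundle, or by the splitting principle. Both reductions are classical and can be found in Fulton's \emph{Intersection Theory}; the strategy I would follow is essentially the one sketched there.

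For the projective bundle formula in (1), I would proceed by induction on the rank $r$. The base case $r=1$ is trivial because $\Pb(E) = X$. For the inductive step, I would choose an ample line bundle on $X$ (or reduce to that situation by the standard argument) in order to realize $E$ as a quotient of a direct sum, then exploit the tautological exact sequence $0 \to \Oc_E(-1) \to \pi^\ast E \to \Uc \to 0$ on $\Pb(E)$. The key geometric input is an excision/localization argument: if $E' \subset E$ is a sub-bundle of corank $1$, then $\Pb(E) \setminus \Pb(E')$ is the total space of a vector bundle over $\Pb(E/E')$, and the excision sequence together with the homotopy invariance of the Chow groups of affine bundles gives the inductive control on $A^\ast(\Pb(E))$. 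Setting $\xi := c_1(\Oc_E(1))$, one shows step by step that $1, \xi, \ldots, \xi^{r-1}$ generate $A^\ast(\Pb(E))$ freely as an $A^\ast(X)$-module. The relation $\sum_{i=0}^r c_i(E) \xi^{r-i} = 0$ then comes from the fact that the Chern classes of $E$ are characterized (via $c(\pi^\ast E) = c(\Oc_E(-1)) c(\Uc)$) by precisely this identity together with the projective bundle formula, so that the map from the quotient ring is an isomorphism.

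For part (2), I would pass through the associated flag bundle $\mathrm{Fl}_X(E) \to X$. The complete flag bundle factors as an iterated sequence of projective bundles, and applying (1) at each step produces a presentation of $A^\ast(\mathrm{Fl}_X(E))$. The Grassmann bundle $\Grass_X(d,E)$ sits naturally under the flag bundle as the base of the partial forgetful map $\mathrm{Fl}_X(E) \to \Grass_X(d,E)$, which itself factors as two iterated flag bundles (one for the sub, one for the quotient). The Chern roots of $\Kc$ and $\Qc$ become the elementary symmetric functions producing $c_i(\Kc)$ and $c_j(\Qc)$, and the relations $\sum_{i+j=k} c_i(\Kc) c_j(\Qc) = c_k(E)$ for all $k$ are forced by the Whitney sum formula applied to $0 \to \Kc \to g^\ast E \to \Qc \to 0$. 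Freeness of rank $\binom{r}{d}$ over $A^\ast(X)$ then follows either by a dimension count using Schubert cells in each fiber (again via localization) or from the corresponding statement for flag bundles plus invariance under the parabolic Weyl group $S_d \times S_{r-d}$.

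I expect the main obstacle to be the projective bundle formula itself in part (1), specifically proving that the map $A^\ast(X)[T]/(T^r + \sum c_i T^{r-i}) \to A^\ast(\Pb(E))$ is not only surjective but injective. Surjectivity is straightforward from the inductive excision argument, but injectivity requires establishing that the powers $1, \xi, \ldots, \xi^{r-1}$ are linearly independent over $A^\ast(X)$. The cleanest way to do this is to exhibit a right inverse via the Gysin pushforward $\pi_\ast \colon A^\ast(\Pb(E)) \to A^{\ast - r + 1}(X)$ and use the projection formula $\pi_\ast(\xi^{r-1} \cdot \pi^\ast \alpha) = \alpha$; once this Segre-class style identity is in hand, freeness is immediate. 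Part (2) is then essentially formal, and the rank computation $\binom{r}{d}$ can be read off either from the Schubert basis in a single fiber combined with the Leray--Hirsch principle in Chow theory, or by iterated application of part (1).
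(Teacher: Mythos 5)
Your proposal is correct and is precisely the classical argument (excision plus homotopy invariance for the projective bundle formula, Gysin pushforward for injectivity, then flag bundles and Whitney relations for the Grassmann case) that appears in the references the paper itself cites for this theorem --- the paper gives no independent proof, only pointers to Fulton's \emph{Intersection Theory} and Jouanolou's SGA~5 expos\'e, where exactly your outline is carried out. The only point I would tighten is the reduction in part (1): rather than invoking an ample line bundle (which presupposes quasi-projectivity), the standard route is Noetherian induction on $X$ to reduce to the case where $E$ is trivial, after which the corank-one subbundle needed for your excision step exists for free.
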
 

\begin{proof}
The first statement is \cite[Example 8.3.4]{mfk:94}, the second is \cite[Example 14.6.6]{mfk:94}. A proof of the second statement is given in \cite[Proposition 5.4]{jouanolou:77}.
\end{proof}

\begin{corollary}\label{H-Chow}The group underlying the Chow ring $A^\ast(H_{n,m,p}^c)$ is free abelian of rank 
\[ \rk_{\Z} A^\ast(H_{n,m,p}^c) = \binom{n+p+m}{p} \binom{n + 2m -1}{n}. \]
In the case $k=\C$, we have $A^k(H_{n,m,p}^c) \cong H^{2k}(H_{n,m,p}^c,\Z)$ and therefore
\[ \chi_{top} A^\ast(H_{n,m,p}^c) = \rk_{\Z} A^\ast(H_{n,m,p}^c). \]
\end{corollary}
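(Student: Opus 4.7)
The plan is to read off both statements directly from the Grassmann bundle description established in Theorem \ref{ppties_implicit_moduli}(2) combined with the Chow ring computation for the Quot scheme in Corollary \ref{S-Chow} and the standard Grassmann bundle formula recalled in Theorem \ref{chow_bundles_grassmann}(2).

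First I would identify the base. By Theorem \ref{ppties_implicit_moduli}(1), $H_{n,m}^c \cong L_{n,m}^c$, so Corollary \ref{S-Chow} applied with $p=0$ gives
\[ \rk_\Z A^\ast(H_{n,m}^c) = \binom{m}{0}\binom{n+2m-1}{n} = \binom{n+2m-1}{n}, \]
and moreover $A^\ast(H_{n,m}^c)$ is free abelian. For $k=\C$ the same corollary yields $A^k(H_{n,m}^c) \cong H^{2k}(H_{n,m}^c,\Z)$ and vanishing of the odd cohomology of $H_{n,m}^c$.

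Next I apply the Grassmann bundle formula. By Theorem \ref{ppties_implicit_moduli}(2), the forgetful map $\psi: H_{n,m,p}^c \longrightarrow H_{n,m}^c$ is a Grassmann bundle of $p$-planes in the rank $n+m+p$ vector bundle $\Ec_{n,m} \oplus \Oc^{\oplus(p+m)}$. Theorem \ref{chow_bundles_grassmann}(2) then gives that $A^\ast(H_{n,m,p}^c)$ is a free module over $A^\ast(H_{n,m}^c)$ of rank $\binom{n+m+p}{p}$. Combining with the rank computation of the base,
\[ \rk_\Z A^\ast(H_{n,m,p}^c) = \binom{n+m+p}{p} \cdot \binom{n+2m-1}{n}, \]
which is the claimed formula. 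Freeness as an abelian group follows because freeness is preserved under free module extensions over a free base ring.

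For the topological statement when $k=\C$, I would use the analogous cycle map / Leray--Hirsch argument: the Grassmann bundle admits a cellular decomposition over the base fibrewise (Schubert cells), so that $H^\ast(H_{n,m,p}^c,\Z)$ is a free module over $H^\ast(H_{n,m}^c,\Z)$ on the Schubert classes, and the cycle map $A^\ast \to H^{2\ast}$ is compatible with this structure. Since the cycle map is an isomorphism on the base (by Corollary \ref{S-Chow}) and both sides are free on the same generators with the same Chern-class relations, the cycle map $A^k(H_{n,m,p}^c) \to H^{2k}(H_{n,m,p}^c,\Z)$ is an isomorphism and the odd cohomology vanishes. The Euler characteristic identity $\chi_{top}A^\ast(H_{n,m,p}^c) = \rk_\Z A^\ast(H_{n,m,p}^c)$ is then immediate. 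I do not expect any substantive obstacle here: all the real content has been done in Theorem \ref{ppties_implicit_moduli} and Corollary \ref{S-Chow}; the mild point to check is just the compatibility between the algebraic and topological Leray--Hirsch structures, which is standard for Grassmann bundles.
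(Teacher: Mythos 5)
Your proposal is correct and follows exactly the paper's own route: the proof given there is precisely the combination of the Grassmann bundle description from Theorem \ref{ppties_implicit_moduli}, the base computation of Corollary \ref{S-Chow} (with $p=0$), and the Grassmann bundle formula of Theorem \ref{chow_bundles_grassmann}(2). The details you supply, including the Leray--Hirsch compatibility for the topological statement, are the standard facts the paper leaves implicit.
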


\begin{proof}This is a consequence of Corollary \ref{S-Chow} and Theorem \ref{chow_bundles_grassmann}.
\end{proof}

\begin{corollary}For $p>0$, the compactifications $L_{n,m,p}^c$ and  $H_{n,m,p}$ are not isomorphic. If $k=\C$, they are not homeomorphic.
\end{corollary}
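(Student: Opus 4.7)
The plan is to distinguish the two compactifications by a discrete invariant, namely the rank of the Chow group (or, in the complex-analytic case, the total Betti number). Both invariants have already been computed in Corollary \ref{S-Chow} and Corollary \ref{H-Chow}, so the work reduces to comparing two explicit products of binomial coefficients and observing that an isomorphism of algebraic varieties induces an isomorphism of Chow rings.

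First I would record the two values side by side:
\[ \rk_{\Z} A^\ast(L_{n,m,p}^c) = \binom{m+p}{p}\binom{n+2m-1}{n}, \qquad \rk_{\Z} A^\ast(H_{n,m,p}^c) = \binom{n+p+m}{p}\binom{n+2m-1}{n}. \]
Since the second factor $\binom{n+2m-1}{n}$ is common and strictly positive (recall $n>0$ by the standing assumption on the number of states, and $m\geq 0$ with the convention $\binom{-1}{0}=1$ covering the degenerate case), it suffices to compare $\binom{m+p}{p}$ with $\binom{n+p+m}{p}$. For $p>0$ and $n\geq 1$ the strict inequality $m+p < n+m+p$ implies
\[ \binom{m+p}{p} < \binom{n+p+m}{p}, \]
so the two Chow-group ranks differ.

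Next I would invoke the fact that an isomorphism of smooth projective varieties induces a graded ring isomorphism of Chow rings, and in particular an isomorphism of the underlying abelian groups. Since the ranks differ, no such isomorphism exists, which proves $L_{n,m,p}^c \not\cong H_{n,m,p}^c$ as algebraic varieties. For the topological half, I would use the identifications $A^k(L_{n,m,p}^c) \cong H^{2k}(L_{n,m,p}^c,\Z)$ and $A^k(H_{n,m,p}^c) \cong H^{2k}(H_{n,m,p}^c,\Z)$ (with odd cohomology vanishing, as recorded in the two corollaries); a homeomorphism would induce an isomorphism on singular cohomology, hence equality of the total ranks $\sum_k \rk H^{2k}$, contradicting the binomial inequality above.

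I do not anticipate any real obstacle: the heavy lifting sits in Corollary \ref{S-Chow} and Corollary \ref{H-Chow}, which are already in place, and the remaining step is the elementary monotonicity of $\binom{N}{p}$ in $N$ for fixed $p>0$. The only point requiring a sentence of care is the boundary case $m=0$, where one should verify directly that $\binom{p}{p}=1<\binom{n+p}{p}$ still holds for $n\geq 1$ and $p\geq 1$, so the argument goes through uniformly.
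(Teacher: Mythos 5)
Your proof is correct and is essentially the paper's own argument: the ranks of the Chow groups are read off from Corollary \ref{S-Chow} and Corollary \ref{H-Chow}, and the strict monotonicity of $\binom{N}{p}$ in $N$ for fixed $p>0$ does the rest, with the homeomorphism statement following from $A^k \cong H^{2k}$ and the vanishing of odd cohomology. The one caveat is your handling of $m=0$: there the common factor is $\binom{n+2m-1}{n}=\binom{n-1}{n}=0$ (your convention $\binom{-1}{0}=1$ does not apply since the lower index is $n>0$), so the comparison degenerates --- but in that case both moduli spaces are empty and the statement is implicitly restricted to $m\geq 1$, where your argument is complete.
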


\begin{proof}The Chow group (Euler-Poincar\'e Characteristic) is an algebraic (topological) invariant. The statement is therefore a consequence of Corollary \ref{S-Chow} and Corollary \ref{H-Chow}.
\end{proof}

\begin{corollary}\label{chow_ring_standard}There are canonical isomorphisms of graded rings 
\begin{enumerate}
\item $A^\ast (H_{n,m,p}^c) \cong A^\ast(H_{n,m}^c)[X_1, \ldots, X_p, Y_1, \ldots, Y_{n+m}]/ I$, with \\
$I =  \langle \sum_{i=0}^j X_i Y_{j-i} + (-1)^{j+1} c_j (\Bc_{-1}) \vbar j =0, \ldots,n+m+p \rangle$, and $X_0=Y_0=1$, $X_i=Y_j=0$ for $i>p$ and $j>n+m$.
\item $A^\ast (H_{n,m,1}^c) \cong A^\ast(H_{n,m}^c)[T] / I$, with \\
$ I = \langle T^{n+m+1} - c_1( \Bc_{-1})T^{n+m} + c_2(\Bc_{-1})T^{n+m-1} + \ldots + (-1)^n c_n( \Bc_{-1})T^{m} \rangle$.
\end{enumerate}
\end{corollary}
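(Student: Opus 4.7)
The strategy is to apply the Chern class machinery of Theorem \ref{chow_bundles_grassmann} to the Grassmann bundle structure exhibited in Theorem \ref{ppties_implicit_moduli}. Recall that $\psi: H_{n,m,p}^c \longrightarrow H_{n,m}^c$ realizes $H_{n,m,p}^c$ as the Grassmann bundle of $p$-planes in the vector bundle $E:=\Ec_{n,m} \oplus \Oc^{\oplus(m+p)}$ of rank $n+m+p$, and that via the identification $f: H_{n,m}^c \iso \Quot_{\pone/k}^{P,\Ec}$ we have $\Ec_{n,m} \cong f^\ast \Bc_{-1}^\vee$. Both parts will follow once the Chern classes of $E$ are expressed in terms of those of $\Bc_{-1}$.

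First I would compute the total Chern class of $E$. By the Whitney product formula and the vanishing of Chern classes of trivial summands, $c(E) = c(\Ec_{n,m}) \cdot c(\Oc^{\oplus(m+p)}) = c(\Ec_{n,m})$. The dualization identity $c_k(F^\vee) = (-1)^k c_k(F)$ applied to $\Ec_{n,m} \cong f^\ast \Bc_{-1}^\vee$ then gives
\[ c_k(E) = (-1)^k c_k(\Bc_{-1}) \in A^\ast(H_{n,m}^c), \]
where I suppress $f^\ast$ throughout, using the canonical identification.

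For part (1), I would substitute this into the Grassmann bundle presentation of Theorem \ref{chow_bundles_grassmann}(2) with $d=p$ and $r=n+m+p$ (so $r-d=n+m$). Writing $X_i = c_i(\Kc)$ and $Y_j = c_j(\Qc)$ for the Chern classes of the universal sub- and quotient bundles on $H_{n,m,p}^c$, the defining relations $\sum_{i=0}^k X_i Y_{k-i} = c_k(E)$ become $\sum_{i=0}^k X_i Y_{k-i} + (-1)^{k+1} c_k(\Bc_{-1}) = 0$ for $k=0,\ldots,n+m+p$, which is exactly the claimed presentation. For part (2), specializing to $p=1$ turns the Grassmann bundle into a projective bundle, so I would apply Theorem \ref{chow_bundles_grassmann}(1) with $r=n+m+1$; substituting $c_k(E)=(-1)^k c_k(\Bc_{-1})$ produces the alternating polynomial relation, and the truncation at the term involving $c_n(\Bc_{-1})$ is forced by the fact that $\Bc_{-1}$ has rank $n$, so $c_k(\Bc_{-1}) = 0$ for $k>n$.

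I expect the only real subtlety to be sign and convention bookkeeping: tracking the dualization $\Ec_{n,m} \cong f^\ast \Bc_{-1}^\vee$ consistently through Whitney's formula, and, in part (2), matching the convention under which Theorem \ref{chow_bundles_grassmann}(1) presents the projective bundle (1-quotients versus lines in $E$) with the Grassmann bundle of $1$-planes supplied by Theorem \ref{ppties_implicit_moduli}. Once these are nailed down, both statements are direct substitutions into standard formulas.
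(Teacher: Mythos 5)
Your proposal is correct and follows exactly the paper's (very terse) argument: apply Theorem \ref{chow_bundles_grassmann} to the Grassmann bundle structure of Theorem \ref{ppties_implicit_moduli}, using $c_k(E)=(-1)^k c_k(\Bc_{-1})$ from the Whitney formula and the dualization $\Ec_{n,m}\cong f^\ast\Bc_{-1}^\vee$. Note that your computation gives $(-1)^n c_n(\Bc_{-1})T^{m+1}$ as the final term in part (2) (since $c_k$ pairs with $T^{n+m+1-k}$), which is the correct exponent; the $T^m$ in the stated corollary appears to be an off-by-one typo.
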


\comment{
\begin{proof}This follows from Lemma \ref{ppties_implicit_moduli} and Theorem \ref{chow_bundles_grassmann}.
\end{proof}
}

\begin{example}[Single Input Systems]
We have already seen in Example \ref{h_single_input} that   
\[ H_{n,1}^c = L_{n,1}^c \cong \proj{n}.\]
By the same example we know that $\Bc_{-1} = H^0\bigl( \pone, \opone(n-1)\bigr)^\vee \otimes \Oc_{\proj{n}}(-1)$. In particular $c_t( \Bc_{-1}) = (1-h)^n$, where $h:= c_1 ( \opone(1))$.
Hence:  
\begin{equation*}\begin{split}
 A^\ast(H_{n,1,p}^c) \cong A^\ast( \proj{n})[X_1, \ldots, X_p, Y_1, \ldots, Y_{n+1}]/ I, \text{ where }\\
I =  \langle \textstyle\sum_{i=0}^j X_i Y_{j-i} + (-1)^j c_j(\Bc_{-1}) \vbar j=0, \ldots, n+p\rangle.
\end{split}\end{equation*}
For example:
\begin{itemize}  
\item $A^\ast(H_{1,1,2}^c) \cong \Z[h,X_1, X_2,Y_1,Y_2] / I$, where  \\
$ I = \langle h^2, \; Y_1 + X_1 +h, \; Y_2 + X_1Y_1 + X_2, \; X_1 Y_2 + X_2 Y_1 \rangle$;
\item $A^\ast(H_{1,1,1}^c) \cong \Z[h,T] / \langle h^{2}, T^{3} + h T^2 \rangle$. 
\end{itemize}
In particular, we see that the Grassmann bundle $H_{n,m,p}^c \longrightarrow H_{n,m}^c$ is in general non trivial.
\end{example}

%\bibliographystyle{abbrv}
%\addcontentsline{toc}{section}{\protect\numberline{}{ Bibliography}}
%\addcontentsline{toc}{section}{\protect\numberline{4}{Bibliography}}


\begin{thebibliography}{10}

\bibitem{auslander-reiten}
M.~Auslander, I.~Reiten, and S.~O.
\newblock {\em Representation theory of {A}rtin algebras}, volume~36 of {\em
  Cambridge Studies in Advanced Mathematics}.
\newblock Cambridge University Press, Cambridge, 1997.
\newblock Corrected reprint of the 1995 original.

\bibitem{diplomarbeit}
M.~Bader.
\newblock Moduli spaces of linear dynamical systems.
\newblock Diplomarbeit, Universit\"at Z\"urich, 2005.
\newblock Available at http://www.math.unizh.ch/$\sim$markus.

\bibitem{bh:79}
C.~I. Byrnes and N.~E. Hurt.
\newblock On the moduli of linear dynamical systems.
\newblock In {\em Studies in analysis}, volume~4 of {\em Adv. in Math. Suppl.
  Stud.}, pages 83--122. Academic Press, New York, 1979.

\bibitem{corfra:03}
M.~J. Corless and A.~E. Frazho.
\newblock {\em {Linear systems and control: An operator perspective}}.
\newblock Marcel Dekker, New York, 2003.

\bibitem{boevey:2001}
W.~Crawley-Boevey.
\newblock Geometry of the moment map for representations of quivers.
\newblock {\em Compositio Mathematica}, 126(3):257--293, 2001.

\bibitem{gantmacher:86}
F.~R. Gantmacher.
\newblock {\em Matrizentheorie}.
\newblock Springer-Verlag, Berlin, 1986.
\newblock With an appendix by V. B. Lidskij, With a preface by D. P. \v
  Zelobenko, Translated from the second Russian edition by Helmut Boseck,
  Dietmar Soyka and Klaus Stengert.

\bibitem{geiss:06}
C.~Gei{\ss}.
\newblock Introduction to moduli spaces associated to quivers (with an appendix
  by {L}ieven {L}e {B}ruyn and {M}arkus {R}eineke).
\newblock In {\em Trends in representation theory of algebras and related
  topics}, volume 406 of {\em Contemp. Math.}, pages 31--50. Amer. Math. Soc.,
  Providence, RI, 2006.
\newblock With an appendix by Lieven Le Bruyn and Markus Reineke.

\bibitem{halic:04}
M.~Halic and M.-S. Stupariu.
\newblock Rings of invariants for representations of quivers.
\newblock {\em C. R. Math. Acad. Sci. Paris}, 340(2):135--140, 2005.

\bibitem{har:77}
R.~Hartshorne.
\newblock {\em Algebraic geometry}.
\newblock Springer-Verlag, New York, 1977.
\newblock Graduate Texts in Mathematics, No. 52.

\bibitem{hazewinkel:79}
M.~Hazewinkel.
\newblock ({F}ine) moduli (spaces) for linear systems: what are they and what
  are they good for?
\newblock In {\em Geometrical methods for the study of linear systems (Proc.
  NATO Adv. Study Inst., Harvard Univ., Cambridge, Mass., 1979)}, volume~62 of
  {\em NATO Adv. Study Inst. Ser., Ser. C: Math. Phys. Sci.}, pages 125--193.
  Reidel, Dordrecht, 1980.

\bibitem{helmke:92}
U.~Helmke.
\newblock {\em The cohomology of moduli spaces of linear dynamical systems},
  volume~24 of {\em Regensburger Mathematische Schriften [Regensburg
  Mathematical Publications]}.
\newblock Universit\"at Regensburg Fachbereich Mathematik, Regensburg, 1993.

\bibitem{helmke:93}
U.~Helmke.
\newblock A compactification of the space of rational transfer functions by
  singular systems.
\newblock {\em J. Math. Systems Estim. Control}, 3(4):459--472, 1993.

\bibitem{moore:94}
U.~Helmke and J.~B. Moore.
\newblock {\em Optimization and dynamical systems}.
\newblock Communications and Control Engineering Series. Springer-Verlag London
  Ltd., London, 1994.
\newblock With a foreword by R. Brockett.

\bibitem{HRS:97}
U.~Helmke, J.~Rosenthal, and J.~M. Schumacher.
\newblock A controllability test for general first-order representations.
\newblock {\em Automatica J. IFAC}, 33(2):193--201, 1997.

\bibitem{hl:97}
D.~Huybrechts and M.~Lehn.
\newblock {\em The geometry of moduli spaces of sheaves}.
\newblock Aspects of Mathematics, E31. Friedr. Vieweg \& Sohn, Braunschweig,
  1997.

\bibitem{jouanolou:77}
J.-P. Jouanolou.
\newblock {Cohomologie de quelques sch\'emas classiques et th\'eorie
  cohomologique des classes de Chern}.
\newblock In A.~Grothendieck, editor, {\em S\'eminaire de G\'eometrie
  Alg\'ebrique du Bois-Marie 1965-66 - SGA 5}, pages 282--309. Springer,
  Lecture Notes in Mathematics, vol. 589, Heidelberg, 1977.

\bibitem{kalman:74}
R.~E. Kalman.
\newblock Algebraic geometric description of the class of linear systems of
  constant dimension.
\newblock {\em Proceedings of the Annual Princeton conference on information
  sciences and systems}, 8:189--192, 1974.

\bibitem{king:94}
A.~D. King.
\newblock Moduli of representations of finite-dimensional algebras.
\newblock {\em Quart. J. Math. Oxford Ser. (2)}, 45(180):515--530, 1994.

\bibitem{kuijper:94}
M.~Kuijper.
\newblock {\em First-order representations of linear systems}.
\newblock Systems \& Control: Foundations \& Applications. Birkh\"auser Boston
  Inc., Boston, MA, 1994.

\bibitem{lebruyn:90}
L.~Le~Bruyn and C.~Procesi.
\newblock Semisimple representations of quivers.
\newblock {\em Trans. Amer. Math. Soc.}, 317(2):585--598, 1990.

\bibitem{pot:97}
J.~Le~Potier.
\newblock {\em Lectures on vector bundles}, volume~54 of {\em Cambridge Studies
  in Advanced Mathematics}.
\newblock Cambridge University Press, Cambridge, 1997.
\newblock Translated by A. Maciocia.

\bibitem{lomadze:90}
V.~G. Lomadze.
\newblock Finite-dimensional time-invariant linear dynamical systems: algebraic
  theory.
\newblock {\em Acta Appl. Math.}, 19(2):149--201, 1990.

\bibitem{mfk:94}
D.~Mumford, J.~Fogarty, and F.~Kirwan.
\newblock {\em Geometric invariant theory}, volume~34 of {\em Ergebnisse der
  Mathematik und ihrer Grenzgebiete (2) [Results in Mathematics and Related
  Areas (2)]}.
\newblock Springer-Verlag, Berlin, third edition, 1994.

\bibitem{nakajima:96}
H.~Nakajima.
\newblock Varieties associated with quivers.
\newblock In {\em Representation theory of algebras and related topics (Mexico
  City, 1994)}, volume~19 of {\em CMS Conf. Proc.}, pages 139--157. Amer. Math.
  Soc., Providence, RI, 1996.

\bibitem{okonek:04}
C.~Okonek and A.~Teleman.
\newblock Gauge theoretical {G}romov-{W}itten invariants and virtual
  fundamental classes.
\newblock In {\em The Fano Conference}, pages 591--623. Univ. Torino, Turin,
  2004.

\bibitem{rosenthal:94}
M.~S. Ravi and J.~Rosenthal.
\newblock A smooth compactification of the space of transfer functions with
  fixed {M}c{M}illan degree.
\newblock {\em Acta Appl. Math.}, 34(3):329--352, 1994.

\bibitem{rosenthal:95}
M.~S. Ravi and J.~Rosenthal.
\newblock A general realization theory for higher-order linear differential
  equations.
\newblock {\em Systems Control Lett.}, 25(5):351--360, 1995.

\bibitem{reid:88}
M.~Reid.
\newblock {\em Undergraduate algebraic geometry}, volume~12 of {\em London
  Mathematical Society Student Texts}.
\newblock Cambridge University Press, Cambridge, 1988.

\bibitem{ringel}
C.~M. Ringel.
\newblock {\em Tame algebras and integral quadratic forms}, volume 1099 of {\em
  Lecture Notes in Mathematics}.
\newblock Springer-Verlag, Berlin, 1984.

\bibitem{rosenthal:92}
J.~Rosenthal.
\newblock A compactification of the space of multivariable linear systems using
  geometric invariant theory.
\newblock {\em J. Math. Systems Estim. Control}, 2(1):111--121, 1992.

\bibitem{serre:58}
J.-P. Serre.
\newblock {Espaces fibr\'es alg\'ebriques}.
\newblock In C.~Chevalley, editor, {\em S\'eminaire Chevalley 1958, Anneaux de
  Chow et applications}, pages 1.01 -- 1.37. Ecole Normale Sup\'erieure, Paris,
  1958.

\bibitem{stromme:87}
S.~A. Str{\o}mme.
\newblock On parametrized rational curves in {G}rassmann varieties.
\newblock In {\em Space curves (Rocca di Papa, 1985)}, volume 1266 of {\em
  Lecture Notes in Math.}, pages 251--272. Springer, Berlin, 1987.

\bibitem{tannenbaum:91}
A.~Tannenbaum.
\newblock Invariant theory and families of dynamical systems.
\newblock In {\em Mathematical system theory}, pages 327--342. Springer,
  Berlin, 1991.

\bibitem{willems1}
J.~C. Willems.
\newblock From time series to linear system. {I}. {F}inite-dimensional linear
  time invariant systems.
\newblock {\em Automatica J. IFAC}, 22(5):561--580, 1986.

\bibitem{willems2}
J.~C. Willems.
\newblock From time series to linear system. {II}. {E}xact modelling.
\newblock {\em Automatica J. IFAC}, 22(6):675--694, 1986.

\bibitem{willems3}
J.~C. Willems.
\newblock From time series to linear system. {III}. {A}pproximate modelling.
\newblock {\em Automatica J. IFAC}, 23(1):87--115, 1987.

\end{thebibliography}
\end{document}